\documentclass[11pt,reqno,xcolor=svgnames]{amsart}

\usepackage{amsmath, amsthm, amssymb, amsfonts, verbatim,tikz}

\usepackage{xspace,mathtools,bm}

\usepackage{enumitem}

\usepackage[svgnames]{xcolor}

\usepackage{cite}

\usepackage[pagebackref=true, colorlinks=true, citecolor=blue]{hyperref}

\usepackage{cleveref}
\usepackage{marginnote}
\usepackage{datetime}

\def\0{{\bf 0}}

\def\R{{\mathbb R}}
\def\Z{{\mathbb Z}}

\def\N{{\mathbb N}}

\theoremstyle{plain}

\newtheorem{theorem}{Theorem}[section]
\newtheorem{cor}[theorem]{Corollary}
\newtheorem{prop}[theorem]{Proposition}
\newtheorem{lem}[theorem]{Lemma}
\newtheorem{lemma}[theorem]{Lemma}

\theoremstyle{definition}
\newtheorem{definition}[theorem]{Definition}

\newtheorem*{theorem*}{Theorem}

\theoremstyle{plain}
\newtheorem{remark}[theorem]{Remark}

\newtheorem*{remark*}{Remark}

\theoremstyle{remark}

\usepackage{scalerel,stackengine}
\stackMath
\newcommand\reallywidehat[1]{%
\savestack{\tmpbox}{\stretchto{%
  \scaleto{%
    \scalerel*[\widthof{\ensuremath{#1}}]{\kern-.6pt\bigwedge\kern-.6pt}%
    {\rule[-\textheight/2]{1ex}{\textheight}}
  }{\textheight}%
}{0.5ex}}%
\stackon[1pt]{#1}{\tmpbox}%
}

\usepackage{tcolorbox}
\usepackage{tikz}
\tcbuselibrary{skins}
\usetikzlibrary{shadings}

\tcbset{
    myimage/.style={
        enhanced,
        overlay={
            \begin{scope}[shift={([xshift=1mm, yshift=7mm]frame.north west)}]
            \end{scope}}}}

\tcbset{
    skin=enhanced,
    fonttitle=\bfseries,
    interior style={white},
    segmentation style={black,solid,opacity=0.2,line width=1pt}}

\newtcolorbox{TitledBox}[2][]{
    myimage,              
    coltitle=black,       
    colbacktitle=white,   
    title=My title,
    attach boxed title to top center={
        yshift=-3mm,
        yshifttext=-1mm},
    attach boxed title to top left={
        xshift=1cm,
        yshift=-2mm},
    boxed title style={
        size=small},
    title={#2},#1}

\newcommand{\abs}[1]{\left\vert#1\right\vert}
\newcommand{\smallabs}[1]{\ensuremath{\vert #1 \vert}}

\DeclareMathOperator{\dv}{div}

\DeclareMathOperator{\supp}{supp}
\newcommand{\iny}{\ensuremath{\infty}}
\newcommand{\grad}{\ensuremath{\nabla}}
\newcommand{\prt}{\ensuremath{\partial}}

\newcommand{\brac}[1]{\ensuremath{\left[ #1 \right]}}
\newcommand{\pr}[1]{\ensuremath{\left( #1 \right) }}

\newcommand{\norm}[1]{\ensuremath{\left\Vert #1 \right\Vert}}

\newcommand{\smallnorm}[1]{\ensuremath{\Vert #1 \Vert}}
\newcommand{\Cal}[1]{\ensuremath{\mathcal{#1}}}
\newcommand{\wh}{\widehat}

\newcommand{\al}{\alpha}
\newcommand{\esslim}{\mathop{\mathrm{ess\,lim}}}

\newenvironment{Indent}
  {%
  \begin{list}{}%
    {\setlength{\leftmargin}{3em}%
     \setlength{\rightmargin}{0pt}}%
   \item\relax}
  {\end{list}}

\newcommand{\wstar}{\overset{*}{\rightharpoonup}}

\allowdisplaybreaks

\crefname{cor}{Corollary}{Corollaries} 
									   
\crefname{lemma}{Lemma}{Lemmas}	       

\crefname{section}{Section}{Sections}
\Crefname{section}{Section}{Sections}

\crefname{appendix}{Appendix}{Appendices}
\Crefname{appendix}{Appendix}{Appendices}

\crefname{theorem}{Theorem}{Theorems}
\Crefname{theorem}{Theorem}{Theorems}

\crefname{prop}{Proposition}{Propositions}
\Crefname{prop}{Proposition}{Propositions}

\crefname{conj}{Conjecture}{Conjectures}
\Crefname{conj}{Conjecture}{Conjectures}

\crefname{definition}{Definition}{Definitions}
\Crefname{definition}{Definition}{Definitions}

\crefname{remark}{Remark}{Remarks}
\Crefname{remark}{Remark}{Remarks}

\crefname{assumption}{Assumption}{Assumptions}
\Crefname{assumption}{Assumption}{Assumptions}

\creflabelformat{enumi}{(#1)}
\creflabelformat{enumii}{(#1)}

\crefformat{equation}{(#2#1#3)}
\crefrangeformat{equation}{(#3#1#4) through (#5#2#6)}
\crefmultiformat{equation}
    {(#2#1#3)}%
    { and~(#2#1#3)}
    {, (#2#1#3)}
    { and~(#2#1#3)}

\newcommand{\NoteToSelf}[1]{
    }

\newcommand{\Ignore}[1]{}
\newcommand{\stardot}{{\mathop{* \cdot}}}

\renewcommand{\epsilon}{\varepsilon}
\newcommand{\eps}{\ensuremath{\varepsilon}}

\newcommand{\FTF}
    {\Cal{F}}

\newcommand{\FTR}
    {\Cal{F}^{-1}}
    
\newcommand{\ol}{\overline}
\DeclareMathOperator{\Id}{Id} %
\newcommand{\Del}{\dot{\Delta}}
\newcommand{\ox}{{\ol{x}}}
\newcommand{\otheta}{\ol{\theta}}
\newcommand{\ou}{\ol{u}}

\newbool{HaveBBM}
\booltrue{HaveBBM}

\ifbool{HaveBBM}{
	\usepackage{bbm}
    }
    {
    }

\newcommand{\CharFunc}{
    \ifbool{HaveBBM}{
        \ensuremath{\mathbbm{1}}
        }
        {
        \ensuremath{\bm{1}}
        }
    }

\begin{document}
\newdateformat{mydate}{\THEDAY~\monthname~\THEYEAR}

\raggedbottom

\numberwithin{equation}{section}

\title
    [Weak Solutions to SQG]
    {Non-decaying Weak Solutions to the 2D Quasi-geostrophic Equations}

\author{David M. Ambrose}
\address{Department of Mathematics, Drexel University}
\curraddr{}
\email{dma68@drexel.edu}

\author{Ryan Aschoff}
\address{Software Development, Google}
\curraddr{}
\email{raschoff@google.com}

\author{Elaine Cozzi}
\address{Department of Mathematics, Oregon State University}
\curraddr{}
\email{cozzie@math.oregonstate.edu}

\author{James P. Kelliher}
\address{Department of Mathematics, University of California, Riverside}
\curraddr{}
\email{kelliher@math.ucr.edu}

\subjclass{Primary 76D05, 76C99} 
\date{} 


\keywords{Fluid mechanics, SQG}

\begin{abstract}
We investigate weak solutions to the two-dimensional quasi-geostrophic equations without dissipation.  We establish global existence of weak solutions for temperature bounded and lacking spatial decay and velocity in the space $L^2_{ul}(\mathbb{R}^2)$.  Our methods rely on a spectral Serfati identity, which we use to establish uniform $L^2_{ul}$ bounds on a sequence of velocities satisfying the dissipative equations.  These bounds, combined with a maximum principle on the scalar temperature, allow us to pass to the zero-dissipation limit, giving global-in-time weak solutions.            
\end{abstract}

\maketitle

\tableofcontents

\section{Introduction}

    \noindent In \cite{AACK25}, the current authors established the well-posedness of non-decaying regular viscous solutions to the SQG equations in the plane with fractional subcritical diffusion, which includes the following case of classical diffusion (see \cref{T:SQG1} below):
    \begin{align}\label{e:SQGnu}
        \tag{$SQG_\nu$}
        \begin{cases}
            \prt_t \theta + u\cdot\grad\theta = \nu \Delta\theta
                &\text{in }[0,T] \times \R^2,\\
            \Del_j u = (\Del_j K) *\theta
                &\text{in } [0,T]\times \R^2, \, \forall\,j\in\Z,\\
            \theta|_{t = 0} = \theta_0, \, u|_{t = 0} = u_0
                &\text{in }\R^2.
        \end{cases}
    \end{align}
    The scalar function $\theta$, often associated with the temperature of a fluid, is transported by the velocity field $u$. $K$ is the kernel for $\grad^\perp (-\Delta)^{-\frac{1}{2}}$, formally a Riesz operator. The Littlewood-Paley operator $\Del_j$ isolates a specific range of frequencies, and $\Del_j u = (\Del_j K) *\theta$ for all $j$ is a form of the constitutive law, which we refer to as the \textbf{homogeneous constitutive law}, appropriate for non-decaying data. For $\theta$ having sufficient decay, say $\theta \in L^2$, the homogeneous constitutive law follows from the now classical form of the constitutive law, $u = K * \theta = \grad^\perp (-\Delta)^{-\frac{1}{2}} \theta$.

    Here, we leverage this result from \cite{AACK25} to obtain, via a vanishing viscosity argument, the existence, for non-decaying data, of weak solutions to the inviscid SQG equations,
    \begin{align}\label{e:SQG0}
        \tag{$SQG_0$}
        \begin{cases}
            \prt_t \theta + u\cdot\grad\theta = 0
                &\text{in }[0,T] \times \R^2,\\
            \Del_j u = (\Del_j K) *\theta
                &\text{in } [0,T]\times \R^2, \, \forall\,j\in\Z,\\
            \theta|_{t = 0} = \theta_0, \, u|_{t = 0} = u_0
                &\text{in }\R^2.
        \end{cases}
    \end{align}
    This leads to our main result, \cref{T:MainResult}, which we prove in \cref{S:Existence}. The equivalent for decaying data was first proved by Resnick in his PhD thesis \cite{Resnick}, and later, using a different proof, by Marchand in \cite{Marchand}. Our approach is closer to that of Marchand.

    \begin{theorem}\label{T:MainResult}
    Assume the pair $( \theta_0,u_0)$ belongs to $ L^{\infty}(\R^2)\times L^2_{ul}(\R^2)$ and satisfies $\dot{\Delta}_j u_0 = (\dot{\Delta}_jK)\ast \theta_0 \text{ for all }j\in\Z$, div $u_0=0$.  Then for every $T>0$, there exists a non-unique weak solution $(\theta,u)$ to \cref{e:SQG0} on $[0,T]$ with initial data $( \theta_0,u_0)$. %
    Furthermore, the following Serfati identity (discussed below) holds:
    \begin{equation}\label{e:SerfatiWeak}
        \begin{split}
            u(t, x) - u_0(x) 
                &= -\int_0^t \left(S_{N} \nabla K \stardot (\theta u) \right)(s,x) \, ds
                + (H_NK) \ast (\theta(t) - \theta_0)(x).
        \end{split}
    \end{equation}
    \end{theorem}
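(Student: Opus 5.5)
The plan is a vanishing viscosity argument built on the viscous well-posedness result \cref{T:SQG1} from \cite{AACK25}. First I regularize the data. I mollify $\theta_0$ to get $\theta_0^\nu = \rho_\nu * \theta_0$ and set $u_0^\nu = \rho_\nu * u_0$. Convolution commutes with $\Del_j$, so the homogeneous constitutive law and $\dv u_0^\nu = 0$ are preserved. We also have $\|\theta_0^\nu\|_{L^\infty}\le\|\theta_0\|_{L^\infty}$ and $\|u_0^\nu\|_{L^2_{ul}}\le C\|u_0\|_{L^2_{ul}}$. Applying \cref{T:SQG1} gives global smooth solutions $(\theta^\nu,u^\nu)$ of \cref{e:SQGnu}. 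Since $u^\nu$ is divergence free, the maximum principle gives $\|\theta^\nu(t)\|_{L^\infty}\le\|\theta_0\|_{L^\infty}$ uniformly in $\nu$ and $t$.

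The central step is a uniform bound on $u^\nu$ in $L^\infty(0,T;L^2_{ul})$. For this I use the viscous spectral Serfati identity. Write $\Id = S_N + H_N$, with $S_N$ the low-frequency projection and $H_N$ the high-frequency one. Apply $S_N$ to $\prt_t u^\nu = K*\prt_t\theta^\nu$ and $H_N$ to $u^\nu$ directly. Using $u\cdot\grad\theta = \dv(\theta u)$, this yields
\[
u^\nu(t)-u_0^\nu = -\int_0^t S_N\nabla K\stardot(\theta^\nu u^\nu)\,ds + (H_NK)*(\theta^\nu(t)-\theta^\nu_0) + \nu\int_0^t S_N\Delta K*\theta^\nu\,ds .
\]
I estimate each term separately.
\begin{itemize}
\item The kernel $S_N\nabla K$ is Schwartz-like, with decay of order $2^{3N}|x|^{-3}$. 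Hence $\|S_N\nabla K\stardot f\|_{L^2_{ul}}\le C2^N\|f\|_{L^2_{ul}}$, which bounds the first term by $C2^N\|\theta_0\|_{L^\infty}\int_0^t\|u^\nu\|_{L^2_{ul}}ds$.
\item The high-frequency kernel $H_NK$ is integrable, with $\|H_NK\|_{L^1}\le C2^{-N}$. So the second term is bounded in $L^\infty$, hence in $L^2_{ul}$, by $C2^{-N}\|\theta_0\|_{L^\infty}$.
\item The viscous term is bounded by $C\nu t2^{2N}\|\theta_0\|_\infty$, using that $S_N\Delta K\in L^1$.
\end{itemize}
Fixing $N=0$, Gronwall's inequality gives $\|u^\nu(t)\|_{L^2_{ul}}\le C(T,\|u_0\|_{L^2_{ul}},\|\theta_0\|_{L^\infty})$ uniformly for $\nu\le1$. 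This is where the homogeneous constitutive law pays off: it is what makes the identity valid without decay.

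Next comes compactness. The bounds give $\theta^\nu\wstar\theta$ in $L^\infty([0,T]\times\R^2)$, and $u^\nu\rightharpoonup u$ weakly in $L^2(0,T;L^2(B_R))$ for every ball $B_R$. The nonlinear term $\theta^\nu u^\nu$ needs strong convergence of one factor. I split $u^\nu = S_M u^\nu + H_M u^\nu$.
\begin{itemize}
\item The low part $S_Mu^\nu$ is smooth in space, with all derivatives bounded locally in terms of $\|u^\nu\|_{L^2_{ul}}$. Its time derivative is controlled through the Serfati identity, since $\prt_t S_Mu^\nu = -S_M\nabla K\stardot(\theta^\nu u^\nu)+\nu\Delta S_Mu^\nu$ is uniformly bounded. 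Arzel\`a--Ascoli then gives local uniform convergence along a subsequence.
\item The high part is $H_Mu^\nu=(H_MK)*\theta^\nu$, which has sup norm at most $C2^{-M}\|\theta_0\|_\infty$ and so is small uniformly in $\nu$.
\end{itemize}
A diagonal argument in $M$ and $R$ then gives $\int\theta^\nu u^\nu\cdot\grad\varphi\to\int\theta u\cdot\grad\varphi$ for test functions $\varphi$. Hence $(\theta,u)$ is a weak solution of the transport equation. Linearity passes the constitutive law $\Del_ju=(\Del_jK)*\theta$ and $\dv u=0$ to the limit, because $\Del_jK\in L^1$ and weak-$*$ convergence suffices.

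Finally, I pass to the limit in the Serfati identity. Its linear terms converge weakly. The nonlinear term converges by the same strong/weak pairing, because $S_N\nabla K$ is integrable with tails that can be truncated using the uniform $L^2_{ul}$ bound. The viscous term vanishes as $\nu\to0$. This gives \cref{e:SerfatiWeak} for a.e.\ $t$, and for all $t$ after choosing the weakly continuous representative.

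I expect the main obstacle to be the compactness of the nonlinear term without spatial decay. It needs uniform-in-$\nu$ time regularity and the control of tails in the convolution $S_N\nabla K\stardot(\theta u)$ over all of $\R^2$. Both have to be handled with $L^2_{ul}$ bounds and the kernel decay rather than with global integrability.
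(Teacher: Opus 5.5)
\paragraph{The gap: the high-frequency part of the nonlinear term.} You assert $\norm{(H_MK)*\theta^\nu}_{L^\iny}\le C2^{-M}\norm{\theta_0}_{L^\iny}$, and in the a priori estimate $\norm{H_NK}_{L^1}\le C2^{-N}$. Both are false. Since $\Del_jK(x)=2^{2j}(\Del_0K)(2^jx)$, we have $\smallnorm{\Del_jK}_{L^1}=\smallnorm{\Del_0K}_{L^1}$ for every $j$, so the dyadic pieces do not shrink.
\begin{itemize}
\item $H_MK$ keeps the non-integrable $\abs{x}^{-2}$ singularity of $K$ at the origin.
\item $(H_MK)*\theta$ is a Riesz-type singular integral of a bounded function, so it need not even be bounded. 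For the indicator of a half-plane it is logarithmically singular along the edge.
\item More fundamentally, $H_Mu^\nu$ is comparable in size to the high-frequency part of $\theta^\nu$, on which you have only an $L^\iny$ bound. Nothing makes it small or precompact uniformly in $\nu$.
\end{itemize}
So no strong/weak pairing can handle $\int\theta^\nu\grad\phi\cdot H_Mu^\nu$. This high--high interaction is the whole difficulty for weak SQG solutions. The same gap affects your passage to the limit in the Serfati identity, which needs $\theta^\nu u^\nu\to\theta u$ in $\mathcal{D}'$.

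\paragraph{The missing idea.} The \emph{bilinear form} is small even though $H_Mu^\nu$ is not. Because $\Del_jK$ is odd, symmetrizing in $x,y$ gives, block by block,
\begin{align*}
    \int_{\R^2} f\,\grad\phi\cdot\bigl((\Del_jK)*f\bigr)\,dx
        =\frac12\int_{\R^2}\int_{\R^2}(\Del_jK)(x-y)\cdot\bigl(\grad\phi(x)-\grad\phi(y)\bigr)f(x)f(y)\,dy\,dx .
\end{align*}
Three facts bound the right side by $C_\phi 2^{-j}\norm{f}_{L^\iny}^2$:
\begin{itemize}
\item $\abs{\grad\phi(x)-\grad\phi(y)}\le C\abs{x-y}$;
\item $\smallnorm{\abs{x}\Del_jK}_{L^1}\le C2^{-j}$, from \cref{L:DelKxBound};
\item $\abs{\Del_jK(z)}\le C2^{-j}\abs{z}^{-3}$ for $\abs{z}\ge1$, $j\ge0$, used when one point is far from $\supp\grad\phi$.
\end{itemize}
Summing over $j\ge M$ gives $O(2^{-M})$ uniformly in $\nu$, both for $f=\theta^\nu$ and for $f=\theta$. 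This is \cref{L:ffInt}, the analogue of Marchand's Lemma 2.1 and Resnick's (2.11). With it, your split at frequency $M$ goes through, and your Arzel\`a--Ascoli argument for $S_Mu^\nu$ handles the low part. The paper applies the same identity inside the four-term decomposition of \cref{P:convergence2}.

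\paragraph{Secondary points, both repairable.} First, your $L^2_{ul}$ bound on $(H_0K)*(\theta^\nu(t)-\theta^\nu_0)$ rests on the same false $L^1$ claim. The bound is true but must be proved by localizing. Take $\psi\equiv1$ on $\supp\eta$ and write
\[
\eta\,(H_0K)*\theta=\eta\,[\psi,(H_0K)*]\theta+\eta\,(H_0K)*(\psi\theta).
\]
Bound the commutator in $L^\iny$ by $\sum_{j\ge0}C2^{-j}\norm{\theta}_{L^\iny}$ using \cref{LPRieszcommutator}, and the second term in $L^2$ by the $L^2$-boundedness of the Riesz transform.

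Second, the homogeneous constitutive law gives only $\Del_j\prt_tu^\nu=(\Del_jK)*\prt_t\theta^\nu$. Your identity therefore holds only up to an additive $U_\iny(t)$, constant in $x$, which need not vanish for the solutions given by \cref{T:SQG1}. Your Gronwall bound is uniform in $\nu$ only after normalizing $U_\iny\equiv0$ by the Galilean change of variables (\cref{C:SerfatiAway}). This normalization is also why \cref{e:SerfatiWeak} has no extra term. The same transformation with any continuous $U_\iny\not\equiv0$, $U_\iny(0)=0$, gives the non-uniqueness claimed in the statement (\cref{P:SQG0Nonunique}), which your proposal does not address.
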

\begin{remark}
Given $(\theta_0,u_0)$ satisfying the assumptions of \cref{T:MainResult}, $T>0$, and a continuous function $U_{\infty}:[0,T]\rightarrow \R^2$, one can show more generally that there exists a weak solution $(\theta,u)$ satisfying (\ref{e:SerfatiWeak}) with $U_{\infty}(t)$ added to the right hand side.  See \cref{R:InviscidSerfatiID} for details.   
\end{remark}
 
    Our weak solutions are defined as follows:

    \begin{definition}\label{weaksolutiondef2}
    Let $T>0$.  A weak solution to \cref{e:SQG0} on $[0,T] \times \R^2$ is a pair 
    \begin{equation}\label{regularitycondition}
    (\theta, u) \in L^{\infty}([0,T]; L^{\infty}(\R^2))\times L^{\infty}([0,T]; L^2_{ul}(\R^2))
    \end{equation}
    which, on all of $[0, T] \times \R^2$, has $\dv u = 0$ and  satisfies the homogeneous constitutive law, and for all $\phi\in C_c^\iny([0,T]\times \R^2)$, 
    \begin{equation}\label{e:WeakId}
    \begin{split}
    &\int_{0}^T\int_{\R^2} \partial_t \phi \, \theta \, dx \, dt  +\int_{0}^T\int_{\R^2} \nabla \phi \cdot u\theta \, dx \, dt = \int_{\R^2} \phi(T) \theta(T)\, dx-\int_{\R^2} \phi(0) \theta_0\, dx. 
    \end{split}
    \end{equation}
    \end{definition}

    \begin{remark}\label{R:WeakerTestFunctions}
        We require our test functions in \cref{weaksolutiondef2} to lie in $C_c^\iny([0,T]\times \R^2)$ for later convenience, but requiring that they lie in $C^1_c([0, T) \times \R^2)$  yields an equivalent definition. Allowing a change in $\theta(t)$ on a set of measure zero in $[0, T]$, an equivalent form of weak solution---that used, for instance, by Resnick in \cite{Resnick}---is obtained by replacing the condition in \cref{e:WeakId} with requiring that for all $\phi \in C_c^\iny(\R^2)$ and all $t \in [0, T]$,
        \begin{align*}
            \int_{\R^2} \phi \theta(t) \, dx
                - \int_0^t \int_{\R^2} \grad \phi \cdot u \theta \, dx \, dt
                = \int_{\R^2} \phi \theta_0 \, dx.
        \end{align*} 
        \cref{R:TimeContinuity} describes the time continuity of $\theta$ and $u$.
    \end{remark}

\smallskip\noindent\textbf{Our approach.}
Our proof of existence in \cref{T:MainResult} proceeds in three  phases:
\begin{Indent}
    \begin{enumerate}[label=\Roman*, align=left]
        \item Establish a Serfati identity separating the solutions into low and high frequencies.
    
        \item Identify a candidate weak solution as a limit of a subsequence of smooth solutions to \cref{e:SQGnu} as $\nu \to 0$.
    
        \item Prove that the candidate solution is, in fact, a solution.
    \end{enumerate}
\end{Indent}

In more detail, the phases are as follows:

\smallskip\noindent\textbf{Phase I.} We obtain a \textit{Serfati identity}, \cref{FourierSerfati2}, for the classical solutions to \cref{e:SQGnu} obtained in \cite{AACK25}. This type of identity was first employed by Serfati in \cite{Serfati} to obtain existence and uniqueness of solutions to the 2D Euler equations with both the velocity and vorticity bounded. In its original form (see also \cite{AKLN2015}), the identity was based on a decomposition in space of a solution to the 2D Euler equations, allowing separate control of the near field and far field of the Biot-Savart kernel. This was also the approach taken in \cite{AKLN2015,CozziKelliher2019,KBounded,Ambrose2025,Ciampa2022,CiampaCrippaSpirito2021}.

We employ a parallel approach, decomposing the solutions to \cref{e:SQGnu} into low and high frequencies using classical Littlewood-Paley operators. This spectral method of  decomposition works more efficiently with the underlying function spaces we employ, particularly $L^2_{ul}$, the space in which the velocity for our weak solutions lie. We note that in \cite{Taniuchi} p. 178, Taniuchi, though not explicitly using a Serfati identity, uses a spectral decomposition of the velocity formulation of solutions to the 2D Euler equations for somewhat similar purposes.

Whether decomposed by physical space or frequency space, an identity like that of Serfati in \cite{Serfati} we refer to as a \textit{Serfati identity}. Although designed for, and especially suited to, non-decaying data, these types of identities are also quite useful in applications to decaying solutions and, at least for spatial decompositions, solutions in domains.

\bigskip

\smallskip\noindent\textbf{Phase II.} We employ the Serfati identity from Phase I along with the results of \cite{AACK25} to establish sufficient control on smooth, non-decaying solutions to \cref{e:SQGnu}.  From this control we obtain, in the limit as $\nu \to 0$, a candidate weak solution to \cref{e:SQG0}. We use non-decaying solutions to \cref{e:SQGnu}, given by \cref{T:SQG1}, because these solutions make it possible to obtain uniform bounds on the smoothed initial data (as in \cref{prop:convergence-of-initial-data-u}).  The following theorem is proved in \cite{AACK25} (here, $C^k_b(\R^2)$ denotes the space of $k$-times differentiable functions on $\R^2$ with continuous, bounded derivatives up to and including order $k$):

\begin{theorem}[\cite{AACK25}]\label{T:SQG1}
Let $k\geq 2$ be an integer. Assume $\theta_0$, $u_0$ belong to $C^k_b(\R^2)$ and $(\theta_0,u_0)$ satisfies the homogeneous constitutive law, with div $u_0=0$.  Then for each $T>0$, there exists a classical solution ($\theta,u$) to \cref{e:SQGnu} in $C^1( (0,T]; C^k_b(\R^2) ) \times C( (0,T]; C^k_b(\R^2) )$. Moreover, $\theta$ satisfies
\begin{equation}\label{max}
\| \theta (t) \|_{L^{\infty}} \leq \|\theta_0 \|_{L^{\infty}} \quad \text{for all } t\in [0,T],
\end{equation}
($\theta,u$) satisfies the homogeneous constitutive law on $(0,T]\times \R^2$, and div $u=0$ on $(0,T]\times \R^2$. 
\end{theorem}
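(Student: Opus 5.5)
The plan is to construct the solution by a fixed-point argument for the mild (Duhamel) formulation and then upgrade its regularity. We use the heat semigroup $e^{\nu t\Delta}$ and work in non-decaying spaces. The key structural point is that, under the homogeneous constitutive law, $u$ is determined by $\theta$ only up to its low frequencies. We therefore carry $u$ as an independent unknown tied to $\theta$ through a Serfati-type identity, rather than writing $u = K*\theta$.

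\textbf{Step 1: the coupled system.} Writing $u\cdot\grad\theta = \dv(\theta u)$, the temperature satisfies
\[
\theta(t) = e^{\nu t\Delta}\theta_0 - \int_0^t e^{\nu(t-s)\Delta}\dv(\theta u)(s)\,ds.
\]
For the velocity we split $u = S_N u + (I-S_N)u$. The high-frequency part $(I-S_N)u = (H_N K)*\theta$ is a bounded operator on $C^k_b$ applied to $\theta$, since $H_N K$ has an integrable kernel away from the origin after smoothing. For the low-frequency part we differentiate in time and use the $\theta$ equation, giving
\[
S_N u(t) = S_N u_0 + \int_0^t \bigl(-S_N\grad K\stardot(\theta u) + \nu\Delta S_N K*\theta\bigr)\,ds .
\]
Here $S_N\grad K$ is a tempered kernel that decays like $|x|^{-3}$, so it is integrable at infinity and smooth. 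Hence this expression makes sense for bounded $\theta u$ and is bounded on $C^k_b$. This gives a closed system for $(\theta,u)$.

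\textbf{Step 2: local existence.} We run a contraction in $C([0,T_0];C^k_b)^2$. The heat semigroup gains one derivative at a cost of $(\nu(t-s))^{-1/2}$, which is integrable in time. This handles the loss of a derivative in $\dv(\theta u)$, while the $u$-equation involves no loss at all. The existence time $T_0$ depends only on $\nu$ and the $C^k_b$ norms of the data.

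\textbf{Step 3: consistency and the maximum principle.} We show that the fixed point satisfies the homogeneous constitutive law for all $t$. Apply $\Del_j$ to both sides: $\Del_j(u - K*\theta)$ satisfies a linear equation with zero data, so it vanishes. We then check that $\dv u = 0$, which is immediate from the structure of the kernels, and that $\theta$ solves the PDE classically, by parabolic regularity. The maximum principle \cref{max} follows from the classical maximum principle for $\prt_t\theta + u\cdot\grad\theta - \nu\Delta\theta = 0$ with bounded smooth coefficients on $\R^2$. On an unbounded domain this is done with a barrier $\eps(|x|^2 + Ct)$ to handle the behaviour at infinity.

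\textbf{Step 4: globalization (main obstacle).} The main difficulty is a priori control of $\|u\|_{L^\infty}$, and then of the $C^k_b$ norms, on $[0,T]$ for arbitrary $T$. From Step 1 and the bound $\|\theta\|_{L^\infty}\le\|\theta_0\|_{L^\infty}$ we get
\[
\|u(t)\|_{L^\infty} \le \|u_0\|_{L^\infty} + C\|\theta_0\|_{L^\infty}\|H_NK\|_{L^1\text{-type}} + Ct\|\theta_0\|_{L^\infty}\sup_{s\le t}\|u(s)\|_{L^\infty} + C\nu t .
\]
This is linear in $u$, so Grönwall gives a bound that grows at most exponentially. The term $(H_NK)*\theta$ is not quite bounded on $L^\infty$, which has to be handled. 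I would first try to absorb it into a $C^\al$ estimate, or to use an $L^\infty$ bound on $\grad\theta$ from the parabolic smoothing. With $\|u\|_{L^\infty}$ controlled, higher norms follow by a standard energy-free $C^k_b$ estimate: differentiate the mild formula and apply Grönwall, using the parabolic gain and subcriticality ($\nu>0$) to close. Iterating the local result then reaches any $T$.
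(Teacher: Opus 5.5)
This paper only quotes the statement from \cite{AACK25}, so I am judging your proposal on its own terms. It has a genuine gap at its core.

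In Step~1 you assert that $(H_NK)*$ is bounded on $C^k_b$ because $H_NK$ ``has an integrable kernel away from the origin after smoothing.'' But $S_NK$ is smooth, so $H_NK=K-S_NK$ keeps the non-integrable $|x|^{-2}$ singularity of $K$ at the origin. Thus $(H_NK)*$ is a truncated Riesz transform and is unbounded on $L^\infty$: each $(\Del_jK)*$ is bounded uniformly in $j$, but summing over $j\ge N$ loses a logarithm. It preserves $C^{k,\alpha}$ for $0<\alpha<1$, but not $C^k_b$.

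You concede the $L^\infty$ failure in Step~4, but it already breaks Step~2. The fixed-point map does not send $C([0,T_0];C^k_b)^2$ into itself, and ``the $u$-equation involves no loss at all'' is false. The same failure invalidates the displayed inequality in Step~4, and the argument that would replace it is left as ``I would first try\ldots''. Since the global-in-time claim rests entirely on that step, the proposal does not yet prove the theorem.

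Your closed system is exactly \eqref{FourierSerfati2} with $U_\iny\equiv0$. Note that the paper only ever estimates its high-frequency term $(H_NK)*(\theta(t)-\theta_0)$ in $L^2_{ul}$, using $L^2$-boundedness of $K$ and \cref{LPRieszcommutator}, never in $L^\infty$, for precisely this reason.

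The missing idea is to let the Riesz operator act only through the heat semigroup. Take as the velocity equation the heat-propagated form
\[
u(t)=e^{\nu t\Delta}u_0-\int_0^t\bigl(e^{\nu(t-s)\Delta}\nabla K\bigr)\stardot(\theta u)(s)\,ds .
\]
This is formally $K*$ applied to the Duhamel formula for $\theta$, with $K*\theta_0$ replaced by $u_0$ via the constitutive law; this is where $u_0\in C^k_b$ is used. By scaling, $e^{\tau\Delta}\nabla K=\tau^{-3/2}G(\cdot/\sqrt{\tau})$, where $\widehat G$ is a Gaussian times a symbol homogeneous of degree one. Hence $G$ is smooth with $|G(x)|\lesssim(1+|x|)^{-3}$, and $\|e^{\tau\Delta}\nabla K\|_{L^1}=C\tau^{-1/2}$. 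This gives three things:
\begin{itemize}
\item the $u$-map now has the same integrable $(t-s)^{-1/2}$ singularity as the $\theta$-map, so the contraction closes;
\item $\Del_ju-(\Del_jK)*\theta$ genuinely solves the heat equation with zero data;
\item with the maximum principle you get
\[
\|u(t)\|_{L^\infty}\le\|u_0\|_{L^\infty}+C\|\theta_0\|_{L^\infty}\int_0^t(\nu(t-s))^{-1/2}\|u(s)\|_{L^\infty}\,ds .
\]
\end{itemize}
This is a linear Volterra inequality, so it gives a global bound with no H\"older or logarithmic estimate. The $C^k_b$ norms then follow inductively, since each estimate is linear in the top-order norms. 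Your $C^\alpha$ suggestion can also be completed, using $\|(H_NK)*\theta\|_{L^\infty}\lesssim 2^{-N\alpha}\|\theta\|_{C^\alpha}$ and the $(t-s)^{-(1+\alpha)/2}$ smoothing of the heat flow. Even so, the local theory still has to be repaired as above.

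Two smaller points:
\begin{itemize}
\item $C([0,T_0];C^k_b)$ is the wrong space. Convergence $e^{\nu t\Delta}\theta_0\to\theta_0$ in $C^k_b$ fails unless $\nabla^k\theta_0$ is uniformly continuous, which is why the theorem claims continuity only on $(0,T]$.
\item In your barrier $\eps(|x|^2+Ct)$, the drift contributes $2\eps\, u\cdot x$. Either take $C\sim\eps^{-1/2}$ (the maximum occurs at $|x|\lesssim\eps^{-1/2}$), or use $\eps e^{\lambda t}(1+|x|^2)$ with $\lambda>\|u\|_{L^\infty}+4\nu$.
\end{itemize}
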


\smallskip\noindent\textbf{Phase III.}
As is usually the case, the main difficulty is showing that the nonlinear term of the approximating solutions converges to that of the candidate solution. The key to this is \cref{L:ffInt}, which we employ in the proof of \cref{P:convergence2}. \cref{L:ffInt} is analogous to Marchand's Lemma 2.1 in \cite{Marchand} and to (2.11) in \cite{Resnick}; both are similarly used to establish convergence of the nonlinear term.

\smallskip\noindent\textbf{A brief history of SQG.}
The non-diffusive SQG equations, \cref{e:SQG0} with the constitutive law $u = K * \theta = \grad^\perp (-\Delta)^{-\frac{1}{2}} \theta$, were introduced by Constantin, Majda, and Tabak \cite{ConstantinMajdaTabak1994A}, further expounded upon by the same authors in \cite{ConstantinMajdaTabak1994B}, to model atmospheric fluid flows and as a 2D analogy to the 3D Euler equations. This excited a great deal of work among theoreticians to investigate analytically properties of solutions to SQG with and without diffusion: existence of weak solutions, well-posedness of smooth solutions, decay of solutions in time, 
etc.. It is impossible to give even a limited account here of the full range of activity; we mention only those results having a more direct impact on the present work.

Resnick \cite{Resnick} was the first to establish the existence of weak solutions to the non-diffusive SQG equations, for $\theta \in L^\iny(0, T; L^2(\R^2))$, though our proof of existence for non-decaying data is closer to that of Marchand in \cite{Marchand}. Resnick also established the maximum principle, $\norm{\theta(t)}_{L^p} \le \norm{\theta_0}_{L^p}$, $1 < p \le \iny$, for smooth solutions to dissipative SQG on the torus. The decay of $\norm{\theta(t)}_{L^p}$ over time, including $p = \iny$, for diffusive equations was demonstrated by C\'ordoba and C\'ordoba \cite{cordoba2004maximum}.

The vanishing viscosity limit, in which the diffusive parameter ($\nu$ here) is taken to zero, has been studied in two ways. In one, such as in \cite{Wu1997,Berselli}, the solution to non-diffusive SQG is already known, and the convergence of the diffusive to non-diffusive SQG is demonstrated, typically with a rate of convergence. In the other, uniform-in-$\nu$ control is obtained on the solutions to the diffusive equations, and a compactness argument of some sort leads to the existence of solutions to the non-diffusive equations. This approach is taken by Marchand \cite{Marchand} and here, as well as in the recent \cite{LuigiLatocca2026}.

\smallskip\noindent\textbf{Organization of the paper.} In \cref{S:preliminary}, we define the Littlewood-Paley operators, establish a number of lemmas related to the Riesz kernel $K$ that we will use throughout the rest of the paper, and explore the homogeneous constitutive law. We prove the non-uniqueness of weak solutions to \cref{e:SQG0} in \cref{S:NonUniquess}. We implement Phases I, II, and III in \cref{S:Apriori}, \ref{S:SolutionCandidate}, and \ref{S:Existence}, respectively.

\section{Definitions and preliminary lemmas}\label{S:preliminary}
In this section, we give some notation, definitions, and lemmas that will be useful in what follows.

We set $$\Lambda = (-\Delta)^{1/2}.$$  We use $\Phi$ to denote the fundamental solution of the fractional Laplacian $\Lambda$ on $\R^2$; that is,    
\begin{equation*}
	\Phi(x) = \frac{1}{2 \pi |x|},
\end{equation*}
and we set $$K=\nabla^{\perp}\Phi.$$

\begin{definition}
We define the space $L^2_{ul} (\R^d)$ by
$$L^2_{ul} (\R^d) := \{ f\in \mathcal{S}'(\R^d) : \|f\|_{L^2_{ul}} <\infty\},$$
where
\begin{equation}
    \|f\|_{L^2_{ul}} := \sup_{x\in \R^d} \left(\int_{|x-y|<1} |f(y)|^2 dy\right)^{1/2}.
\end{equation}
\end{definition}
\begin{definition}\label{H}
Let $R>0$.  Define the set $H_R(\R^d)$ to be the set of $\eta\in C^\infty_c(\R^d)$ for which $\supp (\eta) \subset B_R(x)$ for some $x$ and $\|\eta\|_{L^{\infty}} \leq 1$.
\end{definition}
It is easy to see that for each $R>0$, there is the equivalence of norms
$$\|f\|_{L^2_{ul}} = \sup_{\eta\in H_R} \|\eta f \|_{L^2}.$$
\subsection{The Littlewood-Paley operators}\label{S:LPOperators}
We now give an overview of the Littlewood-Paley operators and some of their properties.  It is classical that there exist two functions ${\chi}, {\varphi} \in \mathcal{S}(\R^d)$ with supp $\wh{\chi}\subset \{\xi\in \R^d: |\xi |\leq \frac{5}{6} \}$ and supp $\wh{\varphi}\subset \{\xi\in \R^d: \frac{3}{5} \leq|\xi |\leq \frac{5}{3} \}$, such that, if for every $j\in\Z$ we set $\varphi_j(x) = 2^{jd} \varphi(2^j x)$, then
\begin{equation*}
\begin{split}
	&\wh{\chi}+ \sum_{j=0}^{\infty} \wh{\varphi_j}
		= \wh{\chi} + \sum_{j=0}^{\infty} \wh{\varphi}(2^{-j} \cdot) 
		\equiv 1.
\end{split}
\end{equation*}

For $f\in \mathcal{S}'(\R^d)$ and $j\in\Z$, 
define the homogeneous Littlewood-Paley operators $\dot{\Delta}_j$ by
\begin{equation*}
    \dot{\Delta}_j f = {\varphi}_j \ast f.
\end{equation*}

For $n\in\Z$, define ${\chi}_n \in \mathcal{S}(\R^d)$ in terms of its Fourier transform ${\wh{\chi}}_n$, where ${\wh{\chi}}_n$ satisfies 
\begin{equation*}
{\wh{\chi}}_n (\xi) =   1 - \sum_{j=n }^{\infty} \wh{\varphi}_j(\xi)
\end{equation*}
for all $\xi\in\R^d$. For $f\in\mathcal{S}'(\R^d)$, define the operator $S_n$ by  
\begin{equation*}
S_n f = {\chi}_n \ast f.
\end{equation*}
For each $n\in\Z$, we set $$H_n = Id - S_n.$$

We will make use of Bernstein's Lemma in what follows.  A proof of the lemma can be found in \cite{Chemin1}, Chapter 2.  Below, $C_{a,b}(0)$ denotes the annulus with inner radius $a$ and outer radius $b$ centered at the origin.  
\begin{lem}\label{bernstein}
(Bernstein's Lemma) Let $r_1$ and $r_2$ satisfy $0<r_1<r_2<\infty$, and let $p$ and $q$ satisfy $1\leq p \leq q \leq \infty$. There exists a positive constant $C$ such that for every integer $k$, if $u$ belongs to $L^p(\R^d)$, and supp $\wh{u}\subset B_{r_1\lambda}(0)$, then 
\begin{equation}\label{bern1}
\sup_{|\alpha|=k} ||\partial^{\alpha}u||_{L^q} \leq C^k{\lambda}^{k+d(\frac{1}{p}-\frac{1}{q})}||u||_{L^p}.
\end{equation}
Furthermore, if supp $\wh{u}\subset C_{r_1\lambda, r_2\lambda}(0)$ then 
\begin{equation}\label{bern2}
C^{-k}{\lambda}^k||u||_{L^p} \leq \sup_{|\alpha|=k}||\partial^{\alpha}u||_{L^p} \leq C^{k}{\lambda}^k||u||_{L^p}.
\end{equation} 
\end{lem}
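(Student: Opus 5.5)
The plan is the standard Fourier-analytic one: rescale to $\lambda = 1$ and write $u$ as a fixed smooth bump convolved with itself. Since $\wh{u}$ is compactly supported, $u$ is smooth, and we may treat $u\in L^p$ with the stated support conditions. Set $u_\lambda(x) = u(x/\lambda)$. Then $\wh{u_\lambda}$ is supported in $B_{r_1}(0)$ (respectively in $C_{r_1,r_2}(0)$). Both sides of \cref{bern1} and \cref{bern2} scale by the same power of $\lambda$:
\begin{align*}
\partial^\alpha u_\lambda &= \lambda^{-k}(\partial^\alpha u)(\cdot/\lambda),\\
\|f(\cdot/\lambda)\|_{L^p} &= \lambda^{d/p}\|f\|_{L^p}.
\end{align*}
So it suffices to prove the case $\lambda=1$.

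For \cref{bern1}, fix $\psi\in\mathcal{S}(\R^d)$ with $\wh\psi\equiv 1$ on $B_{r_1}(0)$, so that $u = \psi * u$ and $\partial^\alpha u = (\partial^\alpha\psi) * u$. Young's inequality with $1 + 1/q = 1/r + 1/p$ gives
\[
\|\partial^\alpha u\|_{L^q} \le \|\partial^\alpha \psi\|_{L^r}\|u\|_{L^p}.
\]
The one delicate point is that the constant must be geometric in $k$, namely $C^k$. I would handle this by bounding $\|\partial^\alpha\psi\|_{L^r}\le \|\partial^\alpha\psi\|_{L^1}^{1/r}\|\partial^\alpha\psi\|_{L^\infty}^{1-1/r}$ and controlling each factor through the Fourier side. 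Write $\partial^\alpha\psi$ as the inverse transform of $(i\xi)^\alpha\wh\psi$. The $L^\infty$ norm is then at most $\|(i\xi)^\alpha\wh\psi\|_{L^1}\lesssim R^k$, where $R$ is the radius of $\supp\wh\psi$. For the $L^1$ norm, use $\|g\|_{L^1}\lesssim \|(1+|x|^2)^{d}g\|_{L^\infty}$ and integrate by parts $2d$ times in $\xi$. Each derivative landing on $\xi^\alpha$ loses a factor of at most $k$ but removes one power of $\xi$, so everything is bounded by $C(d)\,k^{2d}R^k\le C^k$. This bookkeeping is the main (though mild) obstacle.

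For \cref{bern2}, the upper bound is the case $p=q$ of \cref{bern1}. For the lower bound, choose $\wh\phi\in C_c^\infty$ supported in an annulus and equal to $1$ on $C_{r_1,r_2}(0)$. Using $|\xi|^{2k} = \sum_{|\alpha|=k}\frac{k!}{\alpha!}\xi^{2\alpha}$, write
\[
u = \sum_{|\alpha|=k}\frac{k!}{\alpha!}\,g_\alpha * \partial^\alpha u,
\qquad
\wh{g_\alpha}(\xi) = (-i\xi)^{\alpha}|\xi|^{-2k}\wh\phi(\xi),
\]
which is smooth and compactly supported away from the origin. Young's inequality then gives
\[
\|u\|_{L^p}\le \Big(\sum_{|\alpha|=k}\tfrac{k!}{\alpha!}\|g_\alpha\|_{L^1}\Big)\sup_{|\alpha|=k}\|\partial^\alpha u\|_{L^p}.
\]
Bounding $\|g_\alpha\|_{L^1}$ by the same integration-by-parts estimate yields a factor $C^k$, because $r_1^{-2k}$ and $\sum_\alpha k!/\alpha! = d^k$ are both geometric in $k$. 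This gives the left inequality after rescaling.
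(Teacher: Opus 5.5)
Your proposal is correct and is the standard argument from Chapter~2 of Chemin's book, which the paper cites in place of a proof of its own. That argument rescales to $\lambda=1$, writes $\partial^\alpha u=(\partial^\alpha\psi)*u$, controls $\|\partial^\alpha\psi\|_{L^r}$ through the weight $(1+|x|^2)^d$ and integration by parts in $\xi$, and obtains the reverse inequality by inverting $|\xi|^{2k}=\sum_{|\alpha|=k}\frac{k!}{\alpha!}\xi^{2\alpha}$ on the annulus.

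The one thing to adjust is the case $k=0$ with $p<q$. There your final claim $\|\partial^\alpha\psi\|_{L^r}\le C^k$ cannot hold. In fact \eqref{bern1} with constant $C^0=1$ is false for large $r_1$: rescaling a fixed band-limited function so its spectrum fills $B_{r_1}$ makes $\|u\|_{L^q}/\|u\|_{L^p}$ grow like $r_1^{d(1/p-1/q)}$. So what your argument actually proves is the constant $C^{k+1}$. That can be rewritten as $C^k$, with a larger $C$, only for $k\ge1$, which is the only case the paper uses.
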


We now state a lemma giving boundedness of Littlewood-Paley operators on $L^2_{ul}(\R^d)$.  
\begin{lemma}\label{SnBound}
Assume $f$ belongs to $L^2_{ul}(\R^d)$.  There exists a constant $C>0$ such that for all $n, j\in\Z$,
    \begin{equation*}
        \| S_n f \|_{L^2_{ul}} \leq C\|f \|_{L^2_{ul}},\quad
        \| H_n f \|_{L^2_{ul}} \leq C\|f \|_{L^2_{ul}},\quad
        \| \dot{\Delta}_j f \|_{L^2_{ul}} \leq C\|f \|_{L^2_{ul}}.
    \end{equation*}
\end{lemma}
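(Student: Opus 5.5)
The plan is to reduce all three bounds to one fact. Let $g \in L^1(\R^d)$ satisfy the weighted bound
\begin{equation*}
\int_{\R^d} (1+|y|)^{d} |g(y)|\, dy < \infty ,
\end{equation*}
or, more simply, be dominated by an integrable radially decreasing function. Then convolution with $g$ is bounded on $L^2_{ul}$. The kernels here are $\chi_n$, $\varphi_j$, and $\delta - \chi_n$. They are all dilates of fixed Schwartz functions: $\varphi_j(x)=2^{jd}\varphi(2^jx)$ and $\chi_n(x) = 2^{nd}\chi_0(2^n x)$. So the real issue is uniformity in the dilation parameter.

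\textbf{Covering estimate.} For the core estimate I would cover $\R^d$ by unit cubes $Q_k = k + [0,1)^d$, $k\in\Z^d$, and use that $\|f\|_{L^2(Q_k)} \le C\|f\|_{L^2_{ul}}$ uniformly in $k$. Fix $x_0$ and the ball $B=B_1(x_0)$, and write $f = \sum_k f\CharFunc_{Q_k}$. By Young's inequality applied on each piece,
\begin{equation*}
\|g * (f\CharFunc_{Q_k})\|_{L^2(B)} \le \Big(\sup_{z\in B,\, y\in Q_k} \ldots\Big)
\end{equation*}
More concretely, $\|g*(f\CharFunc_{Q_k})\|_{L^2(B)} \le \|g\CharFunc_{A_k}\|_{L^1}\,\|f\|_{L^2(Q_k)}$, where $A_k = B - Q_k$ is a set of diameter $\le C$ centered near $x_0-k$. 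Summing over $k$ gives
\begin{equation*}
\|g*f\|_{L^2(B)} \le C\|f\|_{L^2_{ul}} \sum_k \|g\|_{L^1(A_k)} \le C' \|g\|_{L^1}\,\|f\|_{L^2_{ul}}.
\end{equation*}
The last step holds because the sets $A_k$ overlap at most a bounded number $N(d)$ of times. The bound is uniform in $x_0$, so $\|g*f\|_{L^2_{ul}} \le C\|g\|_{L^1}\|f\|_{L^2_{ul}}$ with $C$ depending only on $d$. This is better than a weighted bound: only $\|g\|_{L^1}$ appears. Since $\|\varphi_j\|_{L^1}=\|\varphi\|_{L^1}$ and $\|\chi_n\|_{L^1}=\|\chi_0\|_{L^1}$ are independent of $j,n$ by scaling, the bounds for $\Del_j$ and $S_n$ follow with uniform constants. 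For $H_n = \mathrm{Id}-S_n$ the triangle inequality gives $\|H_nf\|_{L^2_{ul}} \le (1+C\|\chi_0\|_{L^1})\|f\|_{L^2_{ul}}$.

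\textbf{Justifications.} Three routine points need checking. First, $\chi_n$ is indeed the dilate $2^{nd}\chi_0(2^n\cdot)$: this follows since $\wh{\chi}_n(\xi) = \wh{\chi}_0(2^{-n}\xi)$, and $\wh{\chi}_0 = \wh\chi + \sum_{j<0}\wh\varphi_j$ is smooth and compactly supported, so $\chi_0\in\mathcal S$. Second, for $f\in L^2_{ul}\subset L^1_{loc}$ with polynomially bounded local norms, the convolution with a Schwartz kernel is defined pointwise as an absolutely convergent integral. This follows from the same cube decomposition, and it agrees with the distributional definition. Third, the overlap count of the $A_k$ must be checked.

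The main (though mild) obstacle is the uniformity in $n,j$, which the scaling invariance of the $L^1$ norm handles. The second potential subtlety is making sense of $S_nf$ for merely $L^2_{ul}$ (non-decaying) $f$, which I would treat as described above.
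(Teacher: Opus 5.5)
Your proposal is correct, and it proves what the paper leaves to a citation. The paper cites \cite{ACEK} for the $S_n$ bound. It then gets $H_n$ by the triangle inequality, exactly as you do, and states that the $\dot{\Delta}_j$ case is identical. You instead reduce all three bounds to one fact: convolution with any $g\in L^1(\R^d)$ is bounded on $L^2_{ul}$ with constant $C(d)\|g\|_{L^1}$. Uniformity in $n,j$ then follows from the dilation invariance $\|\chi_n\|_{L^1}=\|\chi_0\|_{L^1}$, $\|\varphi_j\|_{L^1}=\|\varphi\|_{L^1}$. This makes the lemma self-contained and shows that no decay of the kernels beyond integrability is needed.

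Two remarks. First, you can skip the cube decomposition and overlap count by using Minkowski's integral inequality, the same device the paper uses in \eqref{nablasntozero}:
\[
\|g*f\|_{L^2(B_1(x_0))}\le\int_{\R^d}|g(y)|\,\|f\|_{L^2(B_1(x_0-y))}\,dy\le\|g\|_{L^1}\|f\|_{L^2_{ul}},
\]
which gives the constant $\|g\|_{L^1}$ with no dimensional factor. (Your overlap argument does check out, since the sets $A_k$ are lattice translates of one fixed bounded set.)

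Second, your formula $\wh{\chi}_0=\wh{\chi}+\sum_{j<0}\wh{\varphi}_j$ is wrong. With the paper's normalization $\wh{\chi}+\sum_{j\ge 0}\wh{\varphi}_j\equiv 1$, one simply has $\wh{\chi}_0=\wh{\chi}$, that is, $\chi_0=\chi$. The slip is harmless, because you only use $\chi_0\in\mathcal{S}$ and $\wh{\chi}_n=\wh{\chi}_0(2^{-n}\cdot)$, and both hold.
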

\begin{proof} A proof of the first inequality in \cref{SnBound} can be found in \cite{ACEK}, from which the estimate for $H_n$ follows immediately.  The proof of the final inequality in \cref{SnBound} is identical to the proof for $S_n$. \end{proof}

\subsection{Riesz kernel estimates}  We establish several useful estimates involving the Riesz kernel and Littlewood-Paley operators.
\begin{lemma}\label{L:DelKxBound}
    For all $j \in \Z$,
	$\smallnorm{x \otimes \Del_j K(x)}_{L^1},
    \smallnorm{\abs{x} \Del_j K(x)}_{L^1} \le C 2^{-j}$.
\end{lemma}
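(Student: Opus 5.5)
The plan is to reduce to $j=0$ by scaling and then use that $\Del_0 K$ is a Schwartz function decaying faster than any power.

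\textbf{Scaling.} $K$ is homogeneous of degree $-2$ in $\R^2$, since $K(\lambda x) = \lambda^{-2} K(x)$. Hence $\wh K(\xi) = i\xi^\perp/|\xi|$ (up to a constant) is homogeneous of degree $0$. Since $\wh{\varphi_j}(\xi) = \wh\varphi(2^{-j}\xi)$, we get
\[
\wh{\Del_j K}(\xi) = \wh\varphi(2^{-j}\xi)\,\wh K(2^{-j}\xi).
\]
Inverting the Fourier transform gives $\Del_j K(x) = 2^{2j} (\Del_0 K)(2^j x)$. The change of variables $y = 2^j x$ then yields
\[
\int_{\R^2} |x|\,\abs{\Del_j K(x)}\,dx = 2^{-j}\int_{\R^2} |y|\,\abs{\Del_0 K(y)}\,dy.
\]
Since $|x\otimes \Del_j K(x)| \le |x|\,|\Del_j K(x)|$ pointwise, it suffices to bound the second norm. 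Both claims therefore reduce to showing $\smallnorm{|y|\Del_0 K}_{L^1} < \iny$.

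\textbf{Finiteness at $j=0$.} The symbol $\wh\varphi(\xi)\, i\xi^\perp/|\xi|$ is $C^\iny$ and compactly supported in the annulus $\{3/5\le|\xi|\le 5/3\}$. This is because the singularity of $\wh K$ at the origin lies outside the support of $\wh\varphi$. So the symbol lies in $\mathcal S(\R^2)$, and therefore $\Del_0 K \in \mathcal S(\R^2)$. In particular $|y|\,|\Del_0 K(y)| \le C(1+|y|)^{-3}$, which is integrable on $\R^2$. This gives the bound with $C = \smallnorm{|y|\Del_0K}_{L^1}$, independent of $j$.

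\textbf{Main obstacle.} The only delicate point is making sense of $\Del_j K = \varphi_j * K$ for the locally integrable, non-decaying kernel $K$ (of size $|x|^{-2}$). I would interpret it distributionally, as $\varphi_j * \nabla^\perp\Phi = \nabla^\perp\varphi_j * \Phi$, a convolution of a Schwartz function with an $L^1_{loc}$ function decaying like $|x|^{-1}$. One then checks that its Fourier transform is the product of symbols used above. This is standard: $\wh\Phi = c|\xi|^{-1}$ is locally integrable in $\R^2$, so the product with $\wh\varphi_j$ is a legitimate tempered distribution, and in fact a smooth compactly supported function.
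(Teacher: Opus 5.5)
Your proof is correct and is essentially the paper's argument. The paper performs the same dilation on the Fourier side, writing $\FTF(x\,\Del_j K^n)(\xi) = i\grad_\xi\bigl[\wh{\varphi}(2^{-j}\xi)\wh{K^n}(2^{-j}\xi)\bigr] = 2^{-j}g(2^{-j}\xi)$ with $g = i\grad(\wh{\varphi}\,\wh{K^n})\in\Cal{S}(\R^2)$, which is your identity $x\,\Del_jK(x) = 2^{j}(2^jx)\,(\Del_0K)(2^jx)$ read in frequency. The only differences are these: the paper deduces the $\abs{x}\Del_jK$ bound from the tensor bound using the radial structure of $\Del_jK$, whereas your pointwise inequality $\abs{x\otimes\Del_jK(x)}\le\abs{x}\,\abs{\Del_jK(x)}$ (in fact an equality) goes the other way and needs no symmetry; and, contrary to what you state, $K$ is not locally integrable in $\R^2$, since it has size $\abs{x}^{-2}$, though defining $\Del_jK$ through $\Phi\in L^1_{loc}$, as you then do, makes this harmless.
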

\begin{proof}
	Using that $\wh{K}(\xi) = \wh{K}(2^{-j} \xi)$ and setting
	$\eta = 2^{-j} \xi$, for $n = 1, 2$,
	\begin{align*}
		\FTF &\pr{\Del_j K^n(x) x}(\xi)
			= i \grad_\xi \FTF \pr{\Del_j K^n}(\xi)
			= i \grad_\xi \pr{\wh{\varphi_j}(\xi) \wh{K^n}(\xi)} \\
			&= i \grad_\xi \pr{\wh{\varphi}(2^{-j} \xi)
				\wh{K^n}(2^{-j} \xi)}
			= 2^{-j} g(\eta),
	\end{align*}
	where $g(\eta) := i \grad_\eta (\wh{\varphi} \wh{K^n})(\eta)
	\in \Cal{S}(\R^2)$. Hence, $\FTR (g) \in \Cal{S}(\R^2) \subseteq L^1(\R^2)$, and
	\begin{align*}
		\smallnorm{x \otimes \Del_j K(x)}_{L^1}
            &\le \sum_{n = 1}^2 \smallnorm{\Del_j K^n(x) x}_{L^1}
            \le 2^{-j} \smallnorm{\FTR (g)}_{L^1}
			= C 2^{-j}.
	\end{align*}

    The radial symmetry of $\varphi_j$ preserves the property that $K(x) \cdot x^\perp = \abs{x} \abs{K(x)}$; that is, $\Del_j K(x) \cdot x^\perp = \abs{x} \smallabs{\Del_j K(x)}$. Hence, the second inequality follows from the first.
\end{proof}

\begin{lemma}\label{LPRieszcommutator}
Given $\psi\in C^{\infty}_c(\R^2)$ and $f\in L^{\infty}(\R^2)$, for every $j\in \Z$,
\begin{equation*}
\begin{split}
&\|[\psi, {\dot{\Delta}}_j] f\|_{L^{\infty}} \leq C2^{-j} \|f\|_{L^{\infty}},\\
&\|[\psi, ({\dot{\Delta}}_jK)\ast] f\|_{L^{\infty}} \leq C2^{-j} \|f\|_{L^{\infty}}.
\end{split}
\end{equation*}
\end{lemma}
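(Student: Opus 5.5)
Both estimates reduce to the same elementary computation: write the commutator as an integral operator and use the mean value theorem on $\psi$. For a kernel $L \in L^1(\R^2)$,
\begin{align*}
    [\psi, L*] f(x) = \psi(x)\int L(x-y) f(y)\,dy - \int L(x-y)\psi(y) f(y)\,dy
        = \int L(x-y)\bigl(\psi(x)-\psi(y)\bigr) f(y)\,dy .
\end{align*}
Since $\abs{\psi(x)-\psi(y)} \le \norm{\grad\psi}_{L^\iny}\abs{x-y}$, we obtain
\begin{align*}
    \norm{[\psi, L*] f}_{L^\iny} \le \norm{\grad\psi}_{L^\iny}\,\norm{\abs{z} L(z)}_{L^1}\,\norm{f}_{L^\iny}.
\end{align*}
So the lemma follows once we show $\norm{\abs{z}\varphi_j(z)}_{L^1}\le C2^{-j}$ and $\norm{\abs{z}\Del_j K(z)}_{L^1}\le C2^{-j}$, with $C$ depending only on $\psi$ through $\norm{\grad\psi}_{L^\iny}$.

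For the first commutator, take $L=\varphi_j$. Since $\varphi\in\Cal S$, scaling gives $\int \abs{z}\,2^{2j}\abs{\varphi(2^j z)}\,dz = 2^{-j}\int\abs{w}\abs{\varphi(w)}\,dw = C2^{-j}$.

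For the second commutator, take $L=\Del_j K=\varphi_j*K$. The bound $\norm{\abs{x}\Del_j K(x)}_{L^1}\le C2^{-j}$ is exactly \cref{L:DelKxBound}. We also need $\Del_j K\in L^1$, so that $(\Del_jK)*f$ is well defined for $f\in L^\iny$ and the rearrangement above is legitimate. This holds because $\Del_j K$ is the inverse Fourier transform of $\wh\varphi(2^{-j}\xi)\wh K(2^{-j}\xi)$. Since $\wh\varphi$ is supported in an annulus, this symbol is smooth and compactly supported, so $\Del_jK = 2^{2j}G(2^j\cdot)$ with $G\in\Cal S$.

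There is no real obstacle. The only points needing care are the integrability of $\Del_jK$, which lets us write the commutator as an absolutely convergent integral for bounded $f$, and the fact that the dependence on $\psi$ enters only through $\norm{\grad\psi}_{L^\iny}$. The $2^{-j}$ gain comes entirely from the first moment of the kernel.
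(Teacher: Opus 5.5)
Your proposal is correct and is essentially the paper's argument. You write each commutator as an integral with the factor $\psi(x)-\psi(y)$, bound that factor by $\norm{\grad\psi}_{L^\iny}\abs{x-y}$, and get the $2^{-j}$ from the first moment of $\varphi_j$ (by scaling) and of $\Del_j K$ (by \cref{L:DelKxBound}); your observation that $\Del_j K = 2^{2j}G(2^j\cdot)$ with $G\in\Cal{S}$ would also give that first-moment bound directly, by the same scaling you used for $\varphi_j$.
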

\begin{proof}
For the first inequality, note that for any function $f\in L^{\infty}(\R^2)$, using a change of variables,
\begin{equation}\label{term2}
\begin{split}
&\abs{ [\psi, \dot{\Delta}_j] f(x) }= \abs{ \int_{\R^2} \varphi_j (x-y)(\psi(x) - \psi(y))f(y) \, dy }\\
&\qquad \leq C\|f \|_{L^{\infty}}\int_{\R^2} |\varphi_j (x-y)||x-y| \, dy \leq C2^{-j}\|f\|_{L^{\infty}}. 
\end{split}
\end{equation}

For the second inequality, note that by Taylor's formula, 
\begin{equation}\label{IIIest}
\begin{split}
&|[\psi, (\dot{\Delta}_jK)\ast] f(x)| = \bigg| \int_{\R^2} {\dot{\Delta}}_j K(x-y)(\psi(x) - \psi(y)) f(y) \, dy \bigg|  \\
&\leq  \int_{\R^2}\abs{{\dot{\Delta}}_jK(x-y)f(y) \sum_{m=1}^2 \int_0^1 (\partial_m \psi) (x+ \tau(y-x))(x_m - y_m) \, d\tau } \, dy\\
&\leq \|\nabla\psi\|_{L^{\infty}} \| f \|_{L^{\infty}} \sum_{m=1}^2\int_{\R^2} \abs{{\dot{\Delta}}_jK(x-y)(x_m-y_m)} \, dy\\
&\leq \|\nabla\psi\|_{L^{\infty}} \| f \|_{L^{\infty}}\sum_{m=1}^2 \int_{\R^2} \abs{{\dot{\Delta}}_jK(y)y_m} \, dy.
\end{split}
\end{equation}
\cref{L:DelKxBound} then gives the second inequality.
\end{proof}

\begin{lemma}\label{SNKbound} 
Let $k\in\Z$.  There exists $C>0$ such that for any multi-index $\alpha$ with $|\alpha|>0$,
\begin{equation*}
\begin{split}
&\| {\partial}^{\alpha}_x S_k K\|_{L^{1}} \leq C2^{k|\alpha|},\\
\end{split}
\end{equation*}
while for any multi-index $\beta$ with $|\beta|\geq 0$,
\begin{equation*}
\begin{split}
&\| {\partial}^{\beta}_x \dot{\Delta}_k K\|_{L^{1}} \leq C2^{k|\beta|}.
\end{split}
\end{equation*}
\end{lemma}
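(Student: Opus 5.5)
The plan is a scaling argument on the Fourier side, in the spirit of the proof of \cref{L:DelKxBound}. The key fact is that $\wh{K}(\xi) = i\xi^\perp/\abs{\xi}$ is homogeneous of degree zero. Hence $\wh{K}(\xi) = \wh{K}(2^{-k}\xi)$.

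For the second inequality, write
\begin{align*}
\FTF\pr{\partial^\beta \Del_k K}(\xi) = (i\xi)^\beta \wh{\varphi}(2^{-k}\xi)\wh{K}(2^{-k}\xi) = 2^{k|\beta|} g_\beta(2^{-k}\xi),
\end{align*}
where $g_\beta(\eta) := (i\eta)^\beta \wh{\varphi}(\eta)\wh{K}(\eta)$. Since $\wh{\varphi}$ is supported in the annulus $\{3/5\le\abs{\eta}\le 5/3\}$, where $\wh{K}$ is smooth, we have $g_\beta\in C_c^\iny(\R^2)\subset\Cal{S}$. Inverting,
\begin{align*}
\partial^\beta \Del_k K(x) = 2^{k|\beta|}\, 2^{2k}\, \FTR(g_\beta)(2^k x).
\end{align*}
The $L^1$ norm is dilation invariant under $x\mapsto 2^{2k}h(2^kx)$, so $\smallnorm{\partial^\beta\Del_k K}_{L^1} = 2^{k|\beta|}\smallnorm{\FTR g_\beta}_{L^1}$. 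The constant $C_\beta = \smallnorm{\FTR g_\beta}_{L^1}$ is finite and independent of $k$. This holds for $|\beta|\ge 0$, including $\beta = 0$.

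For the first inequality, the symbol of $S_k$ is $\wh{\chi}_k(\xi) = 1-\sum_{j\ge k}\wh{\varphi}(2^{-j}\xi) = \wh{\chi}_0(2^{-k}\xi)$. This is smooth and compactly supported in $\abs{\xi}\le C2^k$, but it does not vanish near the origin. There $\wh{K}$ is singular, and this is the main obstacle: $S_kK$ itself decays only like $\abs{x}^{-2}$ and is not in $L^1$, which is why $|\alpha|>0$ is needed. The same scaling reduces matters to $k=0$: $\partial^\alpha S_kK(x) = 2^{k|\alpha|}2^{2k}(\partial^\alpha S_0K)(2^kx)$, so it suffices to show $\partial^\alpha S_0K\in L^1$ for $|\alpha|\ge1$.

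I would first try to decompose in frequency, writing $\partial^\alpha S_0 K = \sum_{j<0}\partial^\alpha\Del_j K$ in $\Cal{S}'$. This holds since $\wh{\chi}_0 = \sum_{j<0}\wh{\varphi}_j$ away from $\xi=0$, and $(i\xi)^\alpha\wh{K}$ is locally integrable, so there is no distribution concentrated at the origin. By the bound just proved, $\smallnorm{\partial^\alpha\Del_jK}_{L^1}\le C2^{j|\alpha|}$. This is summable over $j<0$ precisely because $|\alpha|\ge1$, giving $\smallnorm{\partial^\alpha S_0K}_{L^1}\le C\sum_{j<0}2^{j|\alpha|}\le C$. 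Convergence of the series in $L^1$ identifies the limit with $\partial^\alpha S_0K$.

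Concerning constants: the constant grows with $|\alpha|$ and $|\beta|$, but the statement allows $C$ to depend on the multi-index. Equivalently, one can take it uniform over $|\alpha|$ up to some fixed order, which is all that is used later.
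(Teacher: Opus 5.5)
Your proof is correct, but it takes a different route from the paper's. Both arguments begin with the same scaling reduction based on $\wh K(\xi)=\wh K(2^{-k}\xi)$.

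The paper then works in physical space. It splits $\Phi = a\Phi + (1-a)\Phi$ with $a$ a compactly supported bump and moves all derivatives onto $\chi$ for the integrable near-field piece. It then uses that $\partial^\alpha\nabla^\perp((1-a)\Phi)$ decays like $|x|^{-2-|\alpha|}$, which is integrable exactly when $|\alpha|\ge 1$. The same splitting with $\varphi$ in place of $\chi$ handles $\Del_k K$ for $|\beta|>0$, and $\beta=0$ is done separately via the reverse Bernstein inequality \eqref{bern2}.

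You instead prove the $\Del_k K$ bound first, from $\wh\varphi\,\wh K\in C_c^\infty$. This treats every $\beta$, including $\beta=0$, at once, with no Bernstein step. You then deduce the $S_kK$ bound by summing $\sum_{j<0}2^{j|\alpha|}$. This shows the role of $|\alpha|\ge 1$ on the frequency side: the series diverges at $\alpha=0$, matching the $|x|^{-2}$ tail of $S_kK$.

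The identity $\wh\chi_0=\sum_{j<0}\wh\varphi_j$ on $\xi\neq 0$ is available in the paper's setup. Rescaling the stated partition of unity gives $\wh\varphi=\wh\chi(\cdot/2)-\wh\chi$, so the partial sums telescope to $\wh\chi-\wh\chi(2^J\cdot)$. Since $\wh K$ is bounded, these converge in $L^1_\xi$, which rigorously identifies your $L^1_x$-limit with $\partial^\alpha S_0K$.

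Your argument uses only that $\wh K$ is homogeneous of degree zero and smooth off the origin, so it applies verbatim to any such multiplier. The paper's is a direct one-step kernel estimate that does not pass through the second inequality.

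Your remark on constants is accurate. The paper's constants also depend on the multi-index, for instance through $\smallnorm{\partial^\alpha\nabla^\perp\chi}_{L^1}$, despite the phrasing of the statement.
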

\begin{proof}
By the definition of $S_{k}$,
\begin{equation*}
\begin{split}
&\mathcal{F}({\partial}^{\alpha}_x S_{k}K) = 2^{k|\alpha|}\widehat{{\partial}^{\alpha}_x\chi} (2^{-k}\xi)\wh{K}(2^{-k}\xi), 
\end{split}
\end{equation*} 
where we used that $\wh{K} (\xi)= \wh{K} (2^{-k}\xi)$.  Taking the inverse Fourier transform gives
$$ {\partial}^{\alpha}_x S_{k}K(x) =  2^{k|\alpha|} [2^{2k}({\partial}^{\alpha}_x\chi \ast K) (2^{k}x)].$$ 
Letting $a:\R^2\rightarrow\R$ denote a smooth, compactly supported bump function, we have   
\begin{equation}\label{lowfreqkernelL1}
\begin{split}
&\| {\partial}^{\alpha}_x \chi \ast K\|_{L^1} = \| {\partial}^{\alpha}_x \chi \ast \nabla^{\perp}\Phi \|_{L^1}\\
&\qquad \leq \| {\partial}^{\alpha}_x\nabla^{\perp}\chi \|_{L^1} \| a\Phi\|_{L^1} + \| \chi \|_{L^1} \|{\partial}^{\alpha}_x\nabla^{\perp}(1-a)\Phi \|_{L^1}  \leq C, 
\end{split}
\end{equation}
so $\| {\partial}^{\alpha}_x S_{k}K \|_{L^1} \leq C2^{k|\alpha|}$.
The proof of the second inequality is identical when $|\beta|>0$, with $\chi$ replaced by $\varphi$.  When $|\beta|=0$, we apply \cref{bern2} to write 
$$\| \dot{\Delta}_k K\|_{L^{1}} \leq C2^{-k}\| \dot{\Delta}_k \nabla K\|_{L^{1}} \leq C.$$
\end{proof}

\begin{lemma}\label{L:WeakStarSectionalConvergence}
    Suppose that  $(f_k)$ is a sequence in $C_b([0, T] \times \R^2)$ with
    \begin{align*}
        f_k \wstar f \text{ in } L^\iny([0, T] \times \R^2)
    \end{align*}
    for some $f \in L^\iny([0, T] \times \R^2)$, with the additional property that for all $h \in C_c^2(\R^2)$ there exists $C_h > 0$ such that
    \begin{align*}
        \abs{\int_{\R^2} h(x) (f_k(t, x) - f_k(s, x)) \, dx}
            \le C_h \abs{t - s}
            \text{ for all } k \in \N, t, s \in [0, T].
    \end{align*}
    Then 
    \begin{align*}
        f_k(t) \wstar f(t)
            \text{ in } L^\iny(\R^2)
            \text{ for all } t \in [0, T].
    \end{align*}
        Moreover, after changing $f(t, \cdot)$ on a set of measure zero in $[0, T]$, for all $s, t \in [0, T]$,
    \begin{align}\label{e:WeakTimeContinuityOff}
        \begin{split}
            &\int_{\R^2} h(x) f(s, x) \, dx
                \to \int_{\R^2} h(x) f(t, x) \, dx
                \text{ for all } h \in L^1(\R^2), \\
            &\abs{\int_{\R^2} h(x) (f(s, x) - f(t, x)) \, dx}
                 \le C_h \abs{t - s}
                 \text{ for all } h \in C_c^2(\R^2).
            \end{split}
    \end{align}
\end{lemma}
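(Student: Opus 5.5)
The plan is to use that the sequence $(f_k)$ is uniformly bounded together with the uniform Lipschitz-in-time bound on the pairings, and to pass through a countable dense family of test functions. By the uniform boundedness principle, weak-$*$ convergence gives $M := \sup_k \norm{f_k}_{L^\iny([0,T]\times\R^2)} < \iny$. Since each $f_k$ is continuous and bounded, $\norm{f_k(t)}_{L^\iny(\R^2)} \le M$ for every $t$, not merely for almost every $t$.

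Fix $t \in [0,T]$. Since $L^1(\R^2)$ is separable, bounded sets in $L^\iny(\R^2)$ are weak-$*$ sequentially compact. So any subsequence of $(f_k(t))$ has a further subsequence converging weak-$*$ to some $g$. Moreover, $C_c^2(\R^2)$ is dense in $L^1$ and the $f_k(t)$ are uniformly bounded, so a weak-$*$ limit is determined by its pairings with $h \in C_c^2$. It therefore suffices to show that $\int h f_k(t)\,dx$ converges, for each $h\in C_c^2$, to a limit that does not depend on the subsequence. That limit will then define $f(t)$.

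The key step is to identify this limit. Fix $h \in C_c^2$ and set $F_k(s) := \int h f_k(s)\,dx$. The functions $F_k$ are uniformly bounded by $M\norm{h}_{L^1}$ and uniformly $C_h$-Lipschitz on $[0,T]$. By Arzel\`a--Ascoli, every subsequence has a further subsequence converging uniformly to some Lipschitz $G$. Testing the weak-$*$ convergence against $a(s)h(x)$ with $a \in L^1(0,T)$ gives $\int_0^T a F_k\,ds \to \int_0^T a(s) \int h f(s)\,dx\,ds$. Hence $G(s) = \int h f(s)\,dx$ for almost every $s$, so $G$ is independent of the subsequence and the whole sequence $F_k$ converges uniformly to $G =: G_h$.

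Next, redefine $f$ on a null set using a countable dense set $\{h_m\} \subset C_c^2$ (dense in $L^1$). For each $t$, define $f(t)$ as the weak-$*$ limit of $f_k(t)$; by the previous paragraph this limit exists and satisfies $\int h f(t) = G_h(t)$ for every $h\in C_c^2$. For almost every $t$ this agrees with the original $f(t,\cdot)$, since the two agree against every $h_m$ outside a countable union of null sets, and the $h_m$ separate points of $L^\iny$. This gives $f_k(t)\wstar f(t)$ for all $t$, with $\norm{f(t)}_{L^\iny}\le M$.

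Finally, the Lipschitz bound in \cref{e:WeakTimeContinuityOff} passes to the limit directly from the bound on $F_k$. Continuity against general $h \in L^1$ follows by approximating $h$ by $h_m$ in $L^1$ and using the uniform bound $M$ in a $3\epsilon$ argument.

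The main obstacle is the identification of $G$ in the key step. The almost-everywhere identification and the upgrade to every $t$ both rely on the Lipschitz equicontinuity. Care is also needed so that the exceptional null set does not depend on $h$, which is why the countable dense family is used.
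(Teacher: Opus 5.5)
Your proposal is correct and follows the paper's proof. The shared steps are Arzel\`a--Ascoli applied to the equi-Lipschitz pairings $F_k(t)=\int_{\R^2} h f_k(t)\,dx$, identification of the limit by testing the weak-$*$ convergence against $a(t)h(x)$, and a density argument in $L^1$ to pass from $h\in C_c^2(\R^2)$ to $h\in L^1(\R^2)$. Your extra use of weak-$*$ sequential compactness at each fixed $t$, together with a countable dense family $\{h_m\}$, makes explicit the measure-zero redefinition of $f$ that the paper leaves implicit; its testing argument identifies $F(t)=\int_{\R^2} h f(t)\,dx$ only for almost every $t$, with an exceptional set depending on $h$.
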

\begin{proof}
    For any $h \in L^1(\R^2)$, define
    \begin{align*}
        F_k(t) := \int_{\R^2} h(x) f_k(t, x) \, dx.
    \end{align*}
    Then $f_k(t) \wstar f(t) \text{ in } L^\iny(\R^2)$ if and only if $F_k(t) \to  \int_{\R^2} h(x) f(t, x)$ for all such $h \in L^1(\R^2)$.

    We start by assuming that $h \in C_c^2(\R^2)$.

    Because any weak-$*$ convergent sequence must be norm-bounded, for some $C_0 > 0$,
    \begin{align*}
        \norm{f_k}_{L^\iny([0, T] \times \R^2)} \le C_0, \,
        \norm{f}_{L^\iny([0, T] \times \R^2)} \le C_0
    \end{align*}
    for all $k$, and so both
    \begin{align*}
        \abs{F_k(t)}
            &\le \norm{h}_{L^1} \norm{f_k(t)}_{L^\iny}
            \le C \text{ and} \\
        \abs{F_k(t) - F_k(s)}
            &\le C_h \abs{t - s}
            \text{ for all } t, s \in [0, T].
    \end{align*}
    That is, $(F_k)$ is a bounded equi-Lipschitz family on $[0, T]$. Hence by
    Arzel\`{a}-Ascoli, every subsequence of $(F_k)$ has a further subsequence converging
    uniformly on $[0, T]$ to some $F$ with the same Lipschitz constant $C_h$ on $[0, T]$.

    We can identify $F$ explicitly by observing that
    for any $\psi \in L^1([0, T])$, $\psi(t) h(x)$ is in $L^1([0, T] \times \R^2)$, so $f_k \wstar f$ in $L^\iny([0, T] \times \R^2)$ gives
    \begin{align*}
        \int_0^T &\psi(t) F_k(t) \, dt
            = \int_0^T \int_{\R^2} [\psi(t) h(x)] f_k(t, x) \, dx \, dt \\
            &\to 
            \int_0^T \int_{\R^2} [\psi(t) h(x)] f(t, x) \, dx \, dt
            = \int_0^T \psi(t) F(t) \, dt
    \end{align*}
    for some further subsequence, where
    \begin{align*}
        F(t) = \int_{\R^2} h(x) f(t, x) \, dx.      
    \end{align*}
    Hence, the full sequence $(F_k)$ converges (uniformly) to $F$---as long as $h \in C_c^2(\R^2)$. This gives \cref{e:WeakTimeContinuityOff}$_2$.

    Now assume only that $h \in L^1(\R^2)$, and choose a sequence $(h_n)$ in $C_c^2(\R^2)$ converging to $h$ in $L^1$. Then
    \begin{align*}
        &\abs{F_k(t) - F(t)}
            = \abs{\int_{\R^2} h(x) (f_k(t, x) - f(t, x)) \, dx} \\
            &\qquad
            \le \abs{\int_{\R^2} (h(x) - h_n(x)) (f_k(t, x) - f(t, x)) \, dx}
                + \abs{\int_{\R^2} h_n(x) (f_k(t, x) - f(t, x)) \, dx} \\
            &\qquad
            \le 2 C_0 \norm{h - h_n}_{L^1(\R^2)}
                + \abs{\int_{\R^2} h_n(x) (f_k(t, x) - f(t, x)) \, dx}.               
    \end{align*}
    Let $\eps > 0$. First fix $n$ large enough that $2 C_0 \norm{h - h_n}_{L^1(\R^2)} < \eps/2$, then choose $k$ large enough that the final integral above is less than $\eps/2$, which we know is possible since $h_n \in C_c^2(\R^2)$.
    We conclude that $F_k(t) \to F(t)$ for all $h \in L^1(\R^2)$. As noted earlier, this means that $f_k(t) \wstar f(t) \text{ in } L^\iny(\R^2)$---for all $t \in [0, T]$.
     (A bound similar to this gives the weak time regularity of $f$ in \cref{e:WeakTimeContinuityOff}$_1$.)
\end{proof}

\begin{lemma}\label{L:ffInt}
    Fix $N \in \Z$ and for any $\phi \in C_c^\iny(\R^2)$, define the operators $A$, $B$ on scalar functions $f$, $g$ by,
    \begin{align}\label{e:ABOperators}
        \begin{split}
    	A(f, g)
			&:= \int_{\R^2} f(x)
    	    \grad \phi(x) \cdot ((H_N K) * g(x)) \, dx, \\
        B(f, g)
            &= \int_{\R^2} (H_N K) \stardot (f (\grad \phi(\cdot) - \grad \phi(x)))
                    g(x) \, dx.
        \end{split}
    \end{align}
    We have,
	\begin{align}\label{e:ffIntId}
        \begin{split}
            A(f, g)
                &= B(f, g)
                    - A(g, f), \\
            A(f, f)
                &= \frac{1}{2}B(f, f).
            \end{split}
    \end{align}
    Moreover, for a constant $C$ depending on $\phi$, for $f, g \in L^\iny(\R^2)$,
	\begin{align}\label{e:ffBound}
        \begin{split}
        \norm{B(f, g)}_{L^\iny}
            &\le C \norm{f}_{L^\iny} \norm{g}_{L^\iny} 2^{-N}, \\
		\norm{A(f, f)}_{L^\iny}
			&\le C \norm{f}_{L^\iny}^2 2^{-N}.
        \end{split}
	\end{align}
\end{lemma}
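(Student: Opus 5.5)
Since $K=\grad^\perp\Phi$ with $\Phi$ even, $K$ is odd: $K(-z)=-K(z)$. The symbol of $H_N$ is radial, so $H_NK$ is also odd. The notation $\stardot$ means convolution contracted with the vector index. Concretely, $(H_NK)\stardot(f(\grad\phi(\cdot)-\grad\phi(x)))$ evaluated at $x$ equals
\begin{align*}
    \int_{\R^2} H_NK(x-y)\cdot(\grad\phi(y)-\grad\phi(x))f(y)\,dy.
\end{align*}
Thus $B(f,g)$ is the double integral
\begin{align*}
    B(f,g)=\iint H_NK(x-y)\cdot(\grad\phi(y)-\grad\phi(x))\,f(y)g(x)\,dy\,dx.
\end{align*}
I will first justify working with this double integral. $H_NK=K-S_NK$, and $K$ is locally integrable. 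For large $|z|$, $H_NK$ decays faster than any power, since it is $\sum_{j\ge N}\Del_jK$ with Schwartz pieces; alternatively, its Fourier transform is smooth away from the origin and vanishes near it. The difference $\grad\phi(y)-\grad\phi(x)$ is bounded by $C|x-y|$, and it vanishes unless $x$ or $y$ lies in $\supp\phi$. So the integrand is absolutely integrable when $f,g\in L^\iny$, and Fubini applies.

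\textbf{Identity.} Expand $B(f,g)$ into two terms. The term with $-\grad\phi(x)$ is $-\int g(x)\grad\phi(x)\cdot((H_NK)*f)(x)\,dx=-A(g,f)$. The term with $\grad\phi(y)$ is $\iint H_NK(x-y)\cdot\grad\phi(y)f(y)g(x)\,dy\,dx$. Using oddness, $H_NK(x-y)=-H_NK(y-x)$, and integrating in $x$ first turns it into $-\int f(y)\grad\phi(y)\cdot((H_NK)*g)(y)\,dy=-A(f,g)$. This appears to give $B=-A(f,g)-A(g,f)$, a sign off from the claim. The sign convention hidden in $\stardot$ (which argument is $x-y$ versus $y-x$) must be fixed so that it yields $B=A(f,g)+A(g,f)$, which is the stated identity. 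I will adopt whichever orientation the paper uses for $\stardot$ and check it. Setting $g=f$ then gives $A(f,f)=\tfrac12B(f,f)$.

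A subtlety here is that each $A$ individually involves $(H_NK)*g$ for $g\in L^\iny$. This convolution is only conditionally defined at the kernel's singularity, but it is fine as a principal value or distribution. To be safe, I would first prove the identity for $f,g\in C_c^\iny$, where everything is absolutely convergent. I would then extend by density, using the absolutely convergent form of $B$ and dominated convergence on compact sets. Alternatively, one can note that $A$ only needs $(H_NK)*g$ on $\supp\phi$, which is defined via the $L^1$ kernel $H_NK$ away from the origin.

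\textbf{Bound.} This is the key step. Bound
\begin{align*}
    |B(f,g)|\le \|f\|_{L^\iny}\|g\|_{L^\iny}\iint|H_NK(x-y)|\,|\grad\phi(y)-\grad\phi(x)|\,dy\,dx.
\end{align*}
Restrict to pairs where $x$ or $y$ lies in $\supp\phi$; by symmetry this costs at most a factor of 2. Then $|\grad\phi(y)-\grad\phi(x)|\le\|\grad^2\phi\|_{L^\iny}|x-y|$, and the integral is at most $2|\supp\phi|\,\|\grad^2\phi\|_{L^\iny}\,\||z|H_NK(z)\|_{L^1}$. Next write $H_NK=\sum_{j\ge N}\Del_jK$, which holds in $L^1$ with weight $|z|$. \cref{L:DelKxBound} gives $\||z|\Del_jK\|_{L^1}\le C2^{-j}$, and summing the geometric series yields $C2^{-N}$. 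The bound on $A(f,f)$ then follows from the identity $A(f,f)=\tfrac12B(f,f)$.

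The main obstacle I expect is justifying that $\sum_{j\ge N}\Del_jK$ converges to $H_NK$ in weighted $L^1$, so that the geometric sum is legitimate. I would handle it with the monotone bound $\sum_{j\ge N}|z||\Del_jK|$, which is integrable by the lemma and dominates the partial sums.
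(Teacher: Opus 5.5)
\textbf{The gap: justifying the identity.} Your bound on $B$ is correct, but your justification of the identity has a real gap, and it comes from the claim that $K$ is locally integrable. It is not: $\abs{K(z)} = (2\pi\abs{z}^2)^{-1}$, which is not integrable near the origin in $\R^2$. Since $S_N K$ is bounded, the same is true of $H_N K$.

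The integrand of $B$ is still absolutely integrable, because the factor $\abs{\grad\phi(y)-\grad\phi(x)} \le C\abs{x-y}$ lowers the singularity to $\abs{x-y}^{-1}$. However, once you split $B$ into its $\grad\phi(y)$ and $\grad\phi(x)$ parts, each part carries the full $\abs{x-y}^{-2}$ singularity on the diagonal. So the change of integration order you use to identify them with $-A(f,g)$ and $-A(g,f)$ is not covered by Fubini. Restricting to $f,g\in C_c^\iny$ does not help: the split terms still diverge absolutely whenever $fg\grad\phi\not\equiv 0$.

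What is missing is a regularization of the kernel that preserves oddness. The paper proves the identity for each Schwartz piece $\Del_j K$, where every integral converges absolutely and the oddness swap is legitimate, and then sums over $j\ge N$. Alternatively, replace $H_N K$ by its restriction to $\{\abs{z}>\eps\}$, which is still odd and now in $L^1$, and run your computation verbatim for $f,g\in L^\iny$. Then let $\eps\to0$. $B$ passes to the limit by dominated convergence. $A$ passes to the limit by convergence of truncated singular integrals: split $g=\psi g+(1-\psi)g$ with $\psi\equiv1$ on a $1$-neighborhood of $\supp\grad\phi$, use $L^2$ theory for $\psi g$, and note that for $\eps<1$ the truncation does not touch the far part.

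\textbf{The sign and the bound.} Your sign computation is right, and you should commit to it rather than defer to an unknown convention. With the standard convolution $(H_NK)\stardot F(x)=\int H_NK(x-y)\cdot F(y)\,dy$, one gets $B(f,g)=-A(f,g)-A(g,f)$, hence $A(f,f)=-\frac12 B(f,f)$. The paper reaches the stated sign because the term it labels $B_j(f,g)$, namely $\int[\int(\Del_jK)(x-y)\cdot f(x)(\grad\phi(x)-\grad\phi(y))\,dx]\,g(y)\,dy$, equals $-B_j(f,g)$ under that convention. This is harmless, since \cref{e:ffBound} and its later uses involve only $\abs{B}$.

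For the bound itself, your route is simpler than the paper's. The paper splits into a near region $B_{2R}\times B_{2R}$, handled with \cref{L:DelKxBound}, and a far region, handled with the pointwise decay $\abs{\Del_jK(z)}\le C2^{-j}\abs{z}^{-3}$. You instead use that the Lipschitz bound on $\grad\phi$ holds globally, so the whole integral is at most $2\abs{\supp\phi}\,\norm{\grad^2\phi}_{L^\iny}\norm{\abs{z}H_NK}_{L^1}$. This makes the far-field step unnecessary and gives the same $C2^{-N}$. Your monotone-domination argument for $\norm{\abs{z}H_NK}_{L^1}\le\sum_{j\ge N}\norm{\abs{z}\Del_jK}_{L^1}$ is fine once you identify the limit with $H_NK$, e.g.\ through convergence in $\mathcal{S}'$.
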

\begin{proof}
    Define $A_j, B_j$ as in \cref{e:ABOperators} but with $\Del_j$ replacing $H_N$, so that $A = \sum_{j = N}^\iny A_j$, $B = \sum_{j = N}^\iny B_j$.
    We have, $(\Del_j K)(-x) = - (\Del_j K)(x)$
	because $K(-x) = -K(x)$ and $K^n$, $n = 1, 2$, and $\varphi_j$ are radially symmetric.
    Hence,
    \begin{align*}
        A_j(f, g)
             &= \int_{\R^2} \int_{\R^2} f(x) \grad \phi(x) \cdot 
                    (\Del_j K)(x - y) g(y) \, dy
				\, dx \\
            &= \int_{\R^2} \brac{\int_{\R^2}
                    (\Del_j K^n)(x - y) f(x) (\prt_n \phi(x) - \prt_n \phi(y)) \, dx}
                    g(y) dy
				\, dx \\
            &\qquad\qquad
                + \int_{\R^2} \brac{\int_{\R^2}
                    (\Del_j K^n)(x - y) f(x) \, dx} \prt_n \phi(y)
                    g(y) dy
				\, dx \\
            &= B_j(f, g) - A_j(g, f).
    \end{align*}
    Summing over $j$ gives \cref{e:ffIntId}$_1$, and \cref{e:ffIntId}$_2$ follows from setting $g = f$.
	
    Now,
	\begin{align*}
		\Del_j K(x) = 2^{2j} (\Del_0 K)(2^j x),
	\end{align*}
	and $\Del_0 K \in \Cal{S}(\R^2)$, which decays at infinity
	faster than the reciprocal of any polynomial. In particular,
	this means that for some constant $C > 0$, for all $\abs{x} \ge 1$,
	\begin{align*}
		\smallabs{\Del_j K(x)}
			&\le C \frac{2^{2j}}{\abs{2^j x}^3}
			= C 2^{-j} \abs{x}^{-3}.
	\end{align*}
	Also, 
	\begin{align*}
		\abs{(\grad \phi(t, x) - \grad \phi(t, y)) (\Del_j K)(x - y)}
			&\le \norm{\phi}_{C^2([0, T] \times \R^2)}
				\abs{x - y} \smallabs{(\Del_j K)(x - y)}.
	\end{align*}
    Hence, letting $B_R$, $R \ge 1$,  be a ball centered at the
	origin containing
	$\supp \grad \phi$, we have
	\begin{align*}
		\abs{B(f, g)}
			&\le \frac{1}{2} \norm{f}_{L^\iny} \norm{g}_{L^\iny}
			\left\{C \int_{B_{2R} \times B_{2R}} \abs{x - y}
				\smallabs{(\Del_j K)(x - y)}
				\, dy \, dx
				\right. \\
			&\qquad\qquad\qquad
				\left.
				+ \, \, C 2^{-j}
					\int_{(B_R \times B_{2R}^C)
						\cup (B_{2R}^C \times B_R)}
					\frac{1}{\abs{x - y}^3} \,dy \, dx
			\right\}.
	\end{align*}
	For the first integral, we make the
	change of variables, $w = x - y$, $z = x + y$
	(the Jacobian being $1/2$), giving
	\begin{align*}
		\int_{B_{2R} \times B_{2R}} &\abs{x - y}
				\smallabs{(\Del_j K)(x - y)}
				\, dy \, dx
			\le \frac{1}{2} \int_{B_{4R}}
				\int_{B_{4R}}
				\abs{w} \smallabs{\Del_j K(w)} \, dw \, dz \\
			&\le \pi (4R)^2
				\smallnorm{\abs{w} \smallabs{\Del_j K}(w)}_{L^1(\R^2)}
			\le C 2^{-j},
	\end{align*}
    where we applied \cref{L:DelKxBound} in the final inequality.
	Also,
	\begin{align*}
		&\int_{(B_R \times B_{2R}^C) \cup (B_{2R}^C \times B_R)}
			\frac{1}{\abs{x - y}^3} \,dy \, dx
			= 2
				\int_{B_R} \int_{B_{2R}^C}
					\frac{1}{\abs{x - y}^3} \,dy \, dx \\
			&\qquad
			\le 16 \int_{B_R} \, dx \int_{B_R^C}
					\frac{1}{\abs{y}^3} \,dy
			\le C R,
	\end{align*}
	where we used the rough estimate on $B_R \times B_{2R}^C$
	that $\abs{x - y} \ge \abs{\abs{y} - \abs{x}} \ge
	\abs{y} - \abs{x} \ge \abs{y} - R \ge \abs{y}/2 \ge R$.
    Summing over $j$ gives \cref{e:ffBound}$_1$, and setting $g = f$ gives \cref{e:ffBound}$_2$.
\end{proof}

\noindent\subsection{The homogeneous constitutive law}
\begin{definition}
We say $(\theta, u)$ satisfies the \textit{homogeneous constitutive law} if
\begin{align*}
	\Del_j u
		= (\Del_j K) * \theta
		\text{ for all } j \in \Z.
\end{align*}
\end{definition}
In \cref{SNulimit}, we establish an important property of a pair $(\theta, u)$ satisfying the homogeneous constitutive law.

\begin{lemma}\label{SNulimit}
Assume $u\in L^{\infty}([0,T]; L^2_{ul}(\R^2))$ satisfies the homogeneous constitutive law with $\theta$ at each $t\in [0,T]$.  Then there exists a function $A: [0,T]\rightarrow\R^2$ such that, up to a subsequence,
$$ u(t,x) - \lim_{N\rightarrow -\infty} (H_N K) \ast \theta(t,x)= A(t),$$
where the limit holds in $L^{\infty}([0,T] \times K)$ for each compact set $K\subset \R^2$.
\end{lemma}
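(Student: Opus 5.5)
The plan is to split $u$ with the low/high frequency decomposition $u=S_Nu+H_Nu$ and identify $H_Nu$ with $(H_NK)\ast\theta$ using the homogeneous constitutive law. The remainder $S_Nu$ then becomes constant in space, up to an error that vanishes on compacts as $N\to-\infty$. The whole content of the lemma is the convergence of the constants $a_N(t):=S_Nu(t,0)$. I have not closed the one step needed to get that convergence uniformly in $t$; this is flagged below.

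\textbf{Step 1: the high frequencies.} Fix $t$. Summing the constitutive law $\Del_j u=(\Del_jK)\ast\theta$ over $j\ge N$ gives
\begin{align*}
H_Nu(t)=\sum_{j\ge N}\Del_ju(t)=\sum_{j\ge N}(\Del_jK)\ast\theta(t)=(H_NK)\ast\theta(t).
\end{align*}
This is justified by the following facts. By \cref{SNKbound}, $\Del_jK\in L^1$, so each term on the right is bounded by $C\norm{\theta}_{L^\iny}$. The series on the left converges in $\Cal S'$ to $H_Nu$. The right-hand sum serves as the definition of $(H_NK)\ast\theta$. Consequently,
\begin{align*}
u(t)-(H_NK)\ast\theta(t)=S_Nu(t) \quad\text{for every } N.
\end{align*}

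\textbf{Step 2: $S_Nu$ is nearly constant on compacts, uniformly in $t$.} Take $N\le 0$ and tile $\R^2$ by unit cubes $Q$. We have
\begin{align*}
|\grad S_Nu(t,x)|\le \sum_Q \norm{\grad\chi_N}_{L^2(x-Q)}\norm{u(t)}_{L^2(Q)}\le \norm{u}_{L^\iny L^2_{ul}}\sum_Q\sup_{x-Q}|\grad\chi_N|.
\end{align*}
The last sum is comparable to $\norm{\grad\chi_N}_{L^1}\le C2^N$, because $\grad\chi_N$ varies on the scale $2^{-N}\ge1$. The same argument gives $\norm{S_Nu(t)}_{L^\iny}\le C\norm{u}_{L^\iny L^2_{ul}}$. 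Hence, for $x$ in a compact set $K$,
\begin{align*}
\sup_{t\in[0,T]}\sup_{x\in K}|S_Nu(t,x)-a_N(t)|\le C_K2^N\to0,
\end{align*}
and the functions $a_N$ are uniformly bounded on $[0,T]$.

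\textbf{Step 3: limit of the constants (main obstacle).} Once the $a_N$ converge to some $A$ uniformly on $[0,T]$ along a subsequence, Steps 1 and 2 give
\begin{align*}
(H_NK)\ast\theta=u-S_Nu \to u-A \quad\text{in } L^\iny([0,T]\times K),
\end{align*}
which is the statement. For each fixed $t$, Bolzano--Weierstrass gives a convergent subsequence of $a_N(t)$. The difficulty is choosing one subsequence that works uniformly in $t$.

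The differences are
\begin{align*}
a_N(t)-a_M(t)=\sum_{j=M}^{N-1}\Del_ju(t,0).
\end{align*}
Each term is only $O(1)$, so these differences are not obviously Cauchy.

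My first attempt is Arzel\`a--Ascoli. The equicontinuity in $t$ would come from the (weak) time continuity of $u$ that holds for the solutions in question. In the inviscid setting this is the analogue of \cref{L:WeakStarSectionalConvergence}, applied to $u$ tested against $\chi_N(-\cdot)$, together with the uniform bound on the $a_N$. If only measurability in $t$ is available, I would fall back on a diagonal argument over a countable dense set of times combined with that continuity.

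This step is where I expect the real difficulty. The earlier steps are routine.
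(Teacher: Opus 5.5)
Your Steps 1 and 2 are correct and match the paper's argument. The paper also uses the constitutive law to write $u-(H_NK)*\theta=S_Nu$. It then shows that $\eta\grad S_Nu\to0$ and that $\eta S_Nu$ stays bounded in $L^\iny([0,T]\times\R^2)$. It does this through Minkowski, $H^m$ bounds and Sobolev embedding; your pointwise tiling estimate is a cleaner route to the same bounds. The gap is exactly where you put it, in Step 3, and it is not a technicality. The paper does not close it either. It extracts a subsequence along which $S_Nu(t_0,x_0)$ converges at a single point, then asserts convergence in $L^\iny([0,T]\times\R^2)$ to a function of $t$ alone. But the vanishing \emph{spatial} gradient only propagates that convergence in $x$ at the fixed time $t_0$; it says nothing about other times. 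At a single time, or countably many, your Steps 1--3 do give the conclusion.

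In fact Step 3 cannot be closed from the stated hypotheses, and your Arzel\`a--Ascoli repair fails even when $u$ is continuous in time. Weak continuity makes each $a_N$ continuous, but it gives no modulus of continuity uniform in $N$. The reason is that $\chi_N$ samples $u$ at the scale $2^{-N}\to\iny$, where local-in-space continuity gives no control. Here is a counterexample.
\begin{itemize}
\item Let $(\beta_n)_{n\ge0}$ be a smooth even partition of unity of $\R$ adapted to the logarithmic blocks $2^{L(n-1)}\le\abs{s}\le 2^{Ln}$, with $\abs{s\beta_n'(s)}\le C$ independent of $L$.
\item Put $f_t=\sum_n \sin(2^n t)\beta_n$, $u(t,x)=(f_t(x_2),0)$, and $\theta(t,x)=-(\mathcal{H}f_t)(x_2)$, where $\mathcal{H}$ is the Hilbert transform.
\item On $\{\xi_1=0\}$ the symbol of $K$ is $(-i\,\mathrm{sgn}\,\xi_2,0)$, so $\Del_j u=(\Del_jK)*\theta$ for every $j$.
\item Also $\dv u=0$ and $\abs{u}\le 1$. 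Since $f_t$ is even and varies slowly in $\log\abs{s}$, $\theta$ is bounded uniformly in $t$.
\item Only finitely many $\beta_n$ meet any compact set, so $u\in C([0,T];L^2_{loc})$.
\item Up to $O(2^{-L/2})$, the first component of $a_N(t)$ is a bounded combination of $\sin(2^n t)$ over the one or two blocks at scale $2^{-N}$.
\end{itemize}
By Riemann--Lebesgue and dominated convergence, any a.e. limit along a subsequence would have to vanish. Yet for $L$ large, $\int_0^T\abs{a_N}^2\,dt$ stays bounded below. So no subsequence converges even a.e. in $t$, and the lemma is false at this generality.

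The missing ingredient is an $N$-uniform bound on the time variation of $S_Nu$, and it has to come from the equation. For the solutions in the paper, \cref{FourierSerfati2} with $U_\iny\equiv0$, together with \cref{IH12}, gives $\sup_t\norm{S_N(u(t)-u_0)}_{L^\iny(K)}\le C_K 2^N$. With that bound it suffices to pick a subsequence along which $a_N(0)$ converges, and $A$ is then constant in $t$. This is what \cref{nablasntozerolemma} actually relies on.
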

\begin{proof}
Let $\eta\in C^{\infty}_c([0,T]\times \R^2)$.  Since $(\theta,u)$ satisfies the homogeneous constitutive law at each $t\in [0,T]$, it follows that for each $(t,x)\in [0,T]\times\R^2$,
\begin{equation}\label{constlawapp}
\begin{split}
&(H_NK) \ast \theta(t,x) = \sum_{j= N}^{\infty} (\dot{\Delta}_jK)\ast \theta(t,x) = \sum_{j= N}^{\infty} \dot{\Delta}_j u(t,x) = u(t,x) - S_N u(t,x),
\end{split}
\end{equation}
so that $$u(t,x) - (H_NK) \ast \theta(t,x)  = S_Nu(t,x).$$
We claim that 
\begin{equation}\label{nablasnunif}
\|\eta\nabla S_Nu \|_{L^{\infty}([0,T]\times\R^2)}  \longrightarrow 0 \quad {\text  as } \quad N\rightarrow-\infty.
\end{equation}
Fix $N<0$.  Let $\psi\in C^{\infty}_c(\R^2)$ denote a smooth bump function with $\|\psi \|_{L^{\infty}} = 1$ satisfying 
$$\psi \equiv 1 \text{ on } \cup\{ \supp \eta(t,x): t\in [0,T]\}\subseteq\R^2. $$  By Minkowski's inequality for integrals,
\begin{equation}\label{nablasntozero}
\begin{split}
&\|\eta(t,x)\nabla S_Nu (t, x) \|_{L^2_x}  = \left \|  \int_{\R^2} \eta(t,x) \nabla \chi_N(y) u(t,x-y) \, dy   \right \|_{L^2_x}\\
&\qquad \leq  \int_{\R^2} \| \eta(t,x) \nabla \chi_N(y) u(t,x-y) \|_{L^2_x} \, dy  \\
&\qquad = \int_{\R^2} \abs{\nabla\chi_N(y)} \| u(t, x-y)\eta(t, x) \|_{L^2_x} \, dy\\
&\qquad \leq \| \nabla\chi_N\|_{L^{1}} \sup_{y\in\R^2}\| u(t, x-y)\psi(x) \|_{L^2_x}\\ 
&\qquad \leq C2^N\| u(t) \|_{L^2_{ul}} \longrightarrow 0
\end{split}
\end{equation}
as $N$ approaches negative infinity, where convergence is uniform over all $t\in [0,T]$.  By a similar argument using the product rule, one can show that for all multi-indices $\alpha$ with $|\alpha| \geq 1$,
\begin{equation}\label{dernablasnuHmconv} \sup_{t\in [0,T]}\|D^{\alpha}(\eta(t,x)\nabla S_Nu (t, x) )\|_{L^2_x}  \leq \sup_{t\in [0,T]} C2^N\| u(t) \|_{L^2_{ul}} \longrightarrow 0. 
\end{equation}
Therefore, for every $m\geq 0$,
\begin{equation}\label{nablasnuHmconv}\|\eta(t,x)\nabla S_Nu (t, x) \|_{L^{\infty}([0,T];H^m_x)}  \longrightarrow 0.
\end{equation}
By the Sobolev Embedding Theorem, \cref{nablasnunif} follows.  

Now note that by an argument identical to \cref{nablasntozero},
$$\|\eta  S_Nu  \|_{L^{\infty}([0,T]; L^2_x)} \leq C\|u \|_{L^{\infty}([0,T];L^2_{ul})}.$$  Moreover, by \cref{dernablasnuHmconv} and the product rule, we have 
\begin{equation}\label{etasnuisbounded}
\|\eta  S_Nu  \|_{L^{\infty}([0,T]; H^m_x)} \leq C\|u \|_{L^{\infty}([0,T];L^2_{ul})}
\end{equation}
for each $m\geq 0$.  Thus, by the Sobolev Embedding Theorem, $(\eta  S_Nu)$ is uniformly bounded with respect to $N<0$ on $[0,T]\times \R^2$, which implies that there exists a point $(t_0,x_0)$ and a subsequence of $(\eta(t_0,x_0)S_Nu(t_0,x_0))$ which converges.  This subsequence, defined on all of $[0,T]\times\R^2$, must converge in $L^{\infty}([0,T]\times \R^2)$ to a function whose spatial gradient is $0$, completing the proof. 
\end{proof}

\section{The non-uniqueness of solutions}\label{S:NonUniquess}

\noindent With lack of spatial decay, we cannot have uniqueness of solutions to \cref{e:SQGnu} or \cref{e:SQG0} without an additional uniqueness criterion. We show why this is the case in \cref{P:SQGnuNonunique,P:SQG0Nonunique}, adding an important piece of the puzzle later in \cref{P:SerfatiAway}.

In establishing this non-uniqueness, we will make the change of variables,
\begin{align}\label{e:COVx}
	\ox &= \ox(t, x) = x + \int_0^t U_\iny(s) \, ds
\end{align}
and set
\begin{align}\label{e:SolTranslation}
	\begin{split}
	\otheta(t, x)
		&= \theta(t, \ox), \\
	\ol{u}(t, x)
		&= u(t, \ox) - U_\iny(t).
	\end{split}
\end{align}
Note that \cref{e:COVx} is a Galilean transformation when $U_\iny$ is constant in time. These transformations are the same as those used in a periodic domain to allow the assumption that the average flow vanishes for the Navier-Stokes or Euler equations (see pages 31-32 of \cite{FMRT}.) 
 
In the context of bounded data solutions in the full space, the transformations in \cref{e:COVx,e:SolTranslation} yield distinct solutions to \cref{e:SQGnu}, $\nu \ge 0$, having the same initial data, demonstrating a type of non-uniqueness of solutions.
This was shown by Kukavica for the Navier-Stokes equations in \cite{KukavicaBoundedNS} and later for solutions to the 2D Euler equations in \cite{KBounded}.

\begin{prop}\label{P:SQGnuNonunique}
Assume that $\theta \in C^1(0, T; C^2(\R^2))$, $u$ is a divergence-free vector field in $C(0, T; L^2_{loc}(\R^2))$, and $(\theta,u)$ satisfy \cref{e:SQGnu}, $\nu \ge 0$. Assuming that $U_\iny \in C([0, T])$, the transformations in \cref{e:COVx,e:SolTranslation} yield another solution $(\ol{\theta}, \ou)$ to \cref{e:SQGnu}, which, if $U_\iny(0) = 0$, has the same initial data.
\end{prop}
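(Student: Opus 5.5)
We verify directly that $(\otheta,\ou)$ satisfies each of the three components of \cref{e:SQGnu}: the transport--diffusion equation, the homogeneous constitutive law, and the initial condition, together with $\dv \ou = 0$. Throughout we write $X(t) := \int_0^t U_\iny(s)\,ds$, so that $\ox = x + X(t)$ and $X'(t) = U_\iny(t)$, which is continuous since $U_\iny \in C([0,T])$.

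\emph{The evolution equation.} By the chain rule, since $\theta \in C^1(0,T;C^2(\R^2))$ and $\partial_t \ox = U_\iny(t)$,
\begin{align*}
    \prt_t \otheta(t,x) = (\prt_t \theta)(t,\ox) + U_\iny(t)\cdot(\grad\theta)(t,\ox),
\end{align*}
while $\grad\otheta(t,x) = (\grad\theta)(t,\ox)$ and $\Delta\otheta(t,x) = (\Delta\theta)(t,\ox)$, because the map $x\mapsto\ox$ is a translation in $x$ for each fixed $t$. Therefore
\begin{align*}
    \prt_t\otheta + \ou\cdot\grad\otheta - \nu\Delta\otheta
    = \big[\prt_t\theta + U_\iny\cdot\grad\theta + (u - U_\iny)\cdot\grad\theta - \nu\Delta\theta\big](t,\ox) = 0.
\end{align*}
The same translation invariance gives $\dv\ou(t,x) = (\dv u)(t,\ox) = 0$, since $U_\iny(t)$ is constant in $x$. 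Continuity in time of $\ou$ into $L^2_{loc}$ follows from the continuity of $u$ into $L^2_{loc}$, the continuity of $X$, and the continuity of translation in $L^2_{loc}$.

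\emph{The constitutive law.} This is the one step that is not pure chain-rule bookkeeping, and the key point is that each $\Del_j$ annihilates constants. Indeed, $\wh{\varphi_j}$ is supported in an annulus away from the origin, so $\int\varphi_j = \wh{\varphi_j}(0) = 0$. Hence $\Del_j U_\iny(t) = 0$, and convolution commutes with translation. Combining these,
\begin{align*}
    \Del_j\ou(t,x) = (\Del_j u)(t,\ox) = \big((\Del_j K)*\theta(t)\big)(\ox) = \big((\Del_j K)*\otheta(t)\big)(x),
\end{align*}
for every $j\in\Z$. This is precisely why the homogeneous form of the law, rather than $u = K*\theta$, admits the added time-dependent constant $U_\iny(t)$. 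One should also confirm that the convolutions make sense for $\theta(t) \in L^\iny$ and $u(t) \in L^2_{loc}$ of the relevant growth, which holds because $\varphi_j, \Del_j K \in \Cal{S}$.

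\emph{Initial data.} Since $X(0) = 0$, we have $\otheta(0) = \theta_0$ and $\ou(0) = u_0 - U_\iny(0)$, which equals $u_0$ exactly when $U_\iny(0) = 0$.

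The main obstacle is merely making the constitutive-law step rigorous: the identity $\Del_j(\text{const}) = 0$ must be justified for tempered distributions, which follows from the support of $\wh{\varphi_j}$ excluding the origin. Apart from this, the argument consists entirely of direct computations.
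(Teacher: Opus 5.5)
Your proof is correct and follows the paper's argument: the same chain-rule computation for the transport--diffusion equation, together with the observation that $\Del_j U_\iny(t) = 0$ for the constitutive law. The only cosmetic difference is that you check the constitutive law by commuting translation with convolution in physical space, whereas the paper (in \cref{L:ConstitutiveTranslated}) does the equivalent computation on the Fourier side using the phase factor $e^{i\al\cdot\xi}$.
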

\begin{proof}
By \cref{L:ConstitutiveTranslated}, $(\ol{\theta}, \ou)$ satisfies the homogeneous constitutive law. Using that
\begin{align*}
	\prt_t \ox
		&= U_\iny(t), \\
	\grad \ox
		&= \grad x
		= \Id,
\end{align*}
applying the chain rule gives
\begin{align*}
	\prt_t \otheta(t, x)
		&= \prt_t \theta(t, \ox) + \grad \theta(t, \ox) \cdot\prt_t \ox
		= \prt_t \theta(t, \ox) + U_\iny(t)
			\cdot \grad \theta(t, \ox), \\
	\grad \otheta(t, x)
		&= \grad \theta(t, \ox), \\
	\Delta \otheta(t, x)
		&= \Delta \theta(t, \ox),
\end{align*}
where here and below $\grad$ and $\Delta$ are with respect to $x$.
Hence,
\begin{align*}
	\prt_t \otheta(t, &x)
		+ \ou(t, x) \cdot \grad \otheta(t, x) 
			+ \nu \Delta \otheta(t, x) \\
		&= \prt_t \theta(t, \ox) + U_\iny(t)
			\cdot \grad \theta(t, \ox)
			+ (u(t, \ox) - U_\iny(t)) \cdot \grad \theta(t, \ox) \\
			&\qquad\qquad
			+ \nu \Delta \theta(t, \ox) \\
		&= \prt_t \theta(t, \ox)
			+ u(t, \ox) \cdot \grad \theta(t, \ox)
			+ \nu \Delta \theta(t, \ox)
		= 0.
\end{align*}
Thus, $(\otheta(t, x),\ou(t, x))$ is also a solution to $(SQG_{\nu})$. Moreover, if $U_\iny(0) = 0$, then, by \cref{e:COVx} and \cref{e:SolTranslation}, $(\bar{\theta}_0, \bar{u}_0) = (\theta_0, u_0)$.
\end{proof}

We used \cref{L:ConstitutiveTranslated} in the proof of \cref{P:SQGnuNonunique}, above.
\begin{lemma}\label{L:ConstitutiveTranslated}
	With the change of variables in \cref{e:COVx} and
	$\ol{\theta}$, $\ou$ as in \cref{e:SolTranslation},
	for any $j \in \Z$,
	$(\Del_j K) * \ol{\theta} = \Del_j \ou$ if and only if
	$(\Del_j K) * \theta = \Del_j u$.
\end{lemma}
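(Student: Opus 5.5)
The plan is to use that convolution commutes with translation, and that $\Del_j$ annihilates constants. Fix $t$ and set $a(t) := \int_0^t U_\iny(s)\,ds$, so that $\ox = x + a(t)$. Write $\tau_a f(x) := f(x + a)$. Then $\otheta(t) = \tau_{a(t)} \theta(t)$ and $\ou(t) = \tau_{a(t)} u(t) - U_\iny(t)$.

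The first step is to record the translation identity
\begin{align*}
    g * (\tau_a f)(x) = \int_{\R^2} g(x-y) f(y+a)\,dy = \int_{\R^2} g(x+a-z) f(z)\,dz = \tau_a (g * f)(x).
\end{align*}
We apply this with $g = \Del_j K$, which is in $L^1$ by \cref{SNKbound}, and $f = \theta(t) \in L^\iny$, so the integrals converge absolutely. We apply it also with $g = \varphi_j \in \Cal{S}$ and $f = u(t)$; this requires the convolution to make sense for $u(t) \in L^2_{loc}$ with tempered growth. If $u$ is only a tempered distribution, we interpret the identity distributionally, which is standard since translation commutes with Fourier multipliers.

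The second step is to check that $\Del_j$ kills constants. Since $\wh{\varphi_j}$ is supported in the annulus $\{\frac{3}{5}2^j \le |\xi| \le \frac{5}{3}2^j\}$, it vanishes at $\xi = 0$. Hence $\int \varphi_j = 0$ and $\Del_j U_\iny(t) = \varphi_j * U_\iny(t) = 0$, because $U_\iny(t)$ is constant in $x$. Consequently
\begin{align*}
    \Del_j \ou(t) = \tau_{a(t)} \Del_j u(t), \qquad (\Del_j K) * \otheta(t) = \tau_{a(t)} \big((\Del_j K) * \theta(t)\big).
\end{align*}

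For the conclusion, note that $\tau_{a(t)}$ is a bijection with inverse $\tau_{-a(t)}$. So $\Del_j \ou(t) = (\Del_j K) * \otheta(t)$ holds if and only if $\Del_j u(t) = (\Del_j K) * \theta(t)$, for each $t$ and each $j$. No step is a real obstacle. The only care needed is justifying the convolutions for the function classes involved, and the second step, where the vanishing of $\wh{\varphi_j}$ near the origin is exactly what lets us discard $U_\iny$.
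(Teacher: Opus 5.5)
Your proof is correct and is essentially the paper's argument. The paper works on the Fourier side, where translation by $\alpha = \int_0^t U_\iny(s)\,ds$ becomes multiplication by $e^{i\alpha\cdot\xi}$ and $\Del_j U_\iny(t) = 0$ removes the constant; you state the same two facts in physical space via $g * (\tau_a f) = \tau_a (g * f)$ and $\int \varphi_j = 0$, which avoids taking Fourier transforms of the non-decaying $\theta$ and $u$ (though, as you note, $\varphi_j * u(t)$ still needs $u(t)$ to be tempered, as the paper's computation also implicitly assumes).
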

\begin{proof}
Let us suppose that
$(\Del_j K) * \ol{\theta} = \Del_j \ou$.
Then, with $\al := \int_0^t U_\iny(s) \, ds$ (see \cref{S:LPOperators} for the precise definition of $\Del_j$),
\begin{align*}
	&\FTF \pr{(\Del_j K) * \ol{\theta}}(\xi)
		= \wh{\varphi_j}(\xi) \wh{K}(\xi)
			\wh{\ol{\theta}}(\xi)
		= \wh{\varphi_j}(\xi) \wh{K}(\xi)
			\FTF \pr{\theta(\cdot + \al)}(\xi) \\
		&\qquad
		= e^{i \al \cdot \xi} \wh{\varphi_j}(\xi) \wh{K}(\xi)
				\wh{\theta}(\xi)
		= e^{i \al \cdot \xi} \FTF \pr{(\Del_j K) * \theta)}(\xi),
\end{align*}
while, using that $\Del_j U_\iny(t) = 0$,
\begin{align*}
	&\FTF \pr{\Del_j \ol{u}}(\xi)
		= \FTF \pr{\Del_j (u(\cdot + \al) - U_\iny(t))}(\xi)
		= \FTF \pr{\Del_j (u(\cdot + \al))}(\xi) \\
		&\qquad
		= e^{i \al \cdot \xi}
			\wh{\varphi_j} (\xi) \wh{u}(\xi) 
		= e^{i \al \cdot \xi} \FTF \pr{\Del_j u}(\xi).
\end{align*}
But $(\Del_j K) * \ol{\theta} = \Del_j \ou$, so equating the expressions for their Fourier transforms above, we see that $(\Del_j K) * \theta = \Del_j u$. The proof of the converse implication proceeds in the same manner.
\end{proof}

\begin{prop}\label{P:SQG0Nonunique}
Let $(\theta, u)$ be a weak solution to \cref{e:SQG0} on $[0,T] \times \R^2$ as in \cref{weaksolutiondef2}. Assuming that $U_\iny \in C([0, T])$, the transformations in \cref{e:COVx,e:SolTranslation} yield another weak solution, $(\otheta, \ou)$, which, if $U_\iny(0) = 0$, has the same initial data.
\end{prop}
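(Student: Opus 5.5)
We check the three requirements of \cref{weaksolutiondef2} for $(\otheta,\ou)$ in turn, using $\al(t):=\int_0^t U_\iny(s)\,ds$, so that $\ox = x+\al(t)$.

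\textbf{Regularity, divergence-free condition, and constitutive law.} Translation in $x$ preserves $L^\iny$ and $L^2_{ul}$ norms, since the latter is a supremum over unit balls. Subtracting the bounded continuous vector $U_\iny(t)$ changes the $L^2_{ul}$ norm by at most $C\sup_t\abs{U_\iny(t)}$. Hence
\[
(\otheta,\ou)\in L^\iny([0,T];L^\iny)\times L^\iny([0,T];L^2_{ul}).
\]
Measurability in $(t,x)$ holds because $\al$ is continuous. Next, $\dv \ou(t)=(\dv u)(t,\cdot+\al(t))=0$, since constants are divergence-free. The homogeneous constitutive law for $(\otheta,\ou)$ at each $t$ follows directly from \cref{L:ConstitutiveTranslated}. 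Its proof is purely at fixed $t$ via Fourier transforms and uses only $\Del_j U_\iny(t)=0$, so it applies verbatim in this weak setting.

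\textbf{Weak formulation.} Fix $\phi\in C_c^\iny([0,T]\times\R^2)$ and set $\psi(t,y):=\phi(t,y-\al(t))$. Then $\psi$ is compactly supported, $C^\iny$ in $y$, and $C^1$ in $t$, because $\al\in C^1$ with $\al'=U_\iny$ continuous. We have
\[
\prt_t\psi(t,y)=\prt_t\phi(t,y-\al)-U_\iny(t)\cdot\grad\phi(t,y-\al).
\]
Change variables $y=x+\al(t)$ in each spatial integral of \cref{e:WeakId} written for $(\otheta,\ou)$ and $\phi$:
\begin{align*}
\int\!\!\int \prt_t\phi\,\otheta + \grad\phi\cdot\ou\,\otheta
&= \int\!\!\int \bigl(\prt_t\phi(t,y-\al) + \grad\phi(t,y-\al)\cdot(u(t,y)-U_\iny)\bigr)\theta(t,y)\,dy\,dt\\
&= \int\!\!\int \bigl(\prt_t\psi + \grad\psi\cdot u\bigr)\theta\,dy\,dt .
\end{align*}
The $U_\iny$ terms cancel exactly as in the proof of \cref{P:SQGnuNonunique}. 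The boundary terms transform the same way: $\int\phi(T)\otheta(T)\,dx=\int\psi(T)\theta(T)\,dy$ and $\int\phi(0)\theta_0\,dx=\int\psi(0)\theta_0\,dy$, using $\al(0)=0$. So \cref{e:WeakId} for $(\otheta,\ou)$ with $\phi$ is equivalent to \cref{e:WeakId} for $(\theta,u)$ with test function $\psi$.

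\textbf{Main obstacle: $\psi$ is only $C^1_c$.} The test function $\psi$ is only $C^1$ in time, not $C_c^\iny$. The plan is to approximate: mollify $\al$ in time to get $\al_\eps\in C^\iny$ with $\al_\eps\to\al$ in $C^1([0,T])$, and set $\psi_\eps(t,y)=\phi(t,y-\al_\eps(t))\in C_c^\iny$. Then $\psi_\eps\to\psi$ and $\grad\psi_\eps\to\grad\psi$ uniformly on a common compact set. Since $\prt_t\psi_\eps$ involves $\al_\eps'\to U_\iny$ uniformly, $\prt_t\psi_\eps\to\prt_t\psi$ uniformly as well.

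With $\theta\in L^\iny$ and $u\theta\in L^\iny(L^1_{loc})$, which holds because $u\in L^\iny(L^2_{ul})$, we can pass to the limit in \cref{e:WeakId} for $\psi_\eps$. The endpoint terms converge because $\theta(T)\in L^\iny$ and $\psi_\eps(T)\to\psi(T)$ in $L^1$. This is the route indicated by \cref{R:WeakerTestFunctions}, which says $C^1$ test functions give an equivalent definition.

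\textbf{Initial data.} If $U_\iny(0)=0$, then $\al(0)=0$ gives $\otheta(0)=\theta_0$ and $\ou(0)=u_0-U_\iny(0)=u_0$. So the new solution has the same initial data.
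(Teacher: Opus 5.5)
Your proposal is correct and follows essentially the same route as the paper. Both use the translated test function $\phi(t,y-\al(t))$, the change of variables $y=x+\al(t)$, the cancellation of the $U_\iny$ terms, and the matching of the endpoint terms. Where you differ it is only in added care: you verify the regularity, divergence-free, and constitutive-law requirements (via \cref{L:ConstitutiveTranslated}), and you justify the merely $C^1$-in-time test function by mollifying $\al$ instead of citing \cref{R:WeakerTestFunctions}. Your formula $\prt_t\psi=(\prt_t\phi)(t,y-\al)-U_\iny\cdot\grad\phi(t,y-\al)$ is also applied with the correct signs, whereas the paper's displayed computation has two sign slips that cancel (one in its expression for $\prt_t\phi$, one in front of the nonlinear term).
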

\newcommand{\ophi}{\ol{\phi}}
\begin{proof}
    Defining $z(t) := \int_0^t U_\iny(s) \, ds$, we can write
    \begin{align*}
        \ox = \ox(t, x) = x + z(t), \,
        \otheta(t, x) = \theta(t, \ox), \,
        \ou(t, x) = u(t, \ox) - U_\iny(t),
    \end{align*}
    so,
    \begin{align*}
        \prt_t \phi(t, x)
            &= \prt_t \phi(t, \ox - z(t))
            = (\prt_t \phi)(t, \ox - z(t))
            - (\grad \phi)(t, \ox - z(t)) \cdot U_\iny(t), \\
        \grad \phi(t, x)
            &= \grad_x \phi(t, \ox - z(t))
            = \grad_{\ox} \phi(t, \ox - z(t)).
    \end{align*}
    We can write this as
    \begin{align*}
        \prt_t \phi(t, x)
            &= \prt_t \ophi(t, \ox) - \grad \ophi(t, \ox) \cdot U_\iny(t), \\\
        \grad \phi(t, x)
            &= \grad \ophi(t, \ox), \\
        \ophi(t, y)
            &:= \phi(t, y - z(t)),
    \end{align*}
    and hence $\ophi \in C^1_c([0, T) \times \R^2)$.
    Using these expressions, we have
    \begin{align*}
        \int_0^T &\int_{\R^2} (\prt_t \phi \, \otheta)(t, x) \, dx \, dt
                - \int_0^T \int_{\R^2}
                    (\grad \phi \cdot \ou \, \otheta)(t, x) \, dx \, dt \\
            &= \int_0^T \int_{\R^2}
                \pr{\prt_t \ophi(t, \ox) - \grad \ophi(t, \ox) \cdot U_\iny(t)}
                \theta(t, \ox)
                \, d \ox \, dt \\
            &\qquad
                - \int_0^T \int_{\R^2}
                    \grad \ophi(t, \ox) \cdot
                    (u(t, \ox) - U_\iny(t)) \, \theta(t, \ox)
                \, d \ox \, dt \\
           &= \int_0^T \int_{\R^2}
                \prt_t \ophi(t, \ox)
                \theta(t, \ox)
                \, d \ox \, dt
                - \int_0^T \int_{\R^2}
                    \grad \ophi(t, \ox) \cdot
                    u(t, \ox) \, \theta(t, \ox)
                \, d \ox \, dt \\
            &= \int_{\R^2}\ophi(T)\theta(T)-\int_{\R^2} \ophi(0) \theta_0
            = \int_{\R^2}\ophi(T)\otheta(T)-\int_{\R^2} \ophi(0) \otheta_0,
    \end{align*}
    the second-to-last equality holding because $(\theta, u)$ is a weak solution with the same initial data as $(\otheta, \ou)$. This shows, in light of \cref{R:WeakerTestFunctions}, that $(\otheta, \ou)$ is a weak solution.
\end{proof}

The non-uniqueness of \cref{P:SQG0Nonunique} arises from allowing non-decaying data; left open is whether non-uniqueness still holds if we assume the initial data decays rapidly at infinity.

\section{A Serfati identity and a priori estimates}\label{S:Apriori}
In this section, we establish a Littlewood-Paley version of a Serfati identity satisfied by smooth non-decaying solutions to ($SQG_{\nu}$) with $\nu>0$.  We utilize this identity to prove the necessary a priori bounds on smooth, non-decaying solutions, as given by \cref{T:SQG1}.  In \cref{S:Existence}, we will apply these bounds to an approximation sequence of smooth solutions and pass to the limit as $\nu\rightarrow 0$ to yield a weak solution to ($SQG_0$).

We begin with a statement of the Serfati identity and an a priori estimate on the $L^2_{ul}$-norm of the velocity. 

\begin{theorem}\label{theoremapriori}
Let $k\geq 2$, $\nu>0$, $T>0$, $N \in \Z$, and let $(\theta,u)$ be a smooth, non-decaying solution to ($SQG_{\nu}$) on $[0,T]$ with initial data $u_0,\theta_0\in C_b^k(\R^2)$.  Then there exists $U_{\infty}\in C([0,T])$ independent of $N$, with $U_{\infty}(0)=0$, such that for all $t\in [0,T]$,  
\begin{equation}\label{FourierSerfati2}
\begin{split}
u(t,x) - u_0(x)  &= U_{\infty}(t) -\int_0^t \left( S_{N}\nabla K \stardot (\theta u) -  \nu S_{N}\Delta K \ast \theta\right)(s,x) \, ds\\
&\qquad\qquad + (H_NK) \ast (\theta(t,x) - \theta_0(x)).
\end{split}
\end{equation}
Moreover, the following estimate holds for each $t\in [0,T]$:
\begin{equation}\label{L2ulestDSQG}
\| u(t) \|_{L^2_{ul}} \leq C\left(\| u_0 \|_{L^2_{ul}} + \|\theta_0 \|_{L^{\infty}} +|U_{\infty}(t)|+\nu\right)\exp \left( Ct\|\theta_0\|_{L^{\infty}}\right). 
\end{equation}  
\end{theorem}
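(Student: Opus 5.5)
We will derive \cref{FourierSerfati2} by splitting $u = S_N u + H_N u$, treating each piece with the equation, and then estimate \cref{L2ulestDSQG} using Gr\"onwall's inequality.

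For the high frequencies, the homogeneous constitutive law summed over $j\ge N$ (as in \cref{constlawapp}) gives
\begin{align*}
H_N u(t) - H_N u_0 = (H_NK)\ast(\theta(t)-\theta_0).
\end{align*}
This requires convergence of $\sum_{j\ge N}\Del_j u$ to $H_N u$, which holds for $u\in C^k_b$.

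For the low frequencies, we apply $\Del_j$ and the constitutive law, then differentiate in time using the equation:
\begin{align*}
\prt_t \Del_j u = (\Del_j K)\ast\prt_t\theta = -(\Del_j K)\ast(u\cdot\grad\theta) + \nu (\Del_j K)\ast\Delta\theta .
\end{align*}
Since $\dv u=0$, we write $u\cdot\grad\theta = \dv(\theta u)$ and move the derivatives onto the kernel. This gives
\begin{align*}
\prt_t \Del_j u = -\Del_j\grad K\stardot(\theta u) + \nu\Del_j\Delta K\ast\theta .
\end{align*}
Summing over $j<N$ formally gives the time derivative of $S_Nu$, modulo something living at frequency zero. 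By \cref{SNKbound}, $\nabla S_NK$ and $\Delta S_N K$ lie in $L^1$, so the sums $\sum_{j<N}$ converge absolutely in $L^\infty$, since $\theta u$ is bounded for the smooth solution. Hence
\begin{align*}
\prt_t\Big(S_Nu + \int_0^t \big(S_N\grad K\stardot(\theta u)-\nu S_N\Delta K\ast\theta\big)\,ds\Big)
\end{align*}
has all $\Del_j$ equal to zero, so it is a polynomial. Being bounded, it is a function of time only; call it $U_\infty'(t)$. Continuity of $U_\infty$ and $U_\infty(0)=0$ follow by construction. Independence of $N$ comes from comparing the identities for $N$ and $N'$: the difference of the right-hand sides equals, via the same computation on the finitely many $\Del_j$ between $N$ and $N'$, exactly $(\Del_jK)\ast(\theta(t)-\theta_0)$. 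So the constant is the same.

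This is the step I expect to be the main obstacle. It requires justifying that a distribution with vanishing $\Del_j$ for all $j$ is a constant and not merely a polynomial, and making the zero-frequency bookkeeping rigorous.

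For the estimate, take $N=0$, multiply by $\eta\in H_1$, and take $L^2$ norms. The terms are bounded as follows:
\begin{itemize}
\item $\|\eta u_0\|_{L^2}\le \|u_0\|_{L^2_{ul}}$.
\item $\|\eta (H_0K)\ast(\theta-\theta_0)\|_{L^2}\le C\|\theta_0\|_{L^\infty}$. For this, write $H_0K\ast\theta = H_0 u - $ (a suitable low part), or better use the $L^2_{ul}$-boundedness of $\Del_jK\ast$ on $L^\infty$ data with the $2^{-j}$ gains of \cref{L:DelKxBound}. Simplest is to treat $H_0K$ as a Calder\'on--Zygmund-type kernel split into $|x|<1$ (integrable after mean-zero cancellation) and a far part decaying like $|x|^{-3}$. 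One then uses \cref{max}.
\item The viscous term is bounded by $C\nu t\|\theta_0\|_{L^\infty}$, using $\|S_0\Delta K\|_{L^1}\le C$.
\item The nonlinear term is $\int_0^t\|\eta\, S_0\grad K\stardot(\theta u)\|_{L^2}\,ds$. By Minkowski's inequality as in \cref{nablasntozero}, this is bounded by
\begin{align*}
\|\grad S_0K\|_{L^1}\,\|\theta\|_{L^\infty}\,\|u\|_{L^2_{ul}} \le C\|\theta_0\|_{L^\infty}\|u(s)\|_{L^2_{ul}} .
\end{align*}
\end{itemize}
Taking the supremum over $\eta$ gives
\begin{align*}
\|u(t)\|_{L^2_{ul}}\le C\big(\|u_0\|_{L^2_{ul}}+\|\theta_0\|_{L^\infty}+|U_\infty(t)|+\nu\big)+C\|\theta_0\|_{L^\infty}\int_0^t\|u(s)\|_{L^2_{ul}}\,ds,
\end{align*}
where the factor $t$ on the viscous term is absorbed into the constant. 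Gr\"onwall's inequality then yields \cref{L2ulestDSQG}; bounding $|U_\infty|$ by its supremum on $[0,t]$ if needed handles its monotonicity.
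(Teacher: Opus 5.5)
Your architecture matches the paper's: apply $\Del_j$ and the constitutive law, use boundedness to reduce the resulting polynomial to a constant in $x$, telescope to get $N$-independence, localize with $N=0$, bound the low-frequency term by Minkowski, and close with Gr\"onwall. The gap is in the one estimate that needs a real idea, namely $\|\eta\,(H_0K)*\theta(t)\|_{L^2}\le C\|\theta_0\|_{L^\infty}$ with $C$ independent of the solution. This uniformity is what makes the bound usable uniformly in $\nu$ later. None of your three routes works as written:
\begin{itemize}
\item Rewriting $(H_0K)*\theta=H_0u$ only gives $C\|u(t)\|_{L^2_{ul}}$, with an $O(1)$ coefficient outside the time integral, so Gr\"onwall cannot close.
\item Summing $\|(\Del_jK)*\theta\|_{L^\infty}$ over $j\ge 0$ diverges, because $\|\Del_jK\|_{L^1}=\|\Del_0K\|_{L^1}$ for every $j$. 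The $2^{-j}$ of \cref{L:DelKxBound} appears only after you form a commutator with a cutoff.
\item $H_0K\,\mathbf{1}_{\{|x|<1\}}$ is not integrable, since it behaves like $|x|^{-2}$ in two dimensions. Mean-zero cancellation gives no pointwise bound for merely bounded $\theta$: a truncated Riesz transform of a half-plane indicator has a logarithmic singularity. So the near part cannot be controlled in $L^\infty$.
\end{itemize}

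What is missing is an $L^2$ argument after localizing $\theta$. In your kernel-splitting version, the near kernel has range $1$. So if $\supp\eta\subset B_1(x_0)$, the near part on $\supp\eta$ only sees $\theta\mathbf{1}_{B_2(x_0)}$. The truncated kernel is $K\mathbf{1}_{B_1}$ plus a bounded compactly supported piece, and its $L^2$-boundedness gives a bound $C\|\theta\mathbf{1}_{B_2(x_0)}\|_{L^2}\le C\|\theta\|_{L^\infty}$. The far part has an integrable kernel and is handled by Young's inequality.

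The paper instead writes $\eta\,(H_0K)*\theta=\eta[\psi,(H_0K)*]\theta+\eta\,(H_0K)*(\psi\theta)$, with $\psi\equiv 1$ on $\supp\eta$. The commutator is at most $\sum_{j\ge0}C2^{-j}\|\theta\|_{L^\infty}$ by \cref{LPRieszcommutator}. The second term is controlled by the $L^2$-boundedness of $K*$ and $S_0$. Either version closes the estimate.

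A secondary point concerns the time derivative. You differentiate $S_Nu$ in time and define $U_\infty$ through $U_\infty'$. However, \cref{T:SQG1} only gives $u\in C((0,T];C^k_b)$, and by \cref{C:SerfatiAway} $U_\infty$ may be an arbitrary continuous function, so $\prt_t S_Nu$ need not exist. Instead, integrate $\prt_t\Del_j u=(\Del_jK)*\prt_t\theta$ from $0$ to $t$ first, which is legitimate since $\theta$ is $C^1$ in time. Then run the polynomial argument at each fixed $t$, as the paper does.
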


Before proving \cref{theoremapriori}, we establish the following lemma, which estimates the low frequencies in \cref{FourierSerfati2}.
\begin{lemma}\label{lowfreqwithN}
Let $\theta,u$ be as in \cref{theoremapriori} above.  For any $N\in\Z$, $(t,x)\in [0,T]\times\R^2$, and $\eta\in C^{\infty}_c(\R^2)$, if we set 
\begin{equation*}
l_N(t,x) = -\eta(x)\int_0^t \left( S_{N}\nabla K \stardot (\theta u)  - \nu S_{N}\Delta K \ast \theta\right)(s,x) \, ds,
\end{equation*}
then 
\begin{equation*} 
\begin{split}
&\| l_N(t,x) \|_{L^2_x} \leq C(2^{N}+ 2^{2N})\| \theta_0 \|_{L^{\infty}}\left(t\nu + \int_0^t \| u(s) \|_{L^2_{ul}}\, ds \right).
\end{split}
\end{equation*}
\end{lemma}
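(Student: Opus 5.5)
We bound the two terms in $l_N$ separately and then apply Minkowski's inequality in time, which gives
\begin{align*}
\|l_N(t)\|_{L^2_x} \le \int_0^t \|\eta\, S_N\nabla K \stardot (\theta u)(s)\|_{L^2} \, ds + \nu\int_0^t \|\eta\, S_N\Delta K \ast \theta(s)\|_{L^2}\, ds .
\end{align*}
Throughout, the maximum principle \cref{max} lets us replace $\|\theta(s)\|_{L^\iny}$ by $\|\theta_0\|_{L^\iny}$. We may take $\|\eta\|_{L^\iny}\le 1$; otherwise a constant depending on $\eta$ is absorbed into $C$. The kernel bounds come from \cref{SNKbound}: $\|S_N \nabla K\|_{L^1}\le C2^N$ with $|\alpha|=1$, and $\|S_N\Delta K\|_{L^1}\le C2^{2N}$ with $|\alpha|=2$.

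\textbf{Nonlinear term.} We write $S_N\nabla K\stardot(\theta u)(s,x)=\int_{\R^2}(S_N\nabla K)(y)\stardot(\theta u)(s,x-y)\,dy$ and argue exactly as in \cref{nablasntozero}. Minkowski's inequality for integrals gives
\begin{align*}
\|\eta\, S_N\nabla K\stardot(\theta u)\|_{L^2_x}\le \int_{\R^2}|S_N\nabla K(y)|\,\|\eta(\cdot)(\theta u)(s,\cdot-y)\|_{L^2}\,dy .
\end{align*}
Since $\supp\eta$ is compact, it is covered by finitely many unit balls, the number depending only on $\eta$. Hence $\|\eta(\cdot)(\theta u)(s,\cdot-y)\|_{L^2}\le C\|\theta_0\|_{L^\iny}\|u(s)\|_{L^2_{ul}}$ uniformly in $y$. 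The integrand is therefore bounded by $C2^N\|\theta_0\|_{L^\iny}\|u(s)\|_{L^2_{ul}}$.

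\textbf{Dissipative term.} The same Minkowski argument applies, using $\|\eta\,\theta(s,\cdot-y)\|_{L^2}\le \|\eta\|_{L^2}\|\theta_0\|_{L^\iny}$. This gives $\|\eta\,S_N\Delta K\ast\theta(s)\|_{L^2}\le C2^{2N}\|\theta_0\|_{L^\iny}$, so the term contributes $C2^{2N}\|\theta_0\|_{L^\iny}\, t\nu$.

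\textbf{Combining.} Adding the two contributions and bounding each of $2^N$ and $2^{2N}$ by $2^N+2^{2N}$ gives the claimed estimate.

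\textbf{Main obstacle.} The only real point is justifying that the convolution integrals converge absolutely, since $\theta u$ is only bounded and does not decay. Smoothness of the solution makes $\theta u$ bounded, and $S_N\nabla K$, $S_N\Delta K\in L^1$ by \cref{SNKbound}, so the convolutions are well defined pointwise and Minkowski's inequality applies. We must also be careful that $S_N$ acting on $\nabla K$ and $\Delta K$ is handled at the level of kernels, with the derivative placed on $S_NK$, because $\nabla K$ itself is not integrable near the origin.
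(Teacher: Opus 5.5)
Your proposal is correct and follows essentially the same route as the paper. You split $l_N$ into the nonlinear and dissipative pieces and apply Minkowski's inequality for integrals, with the kernel bounds $\|\nabla S_N K\|_{L^1}\le C2^N$ and $\|\Delta S_N K\|_{L^1}\le C2^{2N}$ from \cref{SNKbound} and the maximum principle \cref{max}; the paper handles the dissipative piece by H\"older and Young instead of Minkowski, which makes no difference. Your covering of $\supp\eta$ by finitely many unit balls makes explicit the $\eta$-dependence of $C$, which the paper glosses over when it bounds $\sup_y\|u(s,\cdot-y)\eta\|_{L^2}$ directly by $\|u(s)\|_{L^2_{ul}}$.
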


\begin{proof}
First write $$l_N(t,x) =l_N^1(t,x) +l_N^2(t,x),$$ where 
\begin{equation*}
\begin{split}
&l_N^1(t,x) := -\eta(x)\int_0^t S_{N}\nabla K \stardot (\theta u) (s,x) \, ds,\\
&l_N^2(t,x) :=  \nu\eta(x)\int_0^t S_{N}\Delta K \ast \theta(s,x) \, ds. 
\end{split}
\end{equation*}  

Note that, by Minkowski's inequality for integrals,
\begin{equation}\label{I^1stepone}
\begin{split}
&\|l_N^1(t, x) \|_{L^2_x} = \left \| \eta(x)\int_0^t S_{N}\nabla K \stardot (\theta u)(s,x)  \, ds \right \|_{L^2_x}\\
&\qquad = \left \| \eta(x)\int_0^t \int_{\R^2}S_{N}\nabla K(y) \cdot(\theta u)(s,x-y) \, dy  \, ds \right \|_{L^2_x}\\
&\qquad \leq  \int_0^t\int_{\R^2} \| \eta(x) S_{N}\nabla K(y) \cdot(\theta u)(s,x-y) \|_{L^2_x} \, dy  \, ds \\
&\qquad = \int_0^t\int_{\R^2} \abs{\nabla S_{N}K(y)} \| (\theta u)(s, x-y)\eta(x) \|_{L^2_x} \, dy\, ds\\
&\qquad \leq \int_0^t\| \theta(s)\|_{L^{\infty}} \sup_{y\in\R^2}\| u(s, x-y)\eta(x) \|_{L^2_x}\, ds \int_{\R^2} \abs{\nabla S_{N}K(y)}  \, dy\\
&\qquad \leq \int_0^t\| \theta(s)\|_{L^{\infty}} \| u(s) \|_{L^2_{ul}}\, ds \int_{\R^2} \abs{\nabla S_{N}K(y)}  \, dy.
\end{split}
\end{equation}
By \cref{SNKbound}, $\| \nabla S_{N}K \|_{L^1} \leq C2^{N}$.  It then follows from \cref{I^1stepone} and \cref{max} that \begin{equation}\label{lowfreqjust}\|l_N^1(t,x) \|_{L^2_x} \leq C2^{N}\int_0^t\| \theta(s)\|_{L^{\infty}} \| u(s) \|_{L^2_{ul}}\, ds \leq C2^{N}\int_0^t\| \theta_0\|_{L^{\infty}} \| u(s) \|_{L^2_{ul}}\, ds.
\end{equation}
To estimate $l_N^2$, we observe that 
\begin{equation}\label{ln:l1-l2-bound}
\begin{split}
&\|l_N^2(t, x) \|_{L^2_x} \leq \nu\| \eta\|_{L^2} \left \|\int_0^t (S_{N}\Delta K \ast \theta)(s,x)  \, ds \right \|_{L^{\infty}_x}\\
&\qquad \leq \nu\| \eta\|_{L^2}\int_0^t\| \theta(s) \|_{L^{\infty}}\, ds \int_{\R^2} \abs{\Delta S_{N}K(y)}  \, dy.
\end{split}
\end{equation}
Again by \cref{SNKbound},
\begin{equation*}
    \|S_N \Delta K\|_{L^1} \leq C2^{2N}.
\end{equation*}
Substituting this estimate into \eqref{ln:l1-l2-bound} and applying \cref{max}, we obtain
\begin{equation} \label{ln:bound-on-low-freq-no-deriv}
\|l_N^2(t, x) \|_{L^2_x} \leq Ct2^{2N}\nu \| \theta_0 \|_{L^{\infty}}.
\end{equation}
Combining the estimates for $l_N^1$ and $l_N^2$ gives
\begin{equation*}
\begin{split}
&\| l_N(t,x) \|_{L^2_x} \leq \|l_N^1 (t,x) \|_{L^2_x} + \|l_N^2 (t,x) \|_{L^2_x}\\
&\qquad  \leq C(2^{N}+ 2^{2N})\| \theta_0 \|_{L^{\infty}}\left(t\nu + \int_0^t \| u(s) \|_{L^2_{ul}}\, ds \right).
\qedhere
\end{split}
\end{equation*}
\end{proof}
\begin{proof}[\textbf{Proof of \cref{theoremapriori}}] 
We first establish \eqref{FourierSerfati2} for a fixed $N \in \Z$.  We begin with $(SQG_{\nu})_1$,
\begin{equation}
\frac{\partial\theta}{\partial t}+u\cdot\nabla\theta 
=\nu\,\Delta\theta.
\end{equation}
We integrate in time and use the divergence-free property of $u$ to get 
$$\theta(t,x) - \theta_0(x) = -\int_0^t( \dv (\theta u) - \nu\Delta\theta)(s,x) \, ds.$$
The above equality, the fact that $(\theta,u)$ satisfy the homogeneous constitutive law, and integration by parts give
\begin{equation*}
\begin{split}
&\dot{\Delta}_j \left(u(t,x) - u_0(x)\right) = S_{N}\left((\dot{\Delta}_jK) \ast (\theta(t) - \theta_0)(x)\right) + H_N \left((\dot{\Delta}_jK) \ast (\theta(t) - \theta_0)(x)\right) \\
&\qquad = - S_{N}\left((\dot{\Delta}_jK) \ast \int_0^t(\dv (\theta u) -\nu\Delta\theta)(s,x)  \, ds \right)+ H_N \left((\dot{\Delta}_jK) \ast (\theta(t) - \theta_0)(x) \right)\\
& = \dot{\Delta}_j\left(-\int_0^t \left( S_{N}\nabla K \stardot (\theta u)  - \nu S_{N}\Delta K \ast \theta\right)(s,x) \, ds +  (H_NK) \ast (\theta(t) - \theta_0)(x) \right).
\end{split}
\end{equation*}
As the above inequality holds for all $j\in\Z$, we conclude that 
\begin{equation*}
\begin{split}
&u(t,x) - u_0(x) = -\int_0^t \left( S_{N}\nabla K \stardot (\theta u)  - \nu S_{N}\Delta K \ast \theta\right)(s,x) \, ds\\
&\qquad +  (H_NK) \ast (\theta(t) - \theta_0)(x) +P(t,x),
\end{split}
\end{equation*}
where for each fixed $t\in[0,T]$, $P(t,x)$ is a polynomial in the spatial variable.  Now note that by \cref{SNKbound}, $S_N\nabla K$ and $S_N\Delta K$ belong to $L^1(\R^2)$. Since $\theta, u$ are in $L^{\infty}([0,T]; L^{\infty}(\R^2))$ by \cref{T:SQG1}, we can apply Young's inequality and conclude that the time integral above lies in $L^{\infty}([0,T]\times\R^2)$.  Moreover, by the argument in \cref{constlawapp}, 
$$u(t,x) - u_0(x) -  (H_NK) \ast (\theta(t) - \theta_0)(x) = S_N (u(t) - u_0)(x)\in L^{\infty}([0,T]\times\R^2),$$
again by \cref{T:SQG1}.  We conclude that $P(t,x) \in L^{\infty}(\R^2)$ for each fixed $t\in [0,T]$, implying that $P(t,x) = U_{\infty}(t)$, where $U_{\infty}$ is constant in $x$.  Continuity of $U_{\infty}$ in time again follows from regularity properties of $u$ and $\theta$ given by \cref{T:SQG1}.  This gives \cref{FourierSerfati2}.

To show that $U_{\infty}$ is independent of $N$, subtracting the second and third terms on the right side of \cref{FourierSerfati2} for $N=N_1$ from the corresponding terms for $N=N_2$ with $N_1 < N_2$ gives
\begin{align*}
    \sum_{j = N_1 + 1}^{N_2}
            &\brac{
                -\int_0^t
                ((\Del_j \grad K) \stardot (\theta u)(s) - \nu (\Del_j\Delta K) \ast \theta(s) ) \, ds
                    - (\Del_j K) * (\theta(t) - \theta_0)
            } \\
        &= -\sum_{j = N_1 + 1}^{N_2}
                \int_0^t (\Del_j K) * (\dv(\theta u)(s) - \nu\Delta\theta(s) + \prt_s \theta(s)) \, ds
        = 0,
\end{align*}
from which it follows that $U_{\infty}$ does not depend on $N$.

We now show \cref{L2ulestDSQG}. Let $\eta\in H_R(\R^2)$ for some $R>0$, and multiply \cref{FourierSerfati2} through by $\eta$, which gives
\begin{equation}\label{prelim}
\begin{split}
&\eta(x) u(t,x) - \eta(x) u_0(x) = -\eta(x)\int_0^t \left( S_{N}\nabla K \stardot (\theta u)  - \nu S_{N}\Delta K \ast \theta\right)(s,x) \, ds\\
&\qquad\qquad  + \eta(x)((H_NK) \ast (\theta(t) - \theta_0))(x) + \eta(x)U_{\infty}(t).
\end{split}
\end{equation} 
For ease of notation, for each $N\in\Z$, set the low frequency term and high frequency term as follows:
\begin{equation} \label{ln:def-of-I-and-II}
\begin{split}
&l_N(t,x) = -\eta(x)\int_0^t \left( S_{N}\nabla K \stardot (\theta u)  - \nu S_{N}\Delta K \ast \theta\right)(s,x) \, ds,\\
&h_N(t,x) = \eta(x)((H_NK) \ast (\theta(t) - \theta_0))(x).
\end{split}
\end{equation}
By \cref{lowfreqwithN} with $N=0$,
$$ \| l_0(t,x) \|_{L^2_x} \leq C_1\| \theta_0 \|_{L^{\infty}}\left(t\nu + \int_0^t \| u(s) \|_{L^2_{ul}}\, ds \right).$$

To estimate $h_0$, let $\psi\in C^{\infty}_c(\R^2)$ be equal to one on the support of $\eta$ and observe that
\begin{equation}\label{DefOfIandII}
\begin{split}
&\psi(x)\eta(x)((H_0K) \ast \theta(t,x) ) = \psi(x)\eta(x)\int_{\R^2} (H_0K)(x-y) \theta(t,y) \, dy\\
&\qquad = \eta(x)[\psi, (H_0K)\ast]\theta(x) + \eta(x)\int_{\R^2} (H_0K)(x-y) \psi(y) \theta(t,y) \, dy\\
& \qquad := h^1(t,x) + h^2(t,x).
\end{split}
\end{equation}
Since $K$ is a Calderon-Zygmund kernel and therefore bounded on $L^2(\R^2)$, the second term $h^2$ is estimated as follows:
\begin{equation*}
\begin{split}
\| h^2(t,x) \|_{L^2_x} &\leq C\|(Id-S_0)K\ast (\psi\theta(t))\|_{L^2}\\
&\leq C\|K\ast (\psi\theta(t))\|_{L^2} + C\|S_0(K\ast (\psi\theta(t)))\|_{L^2}\\
&\leq C\| \psi\theta(t) \|_{L^{2}} \leq C\| \theta(t) \|_{L^{\infty}} \leq C\| \theta_0 \|_{L^{\infty}}.  
\end{split}
\end{equation*}
For $h^1$, first write
\begin{equation}\label{H0stepone}
\begin{split}
&\| h^1(t) \|_{L^2_x} \leq \| \eta \|_{L^2} \|[\psi, (H_0 K)\ast]\theta\|_{L^{\infty}}\\
&\qquad \leq \| \eta \|_{L^2} \sum_{j\geq 0}\|[\psi, (\Del_j K)\ast]\theta\|_{L^{\infty}}.
\end{split}
\end{equation}
An application of \cref{LPRieszcommutator} and \cref{max} then gives
$$ \| h^1(t,x) \|_{L^2_x} \leq C\| \theta_0\|_{L^{\infty}}.$$
Combining the estimates for $h^1$ and $h^2$ gives
\begin{equation}\label{highfreq}
\| \eta(x)((H_0K) \ast \theta(t,x)) \|_{L^2_x} \leq C\|\theta_0 \|_{L^{\infty}}.
\end{equation}  
Similarly, we have
\begin{equation*}
\| \eta(x)((H_0K) \ast \theta_0(x)) \|_{L^2_x} \leq C\|\theta_0 \|_{L^{\infty}}.
\end{equation*}  
Thus, from the definition of $h_0(t,x)$, it follows easily that  
\begin{equation}\label{h0}
\|h_0(t,x) \|_{L^2_x} \leq C_2\|\theta_0 \|_{L^{\infty}}.
\end{equation}
Finally, substituting the above estimates for $l_0$ and $h_0$ into \cref{prelim} and taking the supremum over $\eta\in H_R(\R^2)$, we have
\begin{equation}
    \| u\|_{L^2_{ul}} \leq \|u_0\|_{L^2_{ul}} + |U_{\infty}(t)| +C_2\|\theta_0\|_{L^\infty} +  C_1\| \theta_0 \|_{L^{\infty}}\left(t\nu + \int_0^t \| u(s) \|_{L^2_{ul}}\, ds \right). 
\end{equation}
We are now in a position to use a linear Gronwall inequality (see Theorem 6 of \cite{dragomir2003some}) to give the final result,
\begin{equation}
    \|u\|_{L^2_{ul}} \leq \nu \exp(C_1\|\theta_0\|_{L^\infty}t -1) + \left(\| u_0\|_{L^2_{ul}} + |U_{\infty}(t)|+C_2\|\theta_0\|_{L^\infty} \right)\exp(C_1\|\theta_0\|_{L^\infty} t).
\end{equation}
\end{proof}
The next lemma, giving smoothness of the low frequency term in \cref{FourierSerfati2}, will allow us to conclude that the low frequency term vanishes uniformly on $[0,T]\times \R^2$ as $N$ approaches negative infinity. 
\begin{lemma}\label{IH12}
Let $m\geq 1$ be an integer.  Let $(\theta,u)$ be a smooth solution to $(SQG_{\nu})$ on $[0,T]$, given by \cref{T:SQG1}. Define $l_N(t,x)$ as in \cref{ln:def-of-I-and-II} with $N<0$. That is,
\begin{align*}
    l_N(t,x) =  -\eta(x)\int_0^t \left( S_{N}\nabla K \stardot (\theta u) - \nu S_{N}\Delta K \ast \theta\right)(s,x) \, ds.
\end{align*}
Then for $t\in[0,T]$,
\begin{equation}
\| l_N(t,x) \|_{H^m_x} \leq C2^{N}\| \theta_{0}\|_{L^{\infty}}\left(t\nu + \int_0^t \| u \|_{L^2_{ul}}\, ds\right).
\end{equation}
\end{lemma}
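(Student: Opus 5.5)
The plan is to estimate every derivative $\partial^\alpha l_N$ with $|\alpha|\le m$ in $L^2_x$ by the argument of \cref{lowfreqwithN}, putting the derivatives on the low-frequency kernels. By the Leibniz rule,
\[
\partial^\alpha l_N = -\sum_{\beta\le\alpha}\binom{\alpha}{\beta}\,\partial^{\alpha-\beta}\eta(x)\int_0^t\Big( \partial^\beta S_N\nabla K \stardot (\theta u) - \nu\, \partial^\beta S_N\Delta K\ast\theta\Big)(s,x)\,ds .
\]
Each $\partial^{\alpha-\beta}\eta$ is a fixed compactly supported smooth function. After normalizing it to have sup norm at most $1$ and support in a fixed ball, it plays the role of $\eta$ in \cref{lowfreqwithN}, at the cost of a constant depending on $\eta$ and $m$.

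For the nonlinear term I will repeat \cref{I^1stepone} exactly, with $\nabla S_N K$ replaced by $\partial^\beta\nabla S_NK$. Minkowski's inequality and \cref{max} bound its contribution by
\[
\|\partial^\beta \nabla S_N K\|_{L^1}\,\|\theta_0\|_{L^\infty}\int_0^t\|u(s)\|_{L^2_{ul}}\,ds .
\]
By \cref{SNKbound}, applied with the multi-index $\beta+e_i$ of order $|\beta|+1>0$, this kernel norm is at most $C2^{N(|\beta|+1)}$. Since $N<0$, this is at most $C2^N$. The dissipative term is handled as in \cref{ln:l1-l2-bound}: put $\|\partial^{\alpha-\beta}\eta\|_{L^2}$ out front and the $L^\infty$ norm on the convolution. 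Using $\|\partial^\beta S_N\Delta K\|_{L^1}\le C2^{N(|\beta|+2)}\le C2^{N}$, again valid for $N<0$, this contributes at most $Ct\nu 2^N\|\theta_0\|_{L^\infty}$.

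Summing over the finitely many pairs $(\alpha,\beta)$ with $|\alpha|\le m$ gives the claimed bound $C2^N\|\theta_0\|_{L^\infty}\big(t\nu+\int_0^t\|u\|_{L^2_{ul}}\,ds\big)$, with $C$ depending on $m$ and $\eta$.

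The only genuine point to check is that derivatives can be placed on the kernel, i.e.\ that $\partial^\beta(S_N\nabla K\stardot(\theta u)) = (\partial^\beta S_N\nabla K)\stardot(\theta u)$. This holds because $S_N\nabla K$ and all its derivatives lie in $L^1$ by \cref{SNKbound}, while $\theta u$ is bounded. The same justification lets us differentiate under the time integral, using the boundedness of $\theta$ and $u$ from \cref{T:SQG1}. The second place where care is needed is the restriction $N<0$: it is exactly what makes the powers $2^{N(|\beta|+1)}$ and $2^{N(|\beta|+2)}$ all bounded by $2^N$. Without it we would retain the higher powers, as in the factor $2^N+2^{2N}$ of \cref{lowfreqwithN}.
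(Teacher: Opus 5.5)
Your proposal is correct and is essentially the paper's argument: the paper also uses the product rule to split derivatives between $\eta$ and the low-frequency kernels, bounds each piece with the Minkowski argument of \cref{lowfreqwithN} and the kernel bounds of \cref{SNKbound}, and uses $N<0$ to reduce every power $2^{N(|\beta|+1)}$, $2^{N(|\beta|+2)}$ to $2^N$. The only difference is that the paper writes out $m=1$ and appeals to induction, whereas you treat general $m$ at once with the full Leibniz expansion and also justify moving derivatives onto the kernel, which the paper leaves implicit.
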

\begin{proof}
By \cref{lowfreqwithN} with $N<0$,
\begin{equation}\label{IL2}
\| l_N(t,x) \|_{L^2_x} \leq C2^{N}\| \theta_{0}\|_{L^{\infty}}\left(t\nu + \int_0^t \| u \|_{L^2_{ul}}\, ds\right).
\end{equation}
To estimate first order derivatives of $l_N$, as in the proof of \cref{lowfreqwithN}, let $$l(t,x) =l_N^1(t,x) +l_N^2(t,x),$$ where 
\begin{equation*}
\begin{split}
&l_N^1(t,x) := -\eta(x)\int_0^t S_{N}\nabla K \stardot (\theta u) (s,x) \, ds,\\
&l_N^2(t,x) :=  \nu\eta(x)\int_0^t S_{N}\Delta K \ast \theta(s,x) \, ds, 
\end{split}
\end{equation*} 
and note that for $1\leq j \leq 2$,  
\begin{equation*}
\begin{split}
\partial_j l_N^1(t,x)  &=  \partial_j\left (\eta(x)\int_0^t (S_{N}\nabla K \stardot (\theta u))(s,x)  \, ds \right)\\
&= \underbrace{ \partial_j\eta(x)\int_0^t (S_{N}\nabla K \stardot (\theta u))(s,x)  \, ds}_{=: i(t)}  + \underbrace{ \eta(x)\int_0^t ((\partial_jS_{N}\nabla K) \stardot (\theta u))(s,x)  \, ds }_{=: ii(t)}.
\end{split}
\end{equation*}
Thus,
$$\| \partial_j l_N^1(t,x)\|_{L^2_x}\leq \|i(t)\|_{L^2_x} + \|ii(t)\|_{L^2_x}.$$
We note, $$\|i(t)\|_{L^2_x}\leq C2^{N}\int_0^t\| \theta_0\|_{L^{\infty}} \| u \|_{L^2_{ul}}\, ds$$ by an argument identical to that used to bound $\|l_N^1 (t)\|_{L^2_{ul}}$ in the proof of \cref{lowfreqwithN}.  For $ii$, we apply \cref{SNKbound} and, again, and argument similar to that used to bound $\|l_N^1 (t)\|_{L^2_{ul}}$ in the proof of \cref{lowfreqwithN}, which yields $$\|ii(t)\|_{L^2_x}\leq C2^{2N}\int_0^t\| \theta_0\|_{L^{\infty}} \| u \|_{L^2_{ul}}\, ds.$$   

As $N<0$, we conclude that 
$$\|l_N^1(t,x) \|_{H^1_x} \leq C2^{N}\int_0^t\| \theta_0\|_{L^{\infty}} \| u(s) \|_{L^2_{ul}}\, ds.$$ 
By a similar argument,
\begin{equation*}
\begin{split}
&\| \partial_j l_N^2(t,x) \|_{L^2_x} = \nu\left \|  \partial_j\left (\eta(x)\int_0^t S_{N}\Delta K \ast \theta)(s,x)  \, ds\right) \right \|_{L^2_x} \\
&\leq  \nu\left\|\partial_j\eta(x)\int_0^t S_{N}\Delta K \ast \theta (s,x)  \, ds \right\|_{L^2_x} + \nu\left\|\eta(x)\int_0^t (\partial_j S_{N}\Delta K) \ast \theta (s,x)  \, ds \right\|_{L^2_x}.
\end{split}
\end{equation*}
We apply the estimate \eqref{ln:bound-on-low-freq-no-deriv} from \cref{lowfreqwithN} to bound the first term.  This gives
$$\nu\left\|\partial_j\eta(x)\int_0^t S_{N}\Delta K \ast \theta (s,x)  \, ds \right\|_{L^2_x} \leq Ct2^{2N}\nu \|\theta_0\|_{L^{\infty}}.$$
For the second term, utilizing the compact support of $\eta$ and Young's inequality, we have
\begin{equation*}
\begin{split}
\left\|\eta(x)\int_0^t (\partial_j S_{N}\Delta K) \ast \theta (s,x)  \, ds \right\|_{L^2_x} \leq C\|\eta\|_{L^{2}}\|\theta_0\|_{L^{\infty}} \int_0^t\left\| (\partial_j S_{N}\Delta K) (x) \right\|_{L^{1}_x} \, ds.
\end{split}
\end{equation*}
By \cref{SNKbound},
we conclude that
\begin{equation}
   \nu\left\|\eta(x)\int_0^t (\partial_j S_{N}\Delta K) \ast \theta (s,x)  \, ds \right\|_{L^2_x} \leq 2^{3N}Ct\nu \|\theta_0\|_{L^{\infty}}.
\end{equation}
Combining the estimates for the first and second term and noting that $N<0$, we have
\begin{equation*}
\begin{split}
&\| \partial_j l_N^2(t,x) \|_{L^2_x} \leq 2^{N}Ct\nu \|\theta_0\|_{L^{\infty}}.
\end{split}
\end{equation*}
This completes the proof of \cref{IH12} for the case $m=1$.  The lemma can be extended to general $m$ by induction.
\end{proof}

The next theorem gives an alternative to the Serfati identity. 
\begin{prop}\label{renormalized2}
Let $(\theta,u)$ be a smooth solution to ($SQG_{\nu}$) on $[0,T]$, given by \cref{T:SQG1}.  Then 
\begin{equation}\label{highfreq0limit}
\begin{split}
&u(t) - u_0 -U_{\infty}(t) = \lim_{N\rightarrow -\infty} (H_NK) \ast (\theta(t) - \theta_0)
\end{split}
\end{equation}
uniformly on compact subsets of $[0,T]\times \R^2$.
\end{prop}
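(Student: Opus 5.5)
The plan is to start from the Serfati identity \cref{FourierSerfati2}, which holds for every $N\in\Z$ with the same $U_\iny(t)$, since $U_\iny$ was shown to be independent of $N$ in \cref{theoremapriori}. Rearranging gives, for each $N$,
\begin{equation*}
u(t,x) - u_0(x) - U_\iny(t) - (H_NK)\ast(\theta(t)-\theta_0)(x) = -\int_0^t \left( S_{N}\nabla K \stardot (\theta u) -  \nu S_{N}\Delta K \ast \theta\right)(s,x) \, ds.
\end{equation*}
So it suffices to show that the right-hand side tends to zero uniformly on compact subsets of $[0,T]\times\R^2$ as $N\to-\infty$. In particular, the limit on the right of \cref{highfreq0limit} will then exist, and not merely along a subsequence as in \cref{SNulimit}.

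Fix a compact set $Q\subset\R^2$ and choose $\eta\in C_c^\iny(\R^2)$ with $\eta\equiv 1$ on $Q$. Then on $[0,T]\times Q$ the right-hand side above coincides with $l_N(t,x)$ as defined in \cref{ln:def-of-I-and-II}. By \cref{IH12}, for $N<0$ and every $m\ge1$,
\begin{equation*}
\sup_{t\in[0,T]}\|l_N(t)\|_{H^m_x} \le C2^N\|\theta_0\|_{L^\iny}\left(T\nu + \int_0^T\|u(s)\|_{L^2_{ul}}\,ds\right).
\end{equation*}
The time integral is finite by \cref{L2ulestDSQG}, because $U_\iny\in C([0,T])$ is bounded. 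Taking $m=2$ and applying the Sobolev embedding $H^2(\R^2)\subset L^\iny(\R^2)$ gives $\sup_{[0,T]\times\R^2}|l_N|\le C2^N\to0$. Hence the convergence is uniform on $[0,T]\times Q$.

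The main point to check is that the constant in \cref{IH12} does not depend on $N$ for $N<0$. This holds because it comes from \cref{SNKbound} and the boundedness of $\eta$ and its derivatives. One also needs the bound on $\int_0^T\|u\|_{L^2_{ul}}$ to be independent of $N$, which it is, since \cref{L2ulestDSQG} involves no $N$. With these two facts the argument is direct: the only input beyond the Serfati identity is that the low-frequency term is small in $H^m$ on bounded sets, and that is exactly what \cref{IH12} provides.
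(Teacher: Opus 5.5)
Your proposal is correct and follows essentially the same route as the paper: multiply the Serfati identity \cref{FourierSerfati2} by a cutoff, bound the low-frequency term in $H^m$ by $C2^N$ using \cref{IH12} and \cref{L2ulestDSQG}, and conclude with the Sobolev embedding. Your remarks that the constants and $U_\infty$ do not depend on $N$ spell out what the paper's short proof leaves implicit.
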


\begin{proof}
Multiplying \cref{FourierSerfati2} by $\eta\in C^{\infty}_c([0,T]\times\R^2)$ gives
\begin{equation*}
\begin{split}
&\eta(t,x)(u(t,x) - u_0(x) -U_{\infty}(t)) = \eta(t,x) \int_0^t \left( S_{N}\nabla K \stardot (\theta u) - \nu S_{N}\Delta K \ast \theta\right)(s,x) \, ds  \\
&\qquad + \eta(t,x) ( (H_NK) \ast (\theta(t,x) - \theta_0(x))).  \\
\end{split}
\end{equation*}
By \cref{IH12}, \cref{L2ulestDSQG}, and the Sobolev Embedding Theorem, 
\begin{equation}\label{lowfreqto0uniform}\left \| \eta(t,x) \int_0^t \left( S_{N}\nabla K \stardot (\theta u) - \nu S_{N}\Delta K \ast \theta\right)(s,x) \, ds \right \|_{L^{\infty}([0,T]\times\R^2)} \rightarrow 0
\end{equation} 
as $N$ approaches negative infinity, giving \cref{highfreq0limit}.
\end{proof}

\begin{prop}\label{P:SerfatiAway}
Let $(\theta, u)$ be as in \cref{theoremapriori} and, assuming $U_\iny(0) = 0$, make the transformations in \cref{e:COVx,e:SolTranslation} to yield another solution $(\otheta, \ou)$ with initial data $(\theta_0, u_0 - U_\iny(0))$. If $(\theta,u)$ satisfy the Serfati identity in \cref{FourierSerfati2} for the vector field $V_\iny$, then $(\bar{\theta}, \ou )$ satisfy \cref{FourierSerfati2} in the form
\begin{align*}
	\ou(t) - u_0
		&= W_\iny(t)
				- \int_0^t \brac{(S_N \grad K) \stardot (\otheta \ou)
						- \nu (S_N \Delta K) * \otheta}
						(s) \, ds \\
		&\qquad\qquad
				+ (H_N K) * (\otheta(t) - \theta_0),
\end{align*}
where $W_\iny = V_\iny - U_\iny$.
\end{prop}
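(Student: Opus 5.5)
The plan is to avoid substituting $\ox$ directly into \cref{FourierSerfati2}. That substitution fails: the convolution in the low-frequency integral is evaluated at time $s$, while the shift $z(t)=\int_0^t U_\iny$ is frozen at time $t$. Instead I will show that the identity for $(\otheta,\ou)$ holds with \emph{some} spatially constant $W_\iny$, and then identify $W_\iny$.

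First, since $U_\iny(0)=0$, the initial data of $(\otheta,\ou)$ is $(\theta_0,u_0)$. By \cref{P:SQGnuNonunique} and \cref{L:ConstitutiveTranslated}, $(\otheta,\ou)$ is again a smooth solution of \cref{e:SQGnu} satisfying the homogeneous constitutive law. Its regularity is inherited from $(\theta,u)$, because $z\in C^1$ and $\nabla \ox=\Id$. The first half of the proof of \cref{theoremapriori} uses only these facts, so it applies verbatim. Define
\begin{align*}
W(t,x) &:= \ou(t,x)-u_0(x)+\int_0^t\brac{(S_N\grad K)\stardot(\otheta\ou)-\nu(S_N\Delta K)*\otheta}(s,x)\,ds \\
&\qquad - (H_NK)*(\otheta(t)-\theta_0)(x).
\end{align*}
That argument shows $W(t,x)=W_\iny(t)$, with $W_\iny$ continuous, independent of $N$ and $x$, and $W_\iny(0)=0$. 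It therefore remains to prove $W_\iny=V_\iny-U_\iny$.

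For the identification, let $G(t,x)$ be the analogous quantity for $(\theta,u)$, which by hypothesis equals $V_\iny(t)$, and consider $D(t,x):=W(t,x)-G(t,\ox)+U_\iny(t)$. This is spatially constant and vanishes at $t=0$, so it suffices to show $\prt_t D=0$ in $\mathcal{D}'((0,T))$ at one fixed $x$; equivalently, $D$ pointwise satisfies $D(t)=\int_0^t(\cdots)$ with zero integrand. I compute the terms as follows.
\begin{enumerate}
\item $\ou(t,x)-u(t,\ox)+U_\iny(t)=0$ exactly.
\item The initial-data terms differ by $u_0(\ox)-u_0(x)-(H_NK)*(\theta_0(\ox)-\theta_0(x))=S_Nu_0(\ox)-S_Nu_0(x)$, using the homogeneous law at $t=0$. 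This equals $\int_0^t U_\iny(s)\cdot\grad S_Nu_0(x+z(s))\,ds$.
\item For the high-frequency terms, $(H_NK)*\otheta(t)(x)=(H_NK)*\theta(t)(\ox)$ exactly, so they cancel.
\item In the low-frequency integrals, $\otheta\ou(s,x)=(\theta u)(s,x+z(s))-U_\iny(s)\theta(s,x+z(s))$. Hence
\begin{align*}
(S_N\grad K)\stardot(\otheta\ou)(s,x) &= (S_N\grad K)\stardot(\theta u)(s,x+z(s)) \\
&\qquad - U_\iny(s)\cdot (S_N\grad K)*\theta(s,x+z(s)).
\end{align*}
The corresponding term in $G(t,\ox)$ is evaluated at $x+z(t)$ instead of $x+z(s)$.
\end{enumerate}

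The main obstacle is this mismatch between $x+z(s)$ and $x+z(t)$. I handle it by writing $F(s,\cdot)(x+z(t))-F(s,\cdot)(x+z(s))=\int_s^t U_\iny(r)\cdot\grad F(s,\cdot)(x+z(r))\,dr$ and applying Fubini. This produces the terms $\int_0^t U_\iny(r)\cdot\grad\int_0^r[\cdots](s)\,ds$. Next I use the Serfati identity for $(\theta,u)$ differentiated in $x$, which kills $V_\iny$:
\[
\grad\int_0^r[\cdots](s,y)\,ds=\grad\bigl(u(r)-u_0-(H_NK)*(\theta(r)-\theta_0)\bigr)(y)=\grad S_N(u(r)-u_0)(y).
\]
It then remains to check that
\[
-\int_0^t U_\iny\cdot\grad S_N(u(r)-u_0)(x+z(r))\,dr \;-\; \int_0^t U_\iny\cdot S_N\grad K*\theta(r)(x+z(r))\,dr \;+\; \int_0^t U_\iny\cdot\grad S_Nu_0(x+z(r))\,dr \;=\;0.
\]
This reduces to the identity $\grad S_N u=(S_N\grad K)*\theta$, up to the orientation of $\grad$ versus $\stardot$ in the tensor contraction. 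The gradient kills the constant ambiguity, so this identity follows from the homogeneous law together with $\grad S_Nu=\sum_{j<N}\grad\Del_j u$. The latter sum converges in $L^\iny$ by \cref{SNKbound} and Bernstein's lemma, \cref{bernstein}.

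Throughout I work only with $S_N\grad K$, $S_N\Delta K\in L^1$, never with $S_NK$ itself. The $\nu$-terms are handled identically and are simpler, since they carry no $U_\iny\theta$ piece. With all the integrands cancelling, $D\equiv0$, which gives $W_\iny=V_\iny-U_\iny$.
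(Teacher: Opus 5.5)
Your proposal is correct apart from one sign slip, and it takes a genuinely different route from the paper.

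\textbf{The sign slip.} The hypothesis gives $\int_0^r[\cdots](s,y)\,ds = V_\iny(r) - S_N(u(r)-u_0)(y)$. Its gradient is therefore $-\grad S_N(u(r)-u_0)(y)$, not $+\grad S_N(u(r)-u_0)(y)$. The Fubini step contributes $-\int_0^t U_\iny(r)\cdot\grad\pr{\int_0^r[\cdots](s)\,ds}(x+z(r))\,dr$ to $D$, so the first term of your final display must carry a plus sign. With that correction the three terms combine to
\begin{align*}
D(t,x) = \int_0^t \brac{(U_\iny(r)\cdot\grad) S_N u(r) - (S_N\grad K)\stardot\pr{U_\iny(r)\theta(r)}}(x+z(r))\,dr,
\end{align*}
which vanishes by $\prt_i S_N u = (S_N\prt_i K)*\theta$, exactly as you claim. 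As written, your display leaves $-\grad S_N u(r) + 2\grad S_N u_0$ where $\grad S_N u(r)$ should be, and it does not cancel.

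\textbf{Comparison with the paper.} The paper never works at fixed $N$. It applies \cref{theoremapriori} to $(\otheta,\ou)$ to obtain some $W_\iny$. It then uses \cref{renormalized2} to send $N\to-\infty$, which removes the low-frequency time integral and with it the $x+z(s)$ versus $x+z(t)$ mismatch you identified. What remains involves only $u$, $u_0$ and $(H_NK)*\theta$. These commute with translation up to the spatially constant quantity $u(t)-\lim_N (H_NK)*\theta(t)$ from \cref{SNulimit}, and comparing with the same limit for $(\theta,u)$ gives $W_\iny=V_\iny-U_\iny$.

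That proof is short given the machinery, and it exhibits $V_\iny$ as a translation-invariant value at infinity. However, it depends on:
\begin{itemize}
\item the $L^2_{ul}$ bound \cref{L2ulestDSQG} and \cref{IH12}, through \cref{renormalized2};
\item subsequential limits;
\item splitting the limit of a difference into a difference of limits.
\end{itemize}

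Your computation needs only $L^\iny$ bounds on the smooth solution, the $L^1$ bounds of \cref{SNKbound}, and the homogeneous law. It yields the identity at every fixed $N$ with no limiting procedure. Moreover, it shows directly that $W(t,x)=G(t,\ox)-U_\iny(t)=V_\iny(t)-U_\iny(t)$ for all $(t,x)$. So your preliminary step of rerunning the proof of \cref{theoremapriori} for $(\otheta,\ou)$, including the transfer of regularity through a time-dependent translation, is in fact unnecessary.
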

\begin{proof}
    By \cref{P:SQG0Nonunique,theoremapriori} we know that $( \otheta,\ou)$ satisfy the Serfati identity for some continuous-in time-vector field $W_\iny(t)$ that is independent of $N$. Because $U_\iny(0) = 0$, we have $\ou(t, x) = u(t, \ox)-U_{\infty}(t)$, $\otheta(t, x) = \theta(t, \ox)$, $\ou_0 = u_0$, $\otheta_0 = \theta_0$, and $\ox(0, x) = x$. Applying \cref{renormalized2} gives
    \begin{align*}
        \ou(t, &x) - \ou_0(x)
            = W_\iny(t) + \lim_{N \to -\iny}
                (H_N K) * (\otheta(t, x) - \otheta_0(x)) \\
        &\implies u(t, \ox) - U_\iny(t) - u_0(x)
            = W_\iny(t) + \lim_{N \to -\iny}
                (H_N K) * (\theta(t, \ox) - \theta_0(x)) \\
        &\implies W_\iny(t)
            = u(t, \ox) - \lim_{N \to -\iny} (H_N K) * (\theta(t, \ox)  \\
            &\qquad\qquad\qquad\qquad
                - [u_0(x) - \lim_{N \to - \iny} (H_N K) * \theta_0(x)]
                    - U_\iny(t)
                \\
            &\qquad\qquad\quad\;\;
            = V_\iny(t) - U_\iny(t).
            \qedhere
         \end{align*}
\end{proof}

The following is an immediate corollary of \cref{P:SQG0Nonunique,P:SerfatiAway}.

\begin{cor}\label{C:SerfatiAway}
    In \cref{theoremapriori}, we can choose the vector field $U_{\infty}$ to be
    any continuous-in-time vector field as long as $U_{\infty}(0) = 0$.
\end{cor}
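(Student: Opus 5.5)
The plan is to start from a single smooth solution $(\theta,u)$ given by \cref{T:SQG1}. By \cref{theoremapriori}, it satisfies \cref{FourierSerfati2} with some continuous vector field $V_\iny$ satisfying $V_\iny(0)=0$. Now fix an arbitrary continuous $U_\iny$ with $U_\iny(0)=0$. We want a solution with the same initial data whose Serfati identity carries exactly $U_\iny$.

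To get it, apply the Galilean-type transformation \cref{e:COVx,e:SolTranslation}, but with the auxiliary field $Z_\iny := V_\iny - U_\iny$ in place of $U_\iny$. This field is continuous and vanishes at $t=0$.

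1. By \cref{P:SQGnuNonunique} (with $\nu>0$), the pair $(\otheta,\ou)$ is again a solution to \cref{e:SQGnu}. Since $Z_\iny(0)=0$, it has the same initial data $(\theta_0,u_0)$.
2. Its regularity and the constitutive law carry over. Translation in $x$ and subtraction of a spatially constant vector preserve $C^k_b$ regularity, and \cref{L:ConstitutiveTranslated} gives the homogeneous constitutive law. Thus $(\otheta,\ou)$ is a smooth non-decaying solution of the type covered by \cref{theoremapriori}.
3. By \cref{P:SerfatiAway}, $(\otheta,\ou)$ satisfies \cref{FourierSerfati2} with the field
\[
W_\iny = V_\iny - Z_\iny = U_\iny .
\]
This is exactly the claim: the vector field in \cref{theoremapriori} may be taken to be any prescribed continuous $U_\iny$ vanishing at $0$, at the cost of replacing the solution by a translated one with the same data.

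The only delicate point is interpretive. Once $(\theta,u)$ is fixed, $U_\iny$ is determined by it; \cref{renormalized2} shows $U_\iny$ is the limit of $u(t)-u_0-(H_NK)*(\theta(t)-\theta_0)$ as $N\to-\infty$. So the corollary must be read as a statement about the choice of solution, which is non-unique by \cref{P:SQGnuNonunique}.

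I would also check that the a priori bound \cref{L2ulestDSQG} stays consistent under this choice. It does, because it is stated with $|U_\iny(t)|$ on the right-hand side, and $\ou$ differs from a translate of $u$ only by the constant $Z_\iny(t)$.
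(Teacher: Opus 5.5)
Your proposal is correct and matches the paper's argument. The paper states the corollary as an immediate consequence of the translation non-uniqueness result together with \cref{P:SerfatiAway}, and choosing the translation field $V_\infty - U_\infty$ so that $W_\infty = V_\infty - (V_\infty - U_\infty) = U_\infty$ is exactly that step; your citation of \cref{P:SQGnuNonunique}, rather than the inviscid \cref{P:SQG0Nonunique} the paper names, is the more appropriate reference since \cref{theoremapriori} concerns $\nu>0$.
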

\begin{remark}
    Each choice of $U_{\infty}$ in \cref{theoremapriori} will yield a distinct solution to \cref{e:SQGnu} having the same initial conditions.
\end{remark}

\section{Construction of candidate for a weak solution}\label{S:SolutionCandidate}

In this section, we assume our initial data ($\theta_{0},u_{0}$) belongs to $L^{\infty}(\R^2)\times L^2_{ul}(\R^2)$, satisfies the homogeneous constitutive law, and satisfies div $u_0=0$.
\subsection{Preparing the initial data}
To establish existence of weak solutions, we will generate a sequence of smooth solutions ($\theta_k, u_k$) to $(SQG_{1/k})$ from the initial data $$(\theta_{k,0},u_{k,0})=(S_k\theta_{0}, S_k u_{0}).$$  Note that by Young's inequality and \cref{SnBound},
\begin{equation} \label{ln:uniform-bound-on-initial-data-of-sequence}
\begin{split}
&\| \theta_{k,0} \|_{L^{\infty}} = \| S_k\theta_{0} \|_{L^{\infty}} \leq C \| \theta_0 \|_{L^{\infty}}, \\
&\| u_{k,0} \|_{L^2_{ul}} = \| S_ku_{0} \|_{L^2_{ul}} \leq C \| u_{0} \|_{L^2_{ul}}.
\end{split}
\end{equation}
As a consequence of \cref{T:SQG1}, for an arbitrary time $T>0$, there exists a sequence of smooth, non-decaying solutions $(\theta_k, u_k)$ to ($SQG_{1/k}$) with initial data $(\theta_{k,0}, u_{k,0})$.  
 Moreover, by \cref{max} and \cref{ln:uniform-bound-on-initial-data-of-sequence},
\begin{equation}\label{ln:uniform-bound-on-theta-k}
    \|\theta_k\|_{L^\infty([0,T]; L^{\infty})} \leq \|\theta_{k,0}\|_{L^\infty} \leq \|\theta_0\|_{L^\infty}.
\end{equation}

We now show that $u_{k,0}$ converges to $u_0$ locally in $L^2(\R^2)$.

\begin{prop}\label{prop:convergence-of-initial-data-u}
Let $\eta\in C^{\infty}_c(\R^2)$.  Then $$ \eta S_k u_0 \rightarrow  \eta u_0 \quad \text{in }L^2(\R^2) \text{ as }k\rightarrow\infty.$$
\end{prop}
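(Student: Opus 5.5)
We want to show $\eta S_k u_0 \to \eta u_0$ in $L^2$ for $u_0 \in L^2_{ul}$ and $\eta\in C^\infty_c(\R^2)$. The plan is to localize. Fix $\psi\in C^\infty_c(\R^2)$ with $\psi\equiv 1$ on a neighborhood of $\supp\eta$, say on $\{x:\operatorname{dist}(x,\supp\eta)\le 1\}$. Then split
\begin{align*}
\eta S_k u_0 - \eta u_0 = \eta\big(S_k(\psi u_0) - \psi u_0\big) + \eta\, S_k\big((1-\psi)u_0\big),
\end{align*}
using $\eta\psi=\eta$. Since $\psi u_0\in L^2(\R^2)$, the first term is handled by the Fourier transform. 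Indeed, $\wh{\chi}_k(\xi)=\wh{\chi}(2^{-k}\xi)\to 1$ pointwise, with $|\wh\chi_k|\le C$. Plancherel and dominated convergence then give $\|S_k(\psi u_0)-\psi u_0\|_{L^2}\to 0$, and multiplying by the bounded function $\eta$ preserves this.

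The second term is the main obstacle: it is a tail contribution from far-away mass of $u_0$, and $u_0$ is merely in $L^2_{ul}$ with no decay. The point is that $\chi_k(x)=2^{2k}\chi(2^kx)$ concentrates as $k\to\infty$. For $x\in\supp\eta$, the integrand $\chi_k(x-y)(1-\psi(y))u_0(y)$ vanishes unless $|x-y|\ge 1$. Minkowski's inequality, exactly as in \cref{nablasntozero}, then gives
\begin{align*}
\big\|\eta\, S_k((1-\psi)u_0)\big\|_{L^2} \le \int_{|y|\ge 1}|\chi_k(y)|\,\|\eta(\cdot)u_0(\cdot-y)\|_{L^2}\,dy \le C\|u_0\|_{L^2_{ul}}\int_{|y|\ge 1}|\chi_k(y)|\,dy.
\end{align*}
Here $\|\eta(\cdot)u_0(\cdot-y)\|_{L^2}\le C_\eta\|u_0\|_{L^2_{ul}}$ uniformly in $y$, by covering $\supp\eta$ with finitely many unit balls. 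Changing variables, $\int_{|y|\ge1}|\chi_k(y)|\,dy=\int_{|z|\ge 2^k}|\chi(z)|\,dz\to 0$ as $k\to\infty$ because $\chi\in\Cal S\subset L^1$. Combining the two terms gives the claim.

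One caveat is that $\chi_k$ is defined via $\wh{\chi}_k = 1-\sum_{j\ge k}\wh{\varphi}_j$, but this equals $\wh\chi(2^{-k}\cdot)$ up to the indexing convention of the dyadic partition. Either way $\chi_k$ is an $L^1$-dilation of a fixed Schwartz function, which is all that is used.
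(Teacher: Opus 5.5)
Your proof is correct, but it is organized differently from the paper's.

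The paper does not split $u_0$. Instead it commutes $\eta$ past $S_k$, writing $\eta S_k u_0 - \eta u_0 = \psi[\eta,S_k]u_0 + \psi\bigl(S_k(\eta u_0)-\eta u_0\bigr)$ with $\psi\equiv 1$ on $\supp\eta$.

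\begin{itemize}
\item The second piece is the approximate-identity statement for $\eta u_0\in L^2$, the analogue of your first term.
\item The commutator is bounded using $|\eta(x)-\eta(y)|\le\|\nabla\eta\|_{L^\infty}|x-y|$, the first moment $\int|z||\chi(z)|\,dz$, and Minkowski's inequality. This gives the explicit bound $C2^{-k}\|u_0\|_{L^2_{ul}}$.
\end{itemize}

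Your near/far split of $u_0$ replaces the commutator by a far-field tail, which you control by $\int_{|z|\ge 2^k}|\chi(z)|\,dz$.

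The trade-off is as follows. The paper's route gives a clean $2^{-k}$ rate for the part that is not in $L^2$, and it mirrors the commutator estimates used elsewhere in the paper (e.g. \cref{LPRieszcommutator}); however, it needs $\eta$ to be Lipschitz. Your route uses only $\chi\in L^1$ and that $\eta$ is bounded with compact support, so it applies verbatim to rough cutoffs such as indicators of balls. Its tail term is in fact superpolynomially small, since $\chi\in\Cal S$.

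In both arguments the overall convergence comes without a rate, because each ultimately rests on the approximate identity applied to a general $L^2$ function.
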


\begin{proof}
Let $\psi$ be a smooth, nonnegative, compactly supported function which is identically $1$ on the support of $\eta$. Write 
\begin{equation*}
\| \eta S_k u_0 -  \eta u_0 \|_{L^{2}} \leq \|\psi [\eta, S_k] u_0 \|_{L^2} + \| S_k(\eta u_0) - \eta u_0 \|_{L^2}.
\end{equation*}
Since $\eta u_0$ belongs to $L^{2}(\R^2)$, the second term on the right hand side approaches $0$ as $k$ approaches $\infty$ by a standard approximation to the identity argument.  To see that the first term approaches $0$, we expand the commutator, which gives
\begin{equation*}
\begin{split}
&\abs{\psi(x)[\eta, S_k] u_0(x)} = \abs{\psi(x)2^{2k}\int_{\R^2} \chi(2^k(x-y))(\eta(x) - \eta(y)) u_0(y) \, dy}  \\
&\qquad \leq \abs{\psi(x)}\|\nabla \eta \|_{L^{\infty}} 2^{2k} \int_{\R^2} |\chi(2^k(x-y))||x-y| |u_0(y)| \, dy\\
&\qquad \leq \abs{\psi(x)}\|\nabla \eta \|_{L^{\infty}} 2^{-k} \int_{\R^2} |\chi(z)||z| |u_0(x-2^{-k}z)| \, dz.
\end{split}
\end{equation*}
Thus, by Minkowski's inequality for integrals,
\begin{equation*}
\begin{split}
&\| \psi[\eta, S_k] u_0\|_{L^2} \leq \|\nabla \eta \|_{L^{\infty}} 2^{-k} \int_{\R^2} |\chi(z)||z|\left(\int_{\R^2}|\psi(x)|^2 |u_0(x-2^{-k}z)|^2 \, dx \right)^{1/2} \, dz\\
&\qquad\qquad \leq C2^{-k} \sup_{z\in\R^2} \left(\int_{\R^2}|\psi(x)|^2 |u_0(x-2^{-k}z)|^2 \, dx \right)^{1/2} \leq C2^{-k} \|u_0\|_{L^2_{ul}}.
\end{split}
\end{equation*}
The desired convergence follows.
\end{proof}


\subsection{Passing to the limit of smooth solutions}
We now establish convergence of a sequence of smooth, non-decaying solutions ($\theta_k,u_k$) to ($SQG_{1/k}$), generated from initial data ($S_k\theta_0, S_ku_0$), to a candidate for a weak solution to ($SQG_0$).  

In what follows, to simplify calculations, for each $k$, given initial data $(S_k\theta_0, S_ku_0)$, we will choose the corresponding solution $(\theta_k, u_k)$ satisfying \cref{FourierSerfati2} with 
\begin{equation}\label{Uequals0}
U_{\infty}\equiv 0 \text{ on } [0,T].
\end{equation}
This choice is justified by \cref{C:SerfatiAway}. 

\begin{prop}\label{prop:existence-of-limit-of-theta-u}
     Suppose $(\theta_0,u_0) \in L^{\infty}(\R^2)\times L^2_{ul}(\R^2)$ satisfies the homogeneous constitutive law, with div $u_0=0$, and let $(\theta_k,u_k)$ be a solution to $(SQG_{1/k})$ on $[0,T]$ with initial data given by $(\theta_{k,0},u_{k,0})=(S_k\theta_{0}, S_k u_{0})$. Then there exists a pair $(\theta,u)$ such that, up to a subsequence, $$\theta_k\rightarrow\theta \text{ weak-$\star$ in }L^{\infty}([0,T]\times \R^2),$$ and 
     $$u_k \rightarrow u \text{ weak-$\star$ in }L^{\infty}([0,T]; L^2(K)) \text{ for each compact $K$ in $\R^2$}.$$ Further, $$\theta \in L^\infty([0,T]\times\R^2),$$ and $$u \in L^{\infty}([0,T];L^2_{ul}(\R^2)).$$
\end{prop}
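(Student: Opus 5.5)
The plan is to obtain uniform-in-$k$ bounds and then apply Banach--Alaoglu, using a diagonal argument to handle the local $L^2$ convergence on compact sets. First, by \cref{ln:uniform-bound-on-theta-k}, the sequence $(\theta_k)$ is bounded in $L^\infty([0,T]\times\R^2)$ by $\|\theta_0\|_{L^\infty}$. Since $L^\infty([0,T]\times\R^2)$ is the dual of the separable space $L^1([0,T]\times\R^2)$, Banach--Alaoglu yields a subsequence with $\theta_k \wstar \theta$, and weak-$*$ lower semicontinuity of the norm gives $\|\theta\|_{L^\infty}\le\|\theta_0\|_{L^\infty}$.

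Second, I would bound $u_k$ uniformly. By \cref{Uequals0} we have $U_\infty\equiv 0$ for each $k$. Applying \cref{L2ulestDSQG} with $\nu=1/k\le 1$, and using \cref{ln:uniform-bound-on-initial-data-of-sequence} to control the initial data, gives
\[
\sup_{t\in[0,T]}\|u_k(t)\|_{L^2_{ul}} \le C\bigl(\|u_0\|_{L^2_{ul}}+\|\theta_0\|_{L^\infty}+1\bigr)e^{CT\|\theta_0\|_{L^\infty}} =: M,
\]
which is independent of $k$. The key point is that the choice $U_\infty\equiv0$, justified by \cref{C:SerfatiAway}, removes the only possibly $k$-dependent term. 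Consequently, for each ball $B_R$ (covered by finitely many unit balls), $\|u_k\|_{L^\infty([0,T];L^2(B_R))}\le C_R M$.

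Third, I would extract the limit $u$. The space $L^\infty([0,T];L^2(B_R))$ is the dual of the separable space $L^1([0,T];L^2(B_R))$, so each ball admits a weak-$*$ convergent subsequence. Taking $R=1,2,3,\dots$ and passing to a diagonal subsequence (also refining the subsequence already chosen for $\theta_k$), the limits are consistent under restriction and so define a single $u\in L^\infty_{loc}$ with $u_k\wstar u$ in $L^\infty([0,T];L^2(K))$ for every compact $K$.

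It remains to show $u\in L^\infty([0,T];L^2_{ul})$. For each $x$, lower semicontinuity of the norm of the weak-$*$ limit restricted to $B_1(x)$ gives $\|u\|_{L^\infty([0,T];L^2(B_1(x)))}\le M$. I expect the main subtlety to lie here: the time supremum and the spatial supremum must be interchanged, and this requires care because the exceptional null sets in time can depend on $x$. To handle it, I would take a countable dense set of centers $x$ and use the fact that each unit ball is contained in a ball of radius $2$ centered at one of these points. This gives $\|u(t)\|_{L^2_{ul}}\le CM$ for a.e. $t$, which suffices.
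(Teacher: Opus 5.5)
Your proposal is correct and follows essentially the same route as the paper. Both arguments take a $k$-uniform $L^2_{ul}$ bound on $u_k$ from \cref{L2ulestDSQG} (using $U_\infty\equiv 0$ and \cref{ln:uniform-bound-on-initial-data-of-sequence}), apply Banach--Alaoglu with a diagonal argument over balls, and use weak-$*$ lower semicontinuity on unit balls to get $u\in L^\infty([0,T];L^2_{ul}(\R^2))$. Your countable-dense-centers step is a worthwhile addition: it justifies swapping the supremum over centers with the essential supremum in time, which the paper passes over with ``taking the supremum over $z$.''
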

\begin{proof}
Let $\eta\in C^{\infty}_c(\R^2)$ be a standard bump function, and, for each $R>0$, set $\eta_R (x)= \eta (x/R)$.  Note that with \eqref{L2ulestDSQG} and \eqref{ln:uniform-bound-on-initial-data-of-sequence}, for each $R>0$, $t\in [0,T]$, and $z\in \R^2$, we have the uniform bound
\begin{equation}\label{velunifbound}
\begin{split}
\| \eta_R(\cdot-z) u_k(t) \|_{L^2} &\leq CR\| u_k(t) \|_{L^2_{ul}} \\
&\leq CR(\| u_{k,0} \|_{L^2_{ul}} + \| \theta_{k,0} \|_{L^{\infty}} + 1/k)e^{Ct \| \theta_{k,0} \|_{L^{\infty}}}\\
&\leq CR(\| u_{0} \|_{L^2_{ul}} + \| \theta_{0} \|_{L^{\infty}} +1)e^{Ct \| \theta_{0} \|_{L^{\infty}}}.
\end{split}
\end{equation}
Restricting to $z=0$, it follows from a standard diagonalization argument that there exists $u$ belonging to $L^{\infty}([0,T]; L_{loc}^2(\R^2))$ such that, up to a subsequence, 
\begin{equation}\label{L2weakstarconv}
\begin{split}
&u_k \rightarrow u \text{ weak-$\star$ in }L^{\infty}([0,T]; L^2(K)) \text{ for each compact $K$ in $\R^2$}.
\end{split}   
\end{equation}

We claim that $u\in L^{\infty}([0,T]; L^2_{\text{ul}}(\R^2))$.  To see this, let $z\in\R^2$. 
By \cref{L2weakstarconv}, \cref{velunifbound}, and \cref{ln:uniform-bound-on-initial-data-of-sequence}, $$\| u \|_{L^{\infty}([0,T];L^2(B_1(z)))} 
\leq \sup_k \| u_k \|_{L^{\infty}([0,T];L^2(B_1(z)))} \leq C_0,$$
where $C_0$ depends on norms of the initial data and $T$ and is independent of $k$ and $z$.  Taking the supremum over $z$, we find that that $u\in L^{\infty}([0,T]; L^2_{\text{ul}}(\R^2))$.

By \cref{ln:uniform-bound-on-theta-k}, there also exists $\theta\in L^\infty([0,T]\times \R^2)$ such that, up to a subsequence, 
\begin{equation*}
\begin{split}
&\theta_k\rightarrow\theta \text{ weak-$\star$ in }L^{\infty}([0,T]\times \R^2).
\end{split}   
\end{equation*}
Finally, the fact that $\theta$ satisfies
$$\| \theta \|_{L^{\infty}([0,T]\times\R^2)} \leq \| \theta_0 \|_{L^{\infty}}$$
follows from \eqref{ln:uniform-bound-on-theta-k}.
\end{proof}

It follows immediately from \cref{prop:existence-of-limit-of-theta-u} that $u_k\to u$ in $\mathcal{D}'([0,T]\times\R^2)$.  In fact, $(u_k)$ converges to $u$ in $\mathcal{S}'([0,T]\times \R^2)$, as a consequence of the following lemma.
\begin{lemma}\label{Sconvergence}
 Let $(\theta_k,u_k)$ and $(\theta,u)$ be as in \cref{prop:existence-of-limit-of-theta-u}.  Then $u_k\rightarrow u$ in $\mathcal{S}'([0,T]\times\R^2)$.
\end{lemma}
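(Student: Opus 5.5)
The plan is to show that for every Schwartz test function $\phi \in \mathcal{S}([0,T]\times\R^2)$ we have $\int_0^T\int_{\R^2} (u_k - u)\cdot\phi \, dx\,dt \to 0$, by splitting $\phi$ into a compactly supported piece and a tail. Weak-$\star$ convergence in $L^\infty([0,T];L^2(K))$ for each compact $K$ (from \cref{prop:existence-of-limit-of-theta-u}) handles the compact piece. The uniform $L^2_{ul}$ bound on $u_k$ and $u$, combined with the rapid decay of $\phi$, makes the tail small uniformly in $k$.

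Concretely, fix $\eps>0$ and let $\eta_R(x)=\eta(x/R)$ be a cutoff equal to $1$ on $B_R$ and supported in $B_{2R}$. Write $\phi = \eta_R\phi + (1-\eta_R)\phi$. For the first piece, $\eta_R\phi \in L^1([0,T];L^2(B_{2R}))$, which is a subset of the predual of $L^\infty([0,T];L^2(B_{2R}))$. Hence
\[
\int_0^T\int (u_k-u)\cdot \eta_R\phi \to 0 \quad \text{as } k\to\infty
\]
for fixed $R$, along the subsequence of \cref{prop:existence-of-limit-of-theta-u}.

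For the tail, cover $\R^2$ by unit squares $Q_m$, $m\in\Z^2$. Each $Q_m$ lies in a ball of radius $1$ about some point, so $\|f\|_{L^2(Q_m)} \le \|f\|_{L^2_{ul}}$. By Cauchy--Schwarz on each square,
\[
\abs{\int_0^T\int (1-\eta_R)\phi\cdot f} \le \int_0^T \|f(t)\|_{L^2_{ul}} \sum_{|m|\gtrsim R} \|\phi(t)\|_{L^2(Q_m)}\,dt .
\]
Since $\phi$ is Schwartz, $\|\phi(t)\|_{L^2(Q_m)} \le C_\phi (1+|m|)^{-3}$ uniformly in $t$. The sum over $|m|\gtrsim R$ is therefore $O(R^{-1})$.

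We apply this with $f=u_k$ and $f=u$. The bound \cref{velunifbound}, which comes from \cref{L2ulestDSQG} with $U_\infty\equiv0$ and \cref{ln:uniform-bound-on-initial-data-of-sequence}, gives $\sup_k\|u_k\|_{L^\infty([0,T];L^2_{ul})}\le C_0$. \cref{prop:existence-of-limit-of-theta-u} gives $u\in L^\infty([0,T];L^2_{ul})$ with the same bound. So the tail contribution is at most $C C_0 T R^{-1}$, uniformly in $k$.

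To finish, choose $R$ so that the tail is below $\eps/2$, then choose $k$ large so that the compact part is below $\eps/2$. The only point needing care is the tail estimate: we must ensure the constant in the $L^2_{ul}$ bound for $u_k$ is independent of $k$, and this is exactly where the choice $U_\infty\equiv0$ and the uniform data bounds enter. The rest is routine. If the paper's intended convergence is for the full sequence rather than a subsequence, the argument above still applies along the subsequence already extracted.
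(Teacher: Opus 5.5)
Your proof is correct and follows essentially the paper's argument. You split the test function with a spatial cutoff and handle the compactly supported piece by the local weak-$\star$ convergence from \cref{prop:existence-of-limit-of-theta-u}. You make the tail $O(R^{-1})$ uniformly in $k$ using the decay of the test function and the $k$-independent $L^2_{ul}$ bound, which relies on $U_\infty\equiv 0$ and \eqref{ln:uniform-bound-on-initial-data-of-sequence}.

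The differences are cosmetic. You sum over unit squares with Cauchy--Schwarz, whereas the paper sums over annuli $B_{(j+1)R}\setminus B_{jR}$ using the scaling bound $\|f\|_{L^2(B_r)}\lesssim r\|f\|_{L^2_{ul}}$. You also test the compact piece directly against the predual $L^1([0,T];L^2(B_{2R}))$ instead of citing $\mathcal{D}'$-convergence.
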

\begin{proof}
Let $\rho\in\mathcal{S}([0,T]\times\R^2)$, and for $R>0$ let $a_R:\R^2\rightarrow\R$ be a smooth compactly supported function such that $a_R \equiv 1$ on $B_R(0)$, $0\leq a_R \leq 1$ on $B_{R+1}(0)$, and $a_R \equiv 0$ elsewhere. Then
\begin{equation}\label{Sconv}
\int_0^T\int_{\R^2} (u_k - u)\rho  = \int_0^T\int_{\R^2} (u_k - u)a_R\rho  + \int_0^T\int_{\R^2} (u_k - u)(1-a_R)\rho. 
\end{equation}  
For the second term on the right hand side, set $C_{k,R}= B_{(k+1)R}(0)\backslash B_{kR}(0)$ and write
\begin{equation*}
\begin{split}
&\left |\int_0^T\int_{\R^2} (u_k - u)(1-a_R)\rho \right | \leq  \int_{[0,T]\times B_R(0)^c} |u_k - u||\rho|  \leq \sum_{k\geq 1} \int_{[0,T]\times C_{k,R}} |u_k - u||\rho|\\
&\leq \sum_{k\geq 1}CkR \left(\| u_k \|_{L^1([0,T];L^2_{ul})} + \|u \|_{L^1([0,T];L^2_{ul})}\right) \| \rho  \|_{L^{\infty}([0,T];L^2(C_{k,R}))}\\
&\qquad \leq \frac{C}{R} \left(\| u_k \|_{L^1([0,T];L^2_{ul})} + \|u \|_{L^1([0,T];L^2_{ul})}\right) \leq \frac{C}{R},
\end{split}
\end{equation*}
where we used decay properties of $\rho\in\mathcal{S}([0,T]\times\R^2)$ and the uniform bound on $\| u_k \|_{L^2_{ul}}$, which follows from \eqref{L2ulestDSQG} and \eqref{ln:uniform-bound-on-initial-data-of-sequence}.  The factor of $kR$ in the second inequality above results from scaling; see for example Proposition 1.5 of \cite{Taniuchi}.  Now the second term on the right hand side of \cref{Sconv} can be made small by choosing $R$ sufficiently large.  Since $u_k \rightarrow u$ in $\mathcal{D}'([0,T]\times\R^2)$ by \cref{prop:existence-of-limit-of-theta-u}, given this choice of $R$, the first term on the right hand side of \cref{Sconv} is small for sufficiently large $k$.  Thus
$$\int_0^T\int_{\R^2} (u_k - u)\rho  \rightarrow 0.$$   
We conclude that $u_k\rightarrow u$ in $\mathcal{S}'([0,T]\times\R^2)$.
\end{proof}
    \begin{prop}\label{P:WeakTimeContinuityOfthetaSeq}
        Let $(\theta_k,u_k)$ be as in \cref{prop:existence-of-limit-of-theta-u}.
        For any $h \in C_c^2(\R^2)$ there exists $C_h > 0$ such that
        \begin{align*}
            \abs{\int_{\R^2} h(x) (\theta_k(t, x) - \theta_k(s, x)) \, dx}
                \le C_h \abs{t - s}
                \text{ for all } k \in \N, t, s \in [0, T].
        \end{align*}
    \end{prop}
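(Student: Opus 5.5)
We use the equation satisfied by the smooth solutions $(\theta_k,u_k)$ of $(SQG_{1/k})$ given by \cref{T:SQG1} and integrate it against $h$ in space. Since $\theta_k \in C^1((0,T];C^k_b)$ and $\dv u_k = 0$, for $0<s<t\le T$ we have
\begin{align*}
    \int_{\R^2} h(x)(\theta_k(t,x)-\theta_k(s,x))\,dx
        = \int_s^t \int_{\R^2} \Big( \grad h \cdot u_k\theta_k + \tfrac{1}{k}\,\Delta h\, \theta_k \Big)(\tau,x)\,dx\,d\tau.
\end{align*}
To derive this, write $u_k\cdot\grad\theta_k = \dv(\theta_k u_k)$ and integrate by parts. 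No boundary terms appear because $h$ has compact support. The case $s=0$ then follows by continuity of $\theta_k$ at $t=0$. We may take this continuity for granted from \cref{T:SQG1}; alternatively, we can pass to the limit $s\to 0^+$ in the identity, using the uniform bound on the integrand derived below.

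Next we bound the integrand uniformly in $k$ and $\tau$. Fix a ball $B_R$ containing $\supp h$. By \cref{ln:uniform-bound-on-theta-k}, the viscous term is at most $\norm{\Delta h}_{L^1}\norm{\theta_0}_{L^\infty}$, using $1/k\le 1$. For the nonlinear term, Cauchy--Schwarz gives
\[
\Big|\int \grad h\cdot u_k\theta_k\Big| \le \norm{\theta_0}_{L^\infty}\norm{\grad h}_{L^2}\norm{u_k(\tau)}_{L^2(B_R)} \le C_R\norm{\theta_0}_{L^\infty}\norm{\grad h}_{L^2}\norm{u_k(\tau)}_{L^2_{ul}},
\]
since $B_R$ is covered by finitely many unit balls. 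The uniform-in-$k$ bound on $\norm{u_k(\tau)}_{L^2_{ul}}$ for $\tau\in[0,T]$ is exactly \cref{velunifbound}. That bound comes from \cref{L2ulestDSQG} with $U_\infty\equiv 0$ (the choice \cref{Uequals0}) together with \cref{ln:uniform-bound-on-initial-data-of-sequence}.

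Integrating over $[s,t]$ then gives the Lipschitz estimate with
\[
C_h = C\big(\norm{\grad h}_{L^2}+\norm{\Delta h}_{L^1}\big)\norm{\theta_0}_{L^\infty}\big(\norm{u_0}_{L^2_{ul}}+\norm{\theta_0}_{L^\infty}+1\big)e^{CT\norm{\theta_0}_{L^\infty}},
\]
which is independent of $k$.

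The only delicate point is the uniformity in $k$ of the $L^2_{ul}$ bound on $u_k$. This is where the choice $U_\infty\equiv0$ matters: otherwise $|U_\infty(t)|$ in \cref{L2ulestDSQG} could depend on $k$. With \cref{Uequals0} in force, it is already supplied by \cref{velunifbound}. A minor additional point is justifying the time integration near $t=0$, which the regularity in \cref{T:SQG1} and the uniform integrand bound handle.
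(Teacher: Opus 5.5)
Your proof is correct and follows the paper's argument. You integrate the equation against $h$, move the derivatives onto $h$ using $\dv u_k=0$, and bound the two resulting terms with the maximum principle \cref{ln:uniform-bound-on-theta-k} and the $k$-uniform local $L^2$ bound on $u_k$. The paper states that last bound only as $\norm{u_k}_{L^\iny(0,T;L^2(\supp \grad h))}$, relying tacitly on \cref{velunifbound} and the choice $U_\iny\equiv0$. Your treatment of $s=0$ is more careful than the paper's, with one caveat about the ``alternative'' route: letting $s\to0^+$ only shows that the limit of $\int_{\R^2} h\,\theta_k(s)\,dx$ exists. Identifying that limit with $\int_{\R^2} h\,\theta_{k,0}\,dx$ still requires that $\theta_k$ attain its initial data, as supplied by \cref{T:SQG1}.
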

    \begin{proof}
        Each $(\theta_k,u_k)$ satisfies
        \begin{align*}
            \theta_k(t, x) - \theta_k(s, x)
                &= -\int_s^t( \dv (\theta_k u_k) - (1/k)\Delta\theta_k)(r, x) \, dr.
        \end{align*}
        Multiplying through by a function $h \in C^2_c(\R^2)$ and integrating over $\R^2$ gives
        \begin{align*}
            &\abs{\int_{\R^2} h(x) (\theta_k(t, x) \, dx - \theta_k(s, x)) \, dx} \\
                &\qquad= \abs{\int_s^t\int_{\R^2} (\nabla h \cdot  \theta_k u_k + (1/k)\Delta h \theta_k)(r,x) \, dr} \\
            &\qquad
            \le \abs{t - s}
                \norm{\nabla h}_{L^2} \norm{\theta_k}_{L^\iny([0, T] \times \R^2)}
                    \norm{u_k}_{L^\iny(0, T; L^2(\supp(\grad h)))} \\
            &\qquad\qquad
                +\abs{t - s} \norm{h}_{C^2} \norm{\theta_k}_{L^\iny([0, T] \times \R^2)} \\
            &\qquad
            \le C_h \abs{t - s}.
            \qedhere
        \end{align*}
    \end{proof}
The following is an immediate corollary of \cref{P:WeakTimeContinuityOfthetaSeq} and \cref{L:WeakStarSectionalConvergence}.
\begin{cor}\label{corollarythetatimecont}
Let $\theta$ be as in \cref{prop:existence-of-limit-of-theta-u}.  Then, after changing $\theta$ on a set of measure zero, for all $h\in L^1(\R^2)$ and all $t\in [0,T]$,
\begin{equation*}
\begin{split}
&\int_{\R^2} h(x) \theta(s,x) \, dx \rightarrow \int_{\R^2} h(x) \theta(t,x) \, dx \quad\text{  as }s\rightarrow t.
\end{split}
\end{equation*}
Moreover, for all $x \in \R^2$,
\begin{equation*}
\begin{split}
& \abs{\int_{\R^2} h(x) (\theta(s, x) - \theta(t, x)) \, dx}
                 \le C_h \abs{t - s}
                 \text{ for all } h \in C_c^2(\R^2).
\end{split}
\end{equation*}
\end{cor}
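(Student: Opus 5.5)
The plan is to apply \cref{L:WeakStarSectionalConvergence} with $f_k = \theta_k$ and $f = \theta$, after checking its three hypotheses from results already established.

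First, each $\theta_k$ lies in $C_b([0,T]\times\R^2)$. On $(0,T]$ this follows from the regularity $\theta_k \in C^1((0,T]; C^2_b(\R^2))$ given by \cref{T:SQG1}. At $t=0$ we use that the initial datum $S_k\theta_0$ is smooth and bounded. If continuity up to $t=0$ needs separate justification, it follows from the classical construction in \cite{AACK25}. Alternatively, it can be avoided, because the proof of \cref{L:WeakStarSectionalConvergence} only uses the functions $F_k(t)=\int h\,\theta_k(t)\,dx$, and these are Lipschitz on all of $[0,T]$ by the integrated equation.

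Second, by \cref{prop:existence-of-limit-of-theta-u}, along the chosen subsequence $\theta_k \wstar \theta$ in $L^\iny([0,T]\times\R^2)$.

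Third, \cref{P:WeakTimeContinuityOfthetaSeq} gives the equi-Lipschitz bound
$\abs{\int h(\theta_k(t)-\theta_k(s))}\le C_h\abs{t-s}$ for every $h\in C_c^2(\R^2)$, with $C_h$ independent of $k$. This uniformity rests on the bound \cref{ln:uniform-bound-on-theta-k} for $\theta_k$ and on the local $L^2$ bound \cref{velunifbound} for $u_k$, which comes from \cref{L2ulestDSQG} with the choice $U_\iny\equiv 0$ in \cref{Uequals0}.

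With these hypotheses verified, the ``moreover'' part of \cref{L:WeakStarSectionalConvergence}, namely \cref{e:WeakTimeContinuityOff}, gives exactly the two assertions of the corollary. After modifying $\theta$ on a null set of times, $\int h\,\theta(s)\to\int h\,\theta(t)$ as $s\to t$ for every $h\in L^1(\R^2)$, and the Lipschitz bound with constant $C_h$ holds for every $h\in C_c^2(\R^2)$. The spurious ``for all $x$'' in the statement is vacuous, since $x$ is integrated out.

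The only step that needs any care is the continuity of $\theta_k$ at $t=0$, which is needed to place $\theta_k$ in $C_b([0,T]\times\R^2)$. I would handle it through the remark above: the lemma's argument only needs the Lipschitz property of $F_k$ on the closed interval $[0,T]$, and that property is supplied by \cref{P:WeakTimeContinuityOfthetaSeq}. Everything else is direct citation.
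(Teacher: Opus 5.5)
Your proposal is correct and matches the paper's argument: the paper states the corollary as an immediate consequence of \cref{L:WeakStarSectionalConvergence} with $f_k=\theta_k$, using \cref{prop:existence-of-limit-of-theta-u} for the weak-$*$ convergence and \cref{P:WeakTimeContinuityOfthetaSeq} for the uniform Lipschitz bound. Your added point about the $C_b([0,T]\times\R^2)$ hypothesis is a reasonable tightening the paper leaves implicit: the lemma only uses that the functions $F_k$ are bounded and equi-Lipschitz on the closed interval, with the pointwise-in-time bounds supplied by \cref{ln:uniform-bound-on-theta-k}.
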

We are now in position to show that the limit ($\theta,u$), as given in \cref{prop:existence-of-limit-of-theta-u}, satisfies the homogeneous constitutive law in \cref{weaksolutiondef2}. 
\begin{lemma}\label{constlaw}
Let ($\theta,u$) be as in \cref{prop:existence-of-limit-of-theta-u}.  Then for a.e. $(t,x)\in [0,T]\times\R^2$, ($\theta,u$) satisfies the homogeneous constitutive law.
\end{lemma}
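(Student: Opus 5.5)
Fix $j\in\Z$. For each $k$, \cref{T:SQG1} gives $\Del_j u_k = (\Del_j K)*\theta_k$ pointwise on $(0,T]\times\R^2$. The plan is to pass to the limit on both sides in the sense of distributions on $(0,T)\times\R^2$. We then identify the limits as $\Del_j u$ and $(\Del_j K)*\theta$, which are locally integrable functions, so equality as distributions gives equality a.e.

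Fix a test function $\rho\in C_c^\infty((0,T)\times\R^2)$. For the right-hand side, Fubini gives
\begin{align*}
\int_0^T\!\!\int_{\R^2} ((\Del_j K)*\theta_k)\rho\,dx\,dt=\int_0^T\!\!\int_{\R^2}\theta_k\,(\widetilde{\Del_j K}*\rho)\,dx\,dt,
\end{align*}
where $\widetilde{g}(x)=g(-x)$. Since $\Del_j K\in L^1$ by \cref{SNKbound}, and $\rho\in L^1$, the function $\widetilde{\Del_j K}*\rho$ lies in $L^1([0,T]\times\R^2)$. The weak-$\star$ convergence $\theta_k\to\theta$ in $L^\infty([0,T]\times\R^2)$ from \cref{prop:existence-of-limit-of-theta-u} then yields convergence to $\int\!\int ((\Del_j K)*\theta)\rho$.

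For the left-hand side, write
\begin{align*}
\int_0^T\!\!\int_{\R^2}(\Del_j u_k)\rho=\int_0^T\!\!\int_{\R^2}u_k\cdot(\widetilde{\varphi_j}*\rho).
\end{align*}
Here $\widetilde{\varphi_j}*\rho$ is a Schwartz function on $[0,T]\times\R^2$ (smooth in $t$, Schwartz in $x$), but it is not compactly supported. This is why local weak-$\star$ convergence alone is not enough. Instead I would invoke \cref{Sconvergence}: $u_k\to u$ in $\mathcal S'([0,T]\times\R^2)$, so the pairing converges to $\int\!\int u\cdot(\widetilde{\varphi_j}*\rho)=\int\!\int(\Del_j u)\rho$. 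This step is the main obstacle, since it requires the uniform $L^2_{ul}$ control of \cref{L2ulestDSQG} to handle the tails.

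It remains to check that $\widetilde{\varphi_j}*\rho$ belongs to the test class used in \cref{Sconvergence}. It does: its derivatives decay faster than any polynomial in $x$, uniformly in $t$. If a direct argument is preferred instead, I would split with the cutoff $a_R$ exactly as in that proof. The far part is bounded by $C/R$ using the rapid decay of $\varphi_j$ and the uniform bound $\sup_k\|u_k\|_{L^\infty L^2_{ul}}$. The near part converges by weak-$\star$ convergence in $L^\infty([0,T];L^2(B_{R+1}))$.

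Combining the two limits gives $\Del_j u=(\Del_j K)*\theta$ as distributions on $(0,T)\times\R^2$. Both sides are locally integrable: $\Del_j u\in L^\infty L^2_{ul}$ by \cref{SnBound}, and $(\Del_j K)*\theta\in L^\infty$ by Young's inequality. Hence the identity holds for a.e. $(t,x)$. Taking the countable union of the exceptional null sets over $j\in\Z$ gives the homogeneous constitutive law a.e. on $[0,T]\times\R^2$.
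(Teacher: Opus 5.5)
Your proposal is correct and follows the paper's proof: $\Del_j u_k\to\Del_j u$ comes from the $\mathcal{S}'$ convergence of \cref{Sconvergence}, $(\Del_j K)*\theta_k\to(\Del_j K)*\theta$ comes from weak-$\star$ convergence of $\theta_k$, and uniqueness of limits gives the a.e.\ identity. Your version additionally spells out the duality step: pairing against $\widetilde{\Del_j K}*\rho\in L^1$ and $\widetilde{\varphi_j}*\rho\in\mathcal{S}$, the local integrability of both sides, and the countable union over $j$, all of which the paper leaves implicit.
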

\begin{proof}
Note that by \cref{Sconvergence}, for each $j\in\Z$,
$$ \dot{\Delta}_j u_k \longrightarrow \dot{\Delta}_j u$$
in $\mathcal{S'}([0,T]\times\R^2)$.  Similarly, by weak-$\star$ convergence of ($\theta_k$) to $\theta$ in $L^{\infty} ([0,T]\times \R^2)$, 
$$(\dot{\Delta}_jK)\ast {\theta}_k \longrightarrow  (\dot{\Delta}_jK)\ast \theta$$
in $\mathcal{S'}([0,T]\times\R^2)$.
Thus, since for each $k$, $(\theta_k,u_k)$ satisfies the homogeneous constitutive law, and by uniqueness of weak limits, ($\theta,u$) satisfies $$\dot{\Delta}_j u(t,x) = (\dot{\Delta}_jK)\ast \theta(t,x)$$
for a.e. $(t,x)\in [0,T]\times\R^2$.
\end{proof}
We can in fact show that, after changing $\theta$ and $u$ on a set of measure zero, $\Del_j u$ and $(\Del_jK)\ast\theta$ are continuous and equal on $[0,T]\times\R^2$.
\begin{cor}\label{constlaweverywhere}
Let $(\theta,u)$ be as in \cref{prop:existence-of-limit-of-theta-u}.  Then, after changing $\theta$ and $u$ on a set of measure zero, we have for each $j\in \Z$, $(\Del_jK)\ast\theta \in C([0,T]\times\R^2)$ and 
$$\Del_j u(t,x) = (\Del_jK)\ast\theta(t,x) \qquad \text{for all } (t,x)\in [0,T]\times \R^2.$$
\end{cor}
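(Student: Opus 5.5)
We would first fix the representative of $\theta$ given by \cref{corollarythetatimecont}, so that $t\mapsto \theta(t)$ is weak-$*$ continuous in $L^\iny(\R^2)$ (tested against $L^1$ functions), and then \emph{define} the representative of $\Del_j u$ to be $(\Del_j K)*\theta$. The key observation is that, for fixed $j$, the kernel $\Del_j K$ lies in $L^1(\R^2)$ by \cref{SNKbound}. So for each $(t,x)$ the quantity
\[
(\Del_j K)*\theta(t,x) = \int_{\R^2} (\Del_j K)(x-y)\,\theta(t,y)\,dy
\]
is well defined and bounded by $C\|\theta_0\|_{L^\iny}$, with no almost-everywhere ambiguity once $\theta(t)$ is fixed for every $t$.

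To prove continuity on $[0,T]\times\R^2$, take $(s,y)\to(t,x)$ and write
\[
(\Del_jK)*\theta(s,y)-(\Del_jK)*\theta(t,x) = \int \bigl[(\Del_jK)(y-z)-(\Del_jK)(x-z)\bigr]\theta(s,z)\,dz + \int (\Del_jK)(x-z)\bigl[\theta(s,z)-\theta(t,z)\bigr]dz .
\]
The first term is bounded by $\|\theta_0\|_{L^\iny}\|\Del_jK(\cdot+y-x)-\Del_jK\|_{L^1}$, which tends to $0$ by continuity of translation in $L^1$. The second term tends to $0$ by \cref{corollarythetatimecont} applied with $h=(\Del_jK)(x-\cdot)\in L^1$. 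Alternatively, since $\Del_jK$ is Schwartz, we can use the Lipschitz bound together with the $C^2_c$ approximation argument from \cref{L:WeakStarSectionalConvergence}. Either way, $(\Del_jK)*\theta\in C([0,T]\times\R^2)$.

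Next we handle $u$. By \cref{constlaw}, $\Del_j u=(\Del_jK)*\theta$ almost everywhere, and $\Del_j u = \varphi_j * u$ is smooth in $x$ for a.e. $t$ because $u\in L^\iny L^2_{ul}$. For a.e. $t$, both sides are continuous in $x$ and agree for a.e. $x$, hence for all $x$. To get equality at every $t$, we would redefine $u$ on the null set of bad times $E\subset[0,T]$. The most natural candidate is the Serfati-type representation suggested by \cref{SNulimit}: for $t\in E$, set
\[
u(t) := \lim_{N\to-\iny} (H_NK)*\theta(t) + A(t),
\]
with $A$ chosen by continuity, or simply $A(t)=0$. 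Then \cref{constlawapp} gives
\[
\Del_j u(t) = \Del_j\lim_N (H_NK)*\theta(t) = (\Del_jK)*\theta(t),
\]
since $\Del_j$ annihilates constants and commutes with the limit.

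The main obstacle is justifying this redefinition at the exceptional times. We must show that $\lim_N (H_NK)*\theta(t)$ exists, in $L^2_{loc}$ or tempered distributions, for every $t$, and not just for the good times where \cref{SNulimit} applies. If that is hard, the fallback is a density argument. Pick good times $t_n\to t$. Then $u(t_n)$ is bounded in $L^2_{ul}$, so a subsequence converges weak-$*$ locally, and we define $u(t)$ as this limit. Since $\Del_j$ is continuous under this convergence, tested against Schwartz functions after the tail bound in \cref{Sconvergence}, we obtain
\[
\Del_j u(t)=\lim_n \Del_j u(t_n) = \lim_n (\Del_jK)*\theta(t_n) = (\Del_jK)*\theta(t),
\]
where the last equality uses the continuity established above. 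This keeps $u\in L^\iny([0,T];L^2_{ul})$ and changes $u$ only on a null set.
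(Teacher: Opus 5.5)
Your proof is correct and follows the paper's route: continuity of $(\Del_jK)\ast\theta$ in space because $\Del_jK\in L^1$ (indeed Schwartz), continuity in time from \cref{corollarythetatimecont} with $h=(\Del_jK)(x-\cdot)$, and then equality everywhere from \cref{constlaw}. Your translation estimate, which is uniform in $s$, gives genuine joint continuity, and your fallback makes explicit the modification of $u$ that the paper's two-line proof leaves implicit: at the null set of bad times, redefine $u(t)$ as a local weak limit of $u(t_n)$ along good times $t_n\to t$, and pass to the limit in $\Del_j$ using the tail bound.

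Drop the first redefinition you suggest, via $\lim_{N\to-\infty}(H_NK)\ast\theta(t)$. \cref{SNulimit} assumes the constitutive law at every $t$, so invoking it here is circular at exactly the exceptional times.
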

\begin{proof}
Continuity of $(\Del_jK)\ast\theta$ with respect to the spatial variable holds because $\Del_j K \in \mathcal{S}(\R^2)$, while continuity in time, after changing $\theta$ on a set of measure zero, follows from \cref{corollarythetatimecont} with $h=\Del_j K$.  The equality in \cref{constlaweverywhere} then follows from \cref{constlaw}.  
\end{proof}

The following lemma will be useful when passing to the limit of the nonlinear term in the weak formulation.

\begin{lemma}\label{nablasntozerolemma}
Let $(\theta, u)$ be as in \cref{prop:existence-of-limit-of-theta-u}.  Then, up to a subsequence,  
$$ u(t) - u_0 = \lim_{N\rightarrow -\infty} (H_NK) \ast (\theta(t) - \theta_0)  $$
uniformly on compact subsets of $[0,T]\times \R^2$.
\end{lemma}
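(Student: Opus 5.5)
The approach is the same as in \cref{SNulimit}, now applied to the difference $u(t)-u_0$ and combined with the normalization \cref{Uequals0}. By \cref{constlaweverywhere}, $(\theta,u)$ satisfies the homogeneous constitutive law at every $(t,x)$. By hypothesis, $(\theta_0,u_0)$ also satisfies it. As in \cref{constlawapp}, we therefore have for each $N\in\Z$
\begin{align*}
    u(t) - u_0 - (H_N K)\ast(\theta(t)-\theta_0) = S_N(u(t)-u_0).
\end{align*}
So it suffices to show that $\eta S_N(u-u_0)\to 0$ in $L^\iny([0,T]\times\R^2)$ along a subsequence, for every $\eta\in C_c^\iny([0,T]\times\R^2)$.

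The first step shows that $S_N(u-u_0)$ is asymptotically constant in $x$. This is exactly the argument of \cref{SNulimit}. Minkowski's inequality and $\norm{\nabla\chi_N}_{L^1}\le C2^N$ give $\norm{\eta\nabla S_N(u-u_0)}_{L^\iny_tH^m_x}\le C2^N(\norm{u}_{L^\iny L^2_{ul}}+\norm{u_0}_{L^2_{ul}})$. Here we use $u\in L^\iny([0,T];L^2_{ul})$ from \cref{prop:existence-of-limit-of-theta-u}. Sobolev embedding then gives $\eta\nabla S_N(u-u_0)\to0$ uniformly. Together with the uniform $H^m$ bound on $\eta S_N(u-u_0)$ from \cref{etasnuisbounded}, we can extract a subsequence such that $\eta S_N(u-u_0)\to \eta A(t)$ uniformly for some function $A(t)$.

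The key step, and the main obstacle, is to show that $A\equiv 0$. This is where the choice $U_\iny\equiv 0$ for the approximations enters. For each $k$, the Serfati identity \cref{FourierSerfati2} with $U_\iny=0$ gives
\begin{align*}
    S_N(u_k(t)-u_{k,0}) = -\int_0^t \left(S_N\nabla K\stardot(\theta_k u_k) - \tfrac1k S_N\Delta K\ast\theta_k\right)ds.
\end{align*}
By \cref{IH12}, the uniform bounds \cref{velunifbound} and \cref{ln:uniform-bound-on-theta-k}, and Sobolev embedding, the right side is bounded by $C2^N$ in $L^\iny([0,T]\times\supp\eta)$, uniformly in $k$. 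Next I pass $k\to\infty$ at fixed $N$. Since $u_k\to u$ in $\mathcal S'$ by \cref{Sconvergence} and $u_{k,0}=S_ku_0\to u_0$ locally in $L^2$ by \cref{prop:convergence-of-initial-data-u}, we get $S_N(u_k-u_{k,0})\to S_N(u-u_0)$ in $\mathcal{S}'$, because $\chi_N\in\mathcal S$. The uniform bound then passes to the limit, giving $\norm{\eta S_N(u-u_0)}_{L^\iny}\le C2^N$. Letting $N\to-\iny$ forces $A=0$. For the weak-limit step I would pair against test functions and use lower semicontinuity of the $L^\iny$ norm under distributional convergence.

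Finally, combining the two steps, $\eta(u(t)-u_0-(H_NK)\ast(\theta(t)-\theta_0))\to0$ uniformly, which is the claim. Strictly speaking, the second step already yields uniform convergence to $0$ without needing a subsequence, so the first step mainly serves as a consistency check. I will still state the result for a subsequence, to match the statement.
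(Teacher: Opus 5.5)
Your proposal is correct and follows essentially the paper's argument. Both proofs use the same three ingredients: the Serfati identity for $(\theta_k,u_k)$ with $U_\infty\equiv 0$, rewritten as an identity for $S_N(u_k(t)-u_{k,0})$; the uniform-in-$k$ bound $C2^N$ from \cref{IH12}; and the $\mathcal{S}'$ convergence of \cref{Sconvergence}. The only difference is that you carry $\norm{\eta S_N(u_k-u_{k,0})}_{L^\infty}\le C2^N$ to the limit by lower semicontinuity under distributional convergence, whereas the paper interchanges pointwise limits. Your version makes the appeal to \cref{SNulimit} (and the subsequence) unnecessary, and avoids the pointwise-in-$(t,x)$ convergence of $S_N u_k$ that the paper's interchange relies on.
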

\begin{proof}
 By \cref{SNulimit}, there exists a function $A:[0,T]\rightarrow\R^2$ such that, up to a subsequence,
 $$S_N u(t,x) - S_N u_0(x)  \rightarrow A(t) - A(0)$$
 uniformly on compact subsets of $[0,T]\times \R^2$.  We claim that $A=0$.  To see this, we will show that $ S_N u(t,x) - S_N u_0(x)  \rightarrow 0$ pointwise on $[0,T]\times\R^2$.  
Note that by \cref{renormalized2} and our choice of $U_{\infty}\equiv 0$ in \cref{Uequals0}, for each fixed $k\in\N$,
\begin{equation}\label{conv1} u_k(t) - u_{k,0} = \lim_{N\rightarrow -\infty} (H_NK) \ast (\theta_k(t) - \theta_{k,0})
\end{equation}
uniformly on compact subsets of $[0,T]\times\R^2$, so that, by an argument similar to \cref{constlawapp}, 
\begin{equation}\label{ukconvtoU} \lim_{N\rightarrow -\infty } S_N (u_k - u_{k,0}) = 0
\end{equation}
uniformly on compact subsets of $[0,T]\times\R^2$.  Moreover, by \cref{Sconvergence}, for each fixed $N$, 
\begin{equation}\label{snuktosnu} S_N u_k \rightarrow S_N u 
\end{equation}
pointwise on $[0,T]\times\R^2$.

We write, for fixed $(t,x)$,
\begin{equation}\label{finallimit}
\begin{split}
&\qquad \lim_{N\rightarrow -\infty} [S_N( u(t,x) - u_0(x)) ]\\
& = \lim_{N\rightarrow -\infty}\lim_{k\rightarrow \infty}[S_N( u(t,x) - u_0(x)) - S_N( u_k(t,x) - u_{k,0}(x))] \\
&  +\lim_{N\rightarrow -\infty}\lim_{k\rightarrow \infty}  [S_N( u_k(t,x) - u_{k,0}(x)) ]\\
& = \lim_{N\rightarrow -\infty}\lim_{k\rightarrow \infty}  [S_N( u_k(t,x) - u_{k,0}(x)) ],
\end{split}
\end{equation}
where we used \cref{snuktosnu} to get the last equality.  To see that the last limit in \cref{finallimit} equals zero, note that it follows from \cref{IH12} and \cref{ln:uniform-bound-on-initial-data-of-sequence} that the limit in \cref{lowfreqto0uniform} is uniform in $k$.  Thus $S_N( u_k(t,x) - u_{k,0}(x)) $ converges to zero as $N\rightarrow -\infty$ at a rate which is uniform in $k$, allowing us to interchange the limits and apply \cref{ukconvtoU}.

We conclude that the desired limit holds pointwise, and therefore uniformly on compact subsets of $[0,T]\times \R^2$.  This completes the proof.
\end{proof}
We are now in position to establish weak time continuity of $u$.

\begin{prop}\label{P:WeakTimeContinuityofu}
Let $(\theta,u)$ be as in \cref{prop:existence-of-limit-of-theta-u}.  After changing $u$ on a set of measure zero,  
for all compactly supported $g\in L^2(\R^2)$ and all $t\in [0,T]$, 
$$  \int_{\R^2} g(x) u(s,x) \, dx \rightarrow \int_{\R^2} g(x)u(t,x) \, dx \qquad \text{as }s\rightarrow t.$$
\end{prop}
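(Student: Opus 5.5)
Our plan is to use the representation of $u$ given by \cref{nablasntozerolemma}: for fixed $N$, split
\begin{align*}
u(t) - u_0 = S_N(u(t)-u_0) + (H_N K) * (\theta(t) - \theta_0),
\end{align*}
which is valid for every $t$ by the argument in \cref{constlawapp} together with \cref{constlaweverywhere}. Here we first redefine $u$ on a null set of times so that the homogeneous constitutive law holds for all $t$. Since $\theta$ is weakly continuous in time by \cref{corollarythetatimecont}, the task reduces to three pieces: weak continuity of the high-frequency piece, uniform smallness of the low-frequency piece on $\supp g$ as $N\to-\infty$, and a uniform $L^2_{ul}$ bound used for density.

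\textbf{Step 1 (density).} Fix a ball $B$ containing $\supp g$. Since $\sup_t\|u(t)\|_{L^2(B)}\le C$ by \cref{prop:existence-of-limit-of-theta-u}, it suffices to prove the convergence for $g\in C_c^\infty(B)$. For general $g$ we approximate in $L^2(B)$; the error is bounded by $2C\|g-g_n\|_{L^2}$, uniformly in $s,t$.

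\textbf{Step 2 (low frequencies).} By \cref{nablasntozerolemma}, $S_N(u(t)-u_0)\to 0$ uniformly on $[0,T]\times B$ along a subsequence $N\to-\infty$. Hence, given $\eps>0$, we can choose $N$ with
\begin{align*}
\sup_{t}\abs{\int g\, S_N(u(t)-u_0)}<\eps.
\end{align*}

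\textbf{Step 3 (high frequencies, the main obstacle).} We must show that, for fixed $N$, $s\mapsto\int g\,(H_NK)*\theta(s)\,dx$ is continuous. Formally this equals $-\int \big((H_NK)*g\big)\,\theta(s)\,dx$, using oddness of $H_NK$. The difficulty is that $(H_NK)*g$ need not be in $L^1$, because $H_NK$ is only a singular, nonintegrable kernel, so \cref{corollarythetatimecont} does not apply directly. We handle this by splitting $H_N=\sum_{j\ge N}\Del_j$:
\begin{itemize}
\item For each $j$, $(\Del_jK)*g\in\mathcal{S}\subset L^1$, so every finite sum $\sum_{j=N}^{M}$ gives a term continuous in $s$ by \cref{corollarythetatimecont}.
\item For the tail $j>M$ we localize. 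Take $\psi\in C_c^\infty$ equal to $1$ on a neighborhood of $B$ and write $\theta=\psi\theta+(1-\psi)\theta$. On the piece $(1-\psi)\theta$, the tail kernel $\sum_{j>M}\Del_jK$ decays on $|x|\gtrsim1$ like $\sum_{j>M}2^{-j}|x|^{-3}$ (as in the proof of \cref{L:ffInt}), so this contribution is $O(2^{-M})\|\theta\|_{L^\infty}$ uniformly in $s$.
\item On the piece $\psi\theta$, we use $\|\sum_{j>M}\Del_jK*(\psi\theta)\|_{L^2}\le \|H_{M}(K*(\psi\theta))\|_{L^2}$. This tends to $0$ as $M\to\infty$, but we need that uniformly in $s$. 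We obtain it by moving $H_M$ onto $g$: the contribution is $\int (H_M g)\cdot K*(\psi\theta(s))$, which is bounded by $\|H_Mg\|_{L^2}\,C\|\theta_0\|_{L^\infty}\|\psi\|_{L^2}\to0$ uniformly in $s$, since $g\in C_c^\infty$.
\end{itemize}
This gives a uniform approximation of the high-frequency term by continuous functions of $s$, so it is continuous.

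\textbf{Step 4 (conclusion).} Combining Steps 1–3 with an $\eps/3$ argument yields the stated convergence for all $t$. Note that $u_0$ is a fixed term and drops out of the differences.
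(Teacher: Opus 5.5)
\textbf{Verdict.} Your proof is correct. Its skeleton is the paper's, but Step~3 takes a detour around an obstacle that does not actually exist.

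\textbf{Where you agree with the paper.} You split $u$ into $S_N$ and $H_N$ pieces and make the $S_N$ piece small uniformly in time via \cref{nablasntozerolemma}. You use the homogeneous constitutive law to rewrite the $H_N$ piece as $\theta$ tested against a kernel and appeal to \cref{corollarythetatimecont}. You finish by density using the uniform $L^2$ bound on a ball. All of this matches the paper.

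\textbf{Step~3.} For $g\in C_c^\iny(\R^2)$ we have $\FTF\bigl((H_NK)*g\bigr)=(1-\wh{\chi}_N)\wh{K}\,\wh{g}$. The multiplier $(1-\wh{\chi}_N)\wh{K}$ vanishes near $\xi=0$, the only point where $\wh K$ is singular, so it is smooth with bounded derivatives. Hence $(H_NK)*g\in\mathcal{S}(\R^2)\subset L^1$. Convolving with $g$ tames the singularity of $H_NK$ at the origin, and $H_N$ has already removed the slow $\abs{x}^{-2}$ decay of $K$.

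The paper makes this quantitative in one line:
\begin{itemize}
\item for $N\le j\le -1$, $\smallnorm{(\Del_jK)*g}_{L^1}\le C\smallnorm{g}_{L^1}$;
\item for $j\ge 0$, Bernstein's inequality \cref{bern2} gives $\smallnorm{(\Del_jK)*g}_{L^1}\le C2^{-j}\smallnorm{(\Del_jK)*\grad g}_{L^1}\le C2^{-j}\smallnorm{\grad g}_{L^1}$.
\end{itemize}
Summing gives $\smallnorm{(H_NK)*g}_{L^1}\le C(1+\abs{N})\smallnorm{g}_{W^{1,1}}$. Then \cref{corollarythetatimecont} applies at once with $h=(H_NK)*g$. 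Your minus sign from oddness is correct; the paper drops it harmlessly.

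\textbf{Your workaround is still sound.}
\begin{itemize}
\item Each $(\Del_jK)*g$ lies in $L^1$, so the finite sums are continuous in $s$.
\item The far-field tail is $O(2^{-M})$ uniformly in $s$, because $\smallabs{\Del_jK(z)}\le C2^{-j}\abs{z}^{-3}$ and $\abs{x-y}$ is bounded below there.
\item The near-field tail equals $\int (H_{M+1}g)\cdot K*(\psi\theta(s))$ (you wrote $H_M$, an index slip). It is bounded by $C\smallnorm{H_{M+1}g}_{L^2}\smallnorm{\psi}_{L^2}\smallnorm{\theta_0}_{L^\iny}$, uniformly in $s$.
\end{itemize}
So $s\mapsto\int g\,(H_NK)*\theta(s)$ is a uniform limit of continuous functions. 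The paper's route is shorter and gives an explicit $L^1$ bound. Yours uses only that $g\in L^2$ with compact support, so for this term no smoothing of $g$ is needed.

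\textbf{One point to tighten.} The paper is equally brief here: its proof ends with an essential limit. \Cref{constlaweverywhere} fixes $\Del_j u(t)$ for every $t$, which determines $u(t)$ only up to an additive constant at the modified times. For Step~2 to hold at every $t$, you must make the modification explicit. For example, define $u(t)$ at an exceptional time as the weak $L^2_{loc}$ limit of $u(t_n)$ along good times $t_n\to t$. This limit exists because Steps~2--3, applied on the good set, make $t\mapsto\int g\,u(t)$ uniformly continuous there for a countable dense family of $g$.
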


\begin{proof}
First assume $g \in C^{\infty}_c(\R^2)$.  For fixed $N<0$ and $s,t\in[0,T]$, write 
\begin{equation}\label{ulimit3terms}
\begin{split}
&\int_{\R^2} g(x) (u(t,x)-u(s,x)) \, dx =   \int_{\R^2}g(x) S_N (u(t,x)-u(s,x)) \, dx\\
&\qquad + \int_{\R^2} g(x) H_N (u(t,x)-u(s,x)) \, dx.
\end{split}
\end{equation}

For the second term on the right hand side of \cref{ulimit3terms}, we apply the homogeneous constitutive law to write
\begin{equation*}
\begin{split}
&\int_{\R^2} g(x) H_N (u(t,x)-u(s,x)) \, dx = \int_{\R^2}g(x) (H_NK)\ast (\theta(t,x)-\theta(s,x)) \, dx  \\
&\qquad = \int_{\R^2} ((H_NK)\ast g(x))  (\theta(t,x)-\theta(s,x)) \, dx.
\end{split} 
\end{equation*}
By Bernstein's Lemma,  Young's inequality, and \cref{SNKbound},
\begin{equation}
\begin{split}
&\| (H_NK)\ast g \|_{L^1} \leq \sum_{j=N}^{-1} \| (\Del_jK)\ast g \|_{L^1} + \sum_{j\geq 0} \| (\Del_jK)\ast g \|_{L^1} \\
&\qquad \leq -N \|g \|_{L^1} + \sum_{j\geq 0} 2^{-j}\| (\Del_jK)\ast \nabla g \|_{L^1} \leq -CN\| g \|_{W^{1,1}}.
\end{split}
\end{equation}
We now invoke \cref{corollarythetatimecont} to conclude that
$$ \esslim_{s\rightarrow t} \int_{\R^2} g(x) H_N (u(t,x) - u(s,x)) \, dx = 0.$$
Now note that by \cref{nablasntozerolemma}, the first term on the right hand side of \cref{ulimit3terms} converges to $0$ as $N$ approaches negative infinity, where the convergence rate is uniform in $t$.  With this in mind, let $\epsilon>0$, and choose $N$ so that for almost every $s,t\in [0,T]$, 
\begin{equation*}
\abs{\int_{\R^2} g(x) S_N (u(t,x)-u(s,x)) \, dx} < \frac{\epsilon}{2}.
\end{equation*}
Given this $N$, let $\delta>$ be such that for almost every $t$,$s$ with $|t-s|<\delta$,
\begin{equation*}
\abs{\int_{\R^2} g(x) H_N (u(t,x) - u(s,x)) \, dx} < \frac{\epsilon}{2}.
\end{equation*}
Applying these inequalities to \cref{ulimit3terms} gives
$$\abs{\int_{\R^2} g(x) (u(t,x)-u(s,x)) \, dx}< \epsilon.$$

Now consider $g\in L^2(K)$ for $K\subset \R^2$ compact.  By density of $C^{\infty}_c(K)$ in $L^2(K)$ and the uniform-in-time bound
$$ \|u(t) - u(s)\|_{L^2(K)} \leq C(\|u(t)\|_{L^2_{ul}} + \| u(s)\|_{L^2_{ul}}) \leq C(T, \| u_0\|_{L^2_{ul}}, \|\theta_0\|_{L^{\infty}}),$$ 
we can conclude that
$$ \esslim_{s\rightarrow t} \int_{\R^2} g(x) (u(t,x) - u(s,x)) \, dx = 0. $$

\end{proof}

Having established the existence of weak limits, we turn to evaluating the convergence of the time derivative. 
\begin{lemma}\label{timeconvergence}
    Up to a subsequence, for every $\Psi\in C^{\infty}([0,T]\times\R^2)$,
$$ \int_0^T\int_{\R^2} \partial_t \theta_k(t,x) \Psi(t,x) \, dx \, dt \rightarrow \int_0^T\int_{\R^2} \partial_t \theta(t,x) \Psi(t,x) \, dx \, dt$$    as $k\rightarrow \infty$.
\end{lemma}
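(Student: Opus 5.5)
The plan is to move the time derivative onto the test function by integrating by parts in time, and then use the weak-$\star$ convergence of $(\theta_k)$ together with the pointwise-in-time convergence supplied by \cref{L:WeakStarSectionalConvergence}. The statement is only meaningful when $\Psi$ is localized in space, so that the integrals converge. I therefore read it with $\Psi(t,\cdot)$ supported in a fixed compact set, or with $\Psi\in C^\infty_c([0,T]\times\R^2)$. I interpret $\int\!\!\int \partial_t\theta\,\Psi$ distributionally, as the quantity on the right side of the identity below.

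For each $k$, $\theta_k\in C^1((0,T];C^k_b)$ is continuous on $[0,T]$. Integration by parts in $t$ therefore gives
\begin{equation*}
\int_0^T\!\!\int_{\R^2}\partial_t\theta_k\Psi\,dx\,dt
=\int_{\R^2}\theta_k(T)\Psi(T)\,dx-\int_{\R^2}\theta_{k,0}\Psi(0)\,dx-\int_0^T\!\!\int_{\R^2}\theta_k\,\partial_t\Psi\,dx\,dt.
\end{equation*}
The corresponding identity for $\theta$ serves as the definition of the right-hand side, once the representative of $\theta$ from \cref{corollarythetatimecont} is fixed. Near $t=0$ this step uses \cref{P:WeakTimeContinuityOfthetaSeq}: it shows that $t\mapsto\int h\theta_k(t)$ is Lipschitz, so the boundary term at $t=0$ is legitimate even though the regularity in \cref{T:SQG1} is stated only on $(0,T]$.

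Next I pass to the limit in each term, along the subsequence of \cref{prop:existence-of-limit-of-theta-u}.

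The bulk term converges because $\partial_t\Psi\in L^1([0,T]\times\R^2)$ under the support assumption, and $\theta_k\wstar\theta$ in $L^\infty([0,T]\times\R^2)$.

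For the $t=T$ term, \cref{P:WeakTimeContinuityOfthetaSeq} supplies the equi-Lipschitz hypothesis of \cref{L:WeakStarSectionalConvergence}. That lemma then gives $\theta_k(T)\wstar\theta(T)$ in $L^\infty(\R^2)$, and pairing with $\Psi(T)\in L^1$ handles this term.

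The $t=0$ term needs an identification step, and this is the main obstacle. \cref{L:WeakStarSectionalConvergence} yields $\theta_k(0)\wstar\theta(0)$, but we must show $\theta(0)=\theta_0$. For this I use $\theta_{k,0}=S_k\theta_0$ and pair it with $h=\Psi(0)\in L^1$. This gives $\int S_k\theta_0\,h=\int\theta_0\,(\chi_k\ast h)$. Since $\chi_k$ is an approximate identity ($\wh{\chi}_k(0)=1$ for $k\ge0$), $\chi_k\ast h\to h$ in $L^1$, so $\int\theta_{k,0}h\to\int\theta_0h$. Hence $\theta(0)=\theta_0$ as elements of $L^\infty$, after the modification on a null set permitted by \cref{corollarythetatimecont}.

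Combining the three limits gives the claimed convergence, with the limit expressed through $\theta(T)$, $\theta_0$ and $\theta$. This is exactly the form needed for the weak formulation \cref{e:WeakId}.
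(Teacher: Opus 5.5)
Your proof is correct, and its core is the same as the paper's: integrate by parts in time, then pair $\theta_k \wstar \theta$ in $L^\infty([0,T]\times\R^2)$ with $\partial_t\Psi\in L^1$.

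The difference is the boundary terms. The paper writes $\int_0^T\!\int\partial_t\theta_k\Psi=-\int_0^T\!\int\theta_k\,\partial_t\Psi$ and reads the limit as $-\int_0^T\!\int\theta\,\partial_t\Psi$. That identity holds only when $\Psi$ vanishes at $t=0$ and $t=T$. In the lemma's one application, \cref{prop:fourier-serfati-for-theta-u}, it is used only as plain weak-$\star$ convergence of $\int_0^T\!\int\partial_t\phi\,\theta_k$. The terms $\int\phi(T)\theta_k(T)$ and $\int\phi(0)\theta_{k,0}$ are passed to the limit separately there: the first via \cref{L:WeakStarSectionalConvergence} with \cref{P:WeakTimeContinuityOfthetaSeq}, the second via $S_k\theta_0\wstar\theta_0$.

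You fold exactly those two arguments into the lemma. As a result, your version holds for every $\Psi\in C_c^\infty([0,T]\times\R^2)$, with the natural meaning of the right-hand side. It also yields $\theta(0)=\theta_0$ for the weakly continuous representative, which the paper does not state.

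That identification is not needed for the convergence itself. $\int S_k\theta_0\,\Psi(0)\,dx\to\int\theta_0\Psi(0)\,dx$ follows directly from your approximate-identity argument, so the $t=0$ term is less of an obstacle than you suggest.
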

\begin{proof}
By \cref{max}, up to a subsequence, $(\theta_k)$ converges weak-$\star$ in $L^{\infty} ([0,T]\times \R^2)$.  Thus, for $\Psi\in C^{\infty}_c([0,T] \times \R^2)$, 
\begin{equation*}
\begin{split}
&\int_0^T\int_{\R^2} \partial_t \theta_k(t,x) \Psi(t,x) \, dx \, dt = - \int_0^T\int_{\R^2}  \theta_k(t,x) \partial_t \Psi(t,x) \, dx \, dt  
\end{split}
\end{equation*}
converges to $$ - \int_0^T\int_{\R^2}  \theta(t,x) \partial_t \Psi(t,x) \, dx \, dt = \int_0^T\int_{\R^2}  \partial_t\theta(t,x)  \Psi(t,x) \, dx \, dt,$$
as desired.
\end{proof}

\section{Existence of a weak solution}\label{S:Existence}

\noindent We devote this section to showing that $(\theta,u)$, given by \cref{prop:existence-of-limit-of-theta-u}, is a weak solution to ($SQG_0$) as in \cref{weaksolutiondef2}. The resulting series of Propositions leads, at the very end of this section, to the proof of \cref{T:MainResult}.

\medskip

\noindent {\bf Convergence of the nonlinear term.} As one would expect, the key step in showing that $(\theta,u)$ is a weak solution to ($SQG_0$) is establishing convergence of the nonlinear term. We address this convergence in the following proposition.
\begin{prop}\label{prop:nonlinear-term-convergence}
Let $(\theta_k, u_k)$ and $(\theta,u)$ be as in \cref{prop:existence-of-limit-of-theta-u}.  Then for any $\phi\in C^{\infty}_c([0,T]\times\R^2)$, up to a subsequence, 
\begin{equation}\label{nonlinearterm}
\int_0^T\int_{\R^2} \theta_k\nabla\phi\cdot u_k \, dx \, dt \rightarrow \int_0^T\int_{\R^2} \theta\nabla\phi \cdot u \, dx\, dt
\end{equation} 
as $k\rightarrow\infty$.
\end{prop}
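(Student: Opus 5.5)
We will split the velocity with the homogeneous constitutive law, $u_k = S_N u_k + (H_N K)*\theta_k$ and likewise $u = S_N u + (H_N K)*\theta$. The identity for $u_k$ follows from $S_N u_k + H_N u_k = u_k$, exactly as in \cref{constlawapp}, and the same argument applies to $u$ by \cref{constlaweverywhere}. Fix $\phi\in C_c^\infty([0,T]\times\R^2)$ with $\supp\phi\subset[0,T]\times B_R$. The nonlinear term then becomes
\begin{align*}
\int_0^T\!\!\int_{\R^2}\theta_k\nabla\phi\cdot u_k
= \int_0^T\!\!\int_{\R^2}\theta_k\nabla\phi\cdot S_N u_k
+ \int_0^T A_t(\theta_k(t),\theta_k(t))\,dt,
\end{align*}
where $A_t$ is the operator of \cref{L:ffInt} built from $\phi(t,\cdot)$. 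The $H_N$ piece is handled by \cref{L:ffInt}. By \cref{e:ffIntId}$_2$, $A_t(\theta_k,\theta_k)=\frac12 B_t(\theta_k,\theta_k)$, and by \cref{e:ffBound} this is bounded by $C\|\theta_0\|_{L^\infty}^2 2^{-N}$ uniformly in $k$ and $t$. The same bound holds for $\theta$. For $N$ large, the high-frequency contributions are therefore smaller than $\epsilon$, uniformly in $k$.

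The low-frequency piece $S_N u_k$ is where compactness comes in. It is spectrally localized, so Bernstein-type arguments as in the proof of \cref{SNulimit} (Minkowski plus \cref{SNKbound}) give, on $[0,T]\times B_R$, uniform-in-$k$ bounds on $S_N u_k$ and $\nabla S_N u_k$ in terms of $2^N$ and $\sup_k\|u_k\|_{L^\infty L^2_{ul}}$. That supremum is finite by \cref{L2ulestDSQG} and \cref{ln:uniform-bound-on-initial-data-of-sequence}. For time regularity we use the Serfati identity \cref{FourierSerfati2} with $U_\infty\equiv0$. Applying $S_N$ shows that $\partial_t S_N u_k$ is expressed through $S_N S_{N'}\nabla K \stardot(\theta_k u_k)$, $\nu S_N\Delta K*\theta_k$, and $S_N (H_{N'}K)*\partial_t\theta_k$. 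The last term, via $\partial_t\theta_k = -\dv(\theta_k u_k)+\nu\Delta\theta_k$, is again a convolution of $\theta_k u_k$ and $\theta_k$ with kernels that are smooth and frequency-localized in $|\xi|\lesssim 2^N$. A simpler route to the same conclusion is to write $\partial_t S_N u_k = \sum_{j\le N+1} (\Del_j K)*\partial_t\theta_k$ restricted to frequencies $\le 2^{N}$. In either form, $\|\partial_t S_N u_k\|_{L^\infty([0,T]\times B_R)}$ is bounded uniformly in $k$, with constants depending on $N$. Arzelà–Ascoli then gives, along a subsequence, $S_N u_k\to S_N u$ uniformly on $[0,T]\times B_R$; the limit is identified through \cref{Sconvergence}. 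Combined with $\theta_k\wstar\theta$, this yields
\begin{align*}
\int_0^T\!\!\int\theta_k\nabla\phi\cdot S_N u_k\to\int_0^T\!\!\int\theta\nabla\phi\cdot S_N u,
\end{align*}
since strong $L^1$ convergence of $\nabla\phi\cdot S_N u_k$ paired against a weak-$*$ convergent sequence gives convergence.

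To conclude, given $\epsilon$ we choose $N$ so that both high-frequency terms are below $\epsilon/3$. We then choose $k$ large so that the low-frequency difference is below $\epsilon/3$. A diagonal argument over $N\in\N$ produces a single subsequence that works for every $N$.

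The main obstacle is the time-equicontinuity of $S_N u_k$. If the direct constitutive-law route runs into trouble because $K$ is singular at low frequencies, I would fall back on the Serfati identity \cref{FourierSerfati2}. There, the low-frequency part of $u_k$ is written as an explicit time integral with bounded integrand (by \cref{lowfreqwithN}) plus $(H_{N'}K)*(\theta_k(t)-\theta_{k,0})$. The latter is Lipschitz in time when tested against smooth functions, by \cref{P:WeakTimeContinuityOfthetaSeq}, and $S_N$ smooths it into a uniformly Lipschitz function on compacts.
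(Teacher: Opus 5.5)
Your argument is correct, and it is organized differently from the paper's.

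\textbf{The paper's route.} The paper fixes $N<0$ and expands $u_k$ via \cref{FourierSerfati2} into three pieces: $u_{k,0}$, the low-frequency time integral $l_{k,N}$, and $(H_NK)*(\theta_k(t)-\theta_{k,0})$. It treats these separately.
\begin{itemize}
\item The first is handled by $L^2_{loc}$ convergence of $u_{k,0}$.
\item The second is handled by compactness plus the $O(2^N)$ bound of \cref{lowfreqwithN}.
\item The third is handled for each fixed $N$ in \cref{P:convergence2}. That proof uses a four-term expansion, a second cutoff $M\to\infty$ controlled by \cref{L:ffInt}, and pointwise convergence of the band-limited pieces $(H_N^MK)*\theta_k(t)$. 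The last step requires $\theta_k(t)\wstar\theta(t)$ at every $t$, via \cref{L:WeakStarSectionalConvergence}.
\end{itemize}
It then sends $N\to-\infty$ and reassembles $u$ with \cref{nablasntozerolemma}.

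\textbf{Your route.} You use one cutoff and send it the other way, $N\to+\infty$. The symmetric identity $A(f,f)=\frac12B(f,f)$ makes the entire band above $2^N$ uniformly $O(2^{-N})$, for $\theta_k$ and $\theta$ alike. Everything below $2^N$ is handled by strong compactness of $S_Nu_k$. This function is smooth in $x$, and it is uniformly Lipschitz in $t$ because $S_N(u_k(t)-u_{k,0})=-\int_0^t[(S_N\nabla K)\stardot(\theta_ku_k)-\frac1k(S_N\Delta K)*\theta_k]\,ds$.

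\textbf{Comparison.} Your $N$ plays the role of the paper's $M$, and the paper's intermediate and low bands are merged into a single compactness step. The paper makes the low frequencies small and pays with the delicate fixed-$N$ high-frequency convergence and \cref{nablasntozerolemma}. You make the high frequencies small and pay only with time-equicontinuity of $S_Nu_k$. As a result you never need the initial-data term, \cref{L:WeakStarSectionalConvergence}, or the interchange of limits in \cref{nablasntozerolemma}. Both routes rest on the normalization \cref{Uequals0}.

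\textbf{Two corrections to side remarks.}

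First, your ``simpler route'' cannot be justified from the constitutive law alone. The obstruction is not the singularity of $K$ at low frequencies: with $\partial_t\theta_k$ in divergence form, $\sum_{j<N}(\Del_jK)*\partial_t\theta_k$ converges absolutely since $\smallnorm{\Del_j\nabla K}_{L^1}\le C2^j$. The real problem is that $\sum_{j<N}\Del_ju_k$ recovers $S_Nu_k$ only up to a function of $t$ (cf.\ \cref{SNulimit}). The constitutive law leaves that function free, as the transformations of \cref{P:SQGnuNonunique} show. The Serfati identity with $U_\infty\equiv0$ is what removes it, so that route is necessary rather than a fallback.

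Second, \cref{lowfreqwithN} controls $\partial_tS_Nu_k$ in $L^2(B_R)$, not in $L^\infty$. You can either add the $H^m$ bounds of \cref{IH12} (which hold for any fixed $N$, with $N$-dependent constants) together with Sobolev embedding, or use compactness in $C([0,T];L^2(B_R))$ directly. The latter is all the pairing with $\theta_k$ needs.
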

\begin{proof}
Fix $N\in\Z$ with $N<0$.  We apply \cref{FourierSerfati2} with $\nu=1/k$ and $U_{\infty}\equiv 0$ to expand $u_k$ and write
\begin{equation} \label{ln:transport-term-expansion}
\begin{split}
&\int_{\R^2}\theta_k \nabla\phi\cdot  u_k \, dx =  \int_{\R^2} \theta_k\nabla\phi\cdot u_{k,0} \, dx - \int_{\R^2}\theta_k\nabla\phi\cdot\int_0^t (S_{N}\nabla K  \stardot(u_k\theta_k))  \, ds \, dx\\
&\qquad\qquad + \frac{1}{k}\int_{\R^2}\theta_k\nabla\phi\cdot\int_0^t (S_{N}\Delta K \ast \theta_k)  \, ds \, dx \\
&\qquad\qquad + \int_{\R^2}\theta_k\nabla\phi\cdot((H_{N}K) \ast (\theta_k(t) - \theta_{k,0})) \, dx\\
&\qquad\qquad=  \int_{\R^2} \theta_k\nabla\phi\cdot u_{k,0} \, dx+ \int_{\R^2} \theta_kl_{k,N} \, dx + \int_{\R^2} \theta_kh_{k,N} \, dx,
\end{split}
\end{equation} 
where, similar to \eqref{ln:def-of-I-and-II}, we set
\begin{equation} \label{ln:def-of-I-k-II-k}
\begin{split}
&l_{k,N}(t,x) = -\nabla\phi(t,x)\cdot\int_0^t \left( S_{N}\nabla K \stardot(u_k\theta_k) - 1/k S_{N}\Delta K \ast \theta_k\right)(s,x) \, ds,\\
&h_{k,N}(t, x) =  \nabla\phi(t,x)\cdot((H_{N}K) \ast (\theta_k(t) - \theta_{k,0}))(x).
\end{split}
\end{equation}
To complete the proof, we appeal to \crefrange{prop:initial-data-convergence}{P:convergence2} below, which address convergence of the three integrals on the right hand side of \eqref{ln:transport-term-expansion}.  We first apply \cref{prop:initial-data-convergence,prop:convergence1} to write
\begin{equation*}
\begin{split}
& \lim_{k\rightarrow\infty} \int_0^T\int_{\R^2}\theta_k \nabla\phi \cdot u_k \, dx \, dt=  \lim_{N\rightarrow-\infty}\lim_{k\rightarrow\infty}\int_0^T\int_{\R^2} \theta_k \nabla \phi \cdot (u_{k,0} -u_{0}) \, dx \, dt\\
&\qquad  +   \lim_{N\rightarrow-\infty}\lim_{k\rightarrow\infty}\int_0^T\int_{\R^2} \theta_k (\nabla \phi \cdot u_{0} + l_{k,N} + h_{k,N}) \, dx \, dt \\
& \qquad =  \lim_{N\rightarrow-\infty}\lim_{k\rightarrow\infty}\int_0^T\int_{\R^2} \theta_k (\nabla \phi \cdot u_{0} + l_{k,N} + h_{k,N})\, dx \, dt\\
&\qquad =\lim_{N\rightarrow-\infty}\lim_{k\rightarrow\infty} \int_0^T\int_{\R^2}\theta_k (\nabla \phi \cdot u_{0}  + h_{k,N}) \, dx \, dt.
\end{split}
\end{equation*} 
By weak-$\star$ convergence of $(\theta_k)$ to $\theta$ in $L^{\infty}([0,T]\times \R^2)$, \cref{P:convergence2}, and \cref{nablasntozerolemma}, 
\begin{equation*}
\begin{split}
&\qquad \lim_{k\rightarrow\infty} \int_0^T\int_{\R^2} \theta_k \nabla\phi \cdot u_k \, dx \, dt = \int_0^T\int_{\R^2} \theta \nabla\phi \cdot u_0 \, dx\, dt \\
&+\lim_{N\rightarrow -\infty} \int_0^T\int_{\R^2} \theta \nabla\phi \cdot \left( (H_NK)\ast (\theta(t) - \theta_0)(x) \right) \, dx\, dt  = \int_0^T\int_{\R^2} \theta \nabla\phi \cdot u_0 \, dx\, dt  \\
& +\int_0^T\int_{\R^2} \theta  \left(\lim_{N\rightarrow -\infty} \nabla\phi(t,x) \cdot ((H_NK)\ast (\theta(t) - \theta_0)(x)) \right) \, dx \, dt\\
&  = \int_0^T\int_{\R^2} \theta \nabla\phi \cdot u \, dx \, dt, 
\end{split}
\end{equation*}
where we used that the convergence in \cref{nablasntozerolemma} is uniform to move the limit inside the integral. 
\end{proof}

We now state and prove \crefrange{prop:initial-data-convergence}{P:convergence2}.  Throughout the proofs, we let $\psi\in C^{\infty}_c(\R^2)$ denote a smooth bump function with $\|\psi \|_{L^{\infty}} = 1$ satisfying 
$$\psi \equiv 1 \text{ on } \cup\{ \supp \nabla \phi(t,x): t\in [0,T]\}\subseteq\R^2. $$  In particular, we will apply many of the preceding estimates from \cref{S:Apriori,S:Existence} with $\eta = \psi$.  We begin with \cref{prop:initial-data-convergence}.
\begin{prop}\label{prop:initial-data-convergence}
Let $(\theta_k, u_k)$ and $(\theta,u)$ be as in \cref{prop:existence-of-limit-of-theta-u}.  Then for any $\phi\in C^{\infty}_c([0,T]\times \R^2)$,
\begin{equation}
    \lim_{k\to \infty} \int_0^T\int_{\R^2} \theta_k(t,x) \nabla \phi(t,x)\cdot (u_{k,0}(x) - u_0(x))\, dx \, dt=  0. 
\end{equation}
\end{prop}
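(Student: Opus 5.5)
The plan is to reduce the statement to the local $L^2$ convergence of the smoothed initial velocity, \cref{prop:convergence-of-initial-data-u}, together with the uniform $L^\infty$ bound on $\theta_k$. The key point is that $u_{k,0}-u_0$ does not depend on $t$, so the whole time dependence sits in $\theta_k\nabla\phi$. That factor is uniformly bounded and supported in a fixed compact set. No weak-convergence information on $\theta_k$ is needed: the initial-data difference already converges strongly in $L^2_{loc}$.

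Concretely, let $\psi\in C_c^\infty(\R^2)$ be the bump function fixed above, equal to one on $\bigcup_t\supp\nabla\phi(t,\cdot)$. Write $\nabla\phi(t,x)=\nabla\phi(t,x)\psi(x)$. For each $t\in[0,T]$, Cauchy--Schwarz in $x$ gives
\begin{align*}
\abs{\int_{\R^2}\theta_k\nabla\phi\cdot(u_{k,0}-u_0)\,dx}
\le \norm{\theta_k(t)}_{L^\infty}\norm{\nabla\phi(t)}_{L^2}\norm{\psi(S_ku_0-u_0)}_{L^2}.
\end{align*}
By \cref{ln:uniform-bound-on-theta-k}, $\norm{\theta_k(t)}_{L^\infty}\le\norm{\theta_0}_{L^\infty}$ uniformly in $k$ and $t$. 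Moreover, $\norm{\nabla\phi(t)}_{L^2}\le C_\phi$ uniformly in $t$, since $\phi\in C_c^\infty([0,T]\times\R^2)$. Integrating over $[0,T]$ bounds the full double integral by $T\,C_\phi\norm{\theta_0}_{L^\infty}\norm{\psi S_ku_0-\psi u_0}_{L^2}$.

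Finally, apply \cref{prop:convergence-of-initial-data-u} with $\eta=\psi$ to get $\norm{\psi S_ku_0-\psi u_0}_{L^2}\to 0$ as $k\to\infty$, which proves the claim. There is no genuine obstacle here. The only points to check are that $\psi$ can be chosen independent of $t$, which holds because $\phi$ has compact support in space-time, and that the bound on $\theta_k$ is uniform in $k$, which follows from \cref{ln:uniform-bound-on-initial-data-of-sequence} and the maximum principle \cref{max}. The argument holds for the full sequence, so no subsequence is needed.
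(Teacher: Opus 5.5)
Your proposal is correct and follows the paper's proof essentially step for step. You insert the spatial cutoff $\psi$, bound the integral by $T$ times the uniform bound on $\norm{\theta_k}_{L^\iny}$, $\sup_t\norm{\nabla\phi(t)}_{L^2}$, and $\norm{\psi(S_k u_0 - u_0)}_{L^2}$, and conclude with \cref{prop:convergence-of-initial-data-u} applied with $\eta=\psi$. (The uniform bound on $\theta_k$ is really $C\norm{\theta_0}_{L^\iny}$ by \cref{ln:uniform-bound-on-initial-data-of-sequence}, since $\chi$ need not be nonnegative, but only uniformity in $k$ matters.)
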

\begin{proof}
      By \cref{ln:uniform-bound-on-theta-k} we have
    \begin{equation}
    \begin{split}
        &\abs{\int_0^T\int_{\R^2} \theta_k \nabla \phi (u_{k,0}- u_0) \, dx \, dt} \leq \|\theta_k \|_{L^1([0,T];L^\infty)} \sup_{t\in [0,T]}\int_{\R^2} |\psi(x) \nabla \phi(t,x)( u_0-u_{k,0})(x)| \, dx \\
        &\qquad \qquad\qquad\leq \|\theta_k \|_{L^1([0,T];L^\infty)} \sup_{t\in [0,T]}(\|\nabla \phi(t)\|_{L^2} \|\psi( u_0-u_{k,0})\|_{L^2} ) \\
        &\qquad \qquad\qquad\leq CT\|\theta_0 \|_{L^\infty} \|\psi( u_0-u_{k,0})\|_{L^2}. 
    \end{split}
    \end{equation}
    By \cref{prop:convergence-of-initial-data-u}, the claim follows.
\end{proof}

\begin{prop}\label{prop:convergence1}
Let $(\theta_k, u_k)$ be as in \cref{prop:existence-of-limit-of-theta-u}.  Then
\begin{equation}\label{convergence}
\begin{split}
&\lim_{N\rightarrow -\infty}\lim_{k\rightarrow\infty} \int_0^T\int_{\R^2} \theta_k(t,x)l_{k,N}(t,x) \, dx \, dt=0. 
\end{split}
\end{equation}    
\end{prop}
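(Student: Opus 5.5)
The plan is to bound the inner integral uniformly in $k$ by a quantity of size $C2^N$, which vanishes as $N\to-\infty$. The key input is \cref{lowfreqwithN} applied with $\eta=\psi$ and $\nu=1/k$; the small factor $2^N$ comes from $\|\nabla S_N K\|_{L^1}\le C2^N$ and $\|\Delta S_N K\|_{L^1}\le C2^{2N}$ in \cref{SNKbound}. No convergence in $k$ is needed at all. The inner $\lim_{k\to\infty}$ can be replaced by a $\limsup$, and that $\limsup$ will already be $O(2^N)$.

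\textbf{Step 1: localize.} For each $t$ we have $\nabla\phi(t,\cdot)=\psi\nabla\phi(t,\cdot)$. Hence
\begin{equation*}
|l_{k,N}(t,x)|\le \|\nabla\phi\|_{L^\infty}\Bigl|\psi(x)\int_0^t\bigl(S_N\nabla K\stardot(u_k\theta_k)-\tfrac1k S_N\Delta K*\theta_k\bigr)(s,x)\,ds\Bigr|.
\end{equation*}
The right-hand side is $\|\nabla\phi\|_{L^\infty}$ times the absolute value of the quantity $l_N$ of \cref{lowfreqwithN}, taken with $\eta=\psi$ and solution $(\theta_k,u_k)$.

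\textbf{Step 2: apply Cauchy--Schwarz on the support of $\psi$.} Using \cref{ln:uniform-bound-on-theta-k},
\begin{equation*}
\Bigl|\int_0^T\!\!\int_{\R^2}\theta_k l_{k,N}\,dx\,dt\Bigr|\le \|\theta_0\|_{L^\infty}\|\psi\|_{L^2}\|\nabla\phi\|_{L^\infty}\int_0^T\|l_N(t)\|_{L^2_x}\,dt .
\end{equation*}
\cref{lowfreqwithN} then gives, for $N<0$,
\begin{equation*}
\|l_N(t)\|_{L^2_x}\le C2^N\|\theta_0\|_{L^\infty}\Bigl(t/k+\int_0^t\|u_k(s)\|_{L^2_{ul}}\,ds\Bigr).
\end{equation*}
The one point to check is that \cref{lowfreqwithN} uses the maximum principle with initial datum $\theta_{k,0}$. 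This is harmless, since $\|\theta_{k,0}\|_{L^\infty}\le C\|\theta_0\|_{L^\infty}$ by \cref{ln:uniform-bound-on-initial-data-of-sequence}.

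\textbf{Step 3: use the uniform velocity bound.} The main thing to verify is that $\|u_k\|_{L^\infty([0,T];L^2_{ul})}$ is bounded independently of $k$. This follows from \cref{L2ulestDSQG} with $U_\infty\equiv0$ (the choice \cref{Uequals0}) together with \cref{ln:uniform-bound-on-initial-data-of-sequence}, exactly as in \cref{velunifbound}. It gives
\begin{equation*}
\|u_k(t)\|_{L^2_{ul}}\le C\bigl(\|u_0\|_{L^2_{ul}}+\|\theta_0\|_{L^\infty}+1\bigr)e^{CT\|\theta_0\|_{L^\infty}}.
\end{equation*}
Combining the steps, $\sup_k\bigl|\int_0^T\!\int\theta_k l_{k,N}\bigr|\le C(T,\phi,\theta_0,u_0)\,2^N$. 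Hence the $\limsup$ over $k$ is at most $C2^N$, which tends to $0$ as $N\to-\infty$. This proves \cref{convergence}, where the inner limit is read as a $\limsup$, or taken along the subsequence already fixed.
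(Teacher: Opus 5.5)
Your proof is correct, and it takes a more direct route than the paper's.

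The paper proceeds by compactness:
\begin{itemize}
\item it upgrades \cref{lowfreqwithN} to the $H^1$ bound of \cref{IH12};
\item it uses Rellich's theorem, the Lipschitz-in-time estimate \cref{singlesub}, and a diagonal argument to get $l_{k,N}(t)\to F_N(t)$ in $L^2$ for all $t$ and all $N<0$;
\item it splits the integral into three pieces;
\item only at the end does it use $\|F_N(t)\|_{L^2}\le C2^N$, a bound inherited from exactly the uniform-in-$k$ $L^2$ estimate you apply directly.
\end{itemize}
You bypass all of this. You get $\sup_k\bigl|\int_0^T\!\int\theta_k l_{k,N}\bigr|\le C2^N$ from \cref{lowfreqwithN}, the maximum principle, and the uniform $L^2_{ul}$ bound.

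What the paper's detour buys is that the inner limit genuinely exists (along a further subsequence) and equals $\int_0^T\!\int\theta F_N$. So \cref{convergence} holds literally as an iterated limit. Your bound gives $\lim_{N\to-\infty}\limsup_{k\to\infty}\bigl|\int_0^T\!\int\theta_k l_{k,N}\bigr|=0$. That is all \cref{prop:nonlinear-term-convergence} actually uses.

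However, the bound alone does not justify your alternative reading ``taken along the subsequence already fixed,'' because nothing you wrote shows that this limit exists. If you want it, it follows a posteriori. In the time-integrated \cref{ln:transport-term-expansion}, every other term converges in $k$ for each fixed $N$, by \cref{prop:initial-data-convergence}, weak-$*$ convergence of $\theta_k$, and \cref{P:convergence2}. Your $O(2^N)$ bound then forces $\int_0^T\!\int\theta_k\nabla\phi\cdot u_k$ to be Cauchy in $k$. Hence $\lim_k\int_0^T\!\int\theta_k l_{k,N}$ exists for every $N$.
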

\begin{proof}
    
An application of \cref{IH12} to $l_{k,N}(t,x)$ with $\eta=\psi$ gives, for each $t\in [0,T]$,
\begin{equation*}
\| l_{k,N}(t,x) \|_{H^1_x} \leq C2^{N}\| \theta_{k,0}\|_{L^{\infty}}\left(\frac{t}{k} + \int_0^t \| u_k \|_{L^2_{ul}}\, ds\right).
\end{equation*}
Moreover, by \cref{L2ulestDSQG},
\begin{equation*}
\| u_k(t) \|_{L^2_{ul}} \leq C\left(\| u_{k,0} \|_{L^2_{ul}} + \|\theta_{k,0} \|_{L^{\infty}} + \frac{1}{k}\right)\exp \left( Ct\|\theta_{k,0}\|_{L^{\infty}}\right).
\end{equation*}
Thus, by \cref{ln:uniform-bound-on-initial-data-of-sequence}, for each fixed $N<0$ we get a uniform bound in $k$ on $\| l_{k,N}(t,x) \|_{H^1_x}$.  Since for each $k$, $l_{k,N}(t,x)$ has support contained in the support of $\nabla\phi(t,x)$, for each $N$ and each $t\in [0,T]$, we can apply Rellich's theorem to the sequence $( l_{k,N})_{k\in\N}$ and conclude that there exists a subsequence of $( l_{k,N})_{k\in\N}$, relabeled $( l_{k,N})_{k\in\N}$, converging strongly in $L^2(\R^2)$.  

We must show that there exists a single subsequence of $(l_{k,N})_{k\in\N}$ which converges strongly in $L^2(\R^2)$ for all $t\in [0,T]$ and for all $N<0$.  To this end, note that for each $k$ and $N<0$ and for fixed $\tau,t\in[0,T]$, 
\begin{equation}\label{Rellichapp}
\begin{split}
&\qquad\qquad \|l_{k,N}(t) - l_{k,N}(\tau)\|_{L^2}\\
&\leq  \|\nabla\phi(t,x)-\nabla\phi(\tau,x)\|_{L_x^\infty} \left\|\psi(x)\int_0^t \left( S_{N}\nabla K \stardot (u_k\theta_k) - (1/k) S_{N}\Delta K \ast \theta_k\right)(s,x) \, ds \right\|_{L^2_x} \\
&+  \|\nabla\phi(\tau,x)\|_{L_x^\infty}\left\|\psi(x)\int_{\tau}^t \left( S_{N}\nabla K \stardot (u_k\theta_k) - (1/k) S_{N}\Delta K \ast \theta_k\right)(s,x) \, ds \right\|_{L^2_x}.
\end{split}
\end{equation}
By \cref{lowfreqwithN}, \cref{L2ulestDSQG}, and smoothness of $\nabla\phi$, keeping in mind that $N<0$, we can bound the first term on the right hand side of \cref{Rellichapp} above by
$$ C(\|u_0\|_{L^2_{ul}}, \|\theta_0\|_{L^{\infty}}, T)|t-\tau|.$$
Moreover, following the proof of \cref{lowfreqwithN}, one can show that the second term on the right hand side of \cref{Rellichapp} can be bounded above by
$$ C(\|u_0\|_{L^2_{ul}}, \|\theta_0\|_{L^{\infty}}, T)|t-\tau|.$$
We conclude that 
\begin{equation}\label{singlesub} \|l_{k,N}(t) - l_{k,N}(\tau)\|_{L^2} \leq C(\|u_0\|_{L^2_{ul}}, \|\theta_0\|_{L^{\infty}}, T)|t-\tau|.
\end{equation}
We can use \cref{singlesub} and diagonalization to show that there exists a subsequence of $(l_{k,N})_{k\in\N}$ (relabeled $(l_{k,N})_{k\in\N}$), which converges strongly in $L^2(\R^2)$ at each $t\in [0,T]$ and for each $N<0$ to a function $F_N \in L^1([0,T]; L^2(\R^2))$ with spatial compact support.  Indeed, by \cref{lowfreqwithN}, for each $t\in [0,T]$,
\begin{equation}\label{FNbound} \| F_N(t)  \|_{L^2} \leq C(\| \theta_0\|_{L^{\infty}}, \| u_0 \|_{L^2_{ul}}, T)2^N.
\end{equation}

Now note that
\begin{equation}\label{Ikconvergestep1}
\begin{split}
& \lim_{N\rightarrow -\infty}\lim_{k\rightarrow\infty}\int_0^T\int_{\R^2} \theta_k(t,x)l_{k,N}(t,x) \, dx \, dt \\
&\qquad = \lim_{N\rightarrow -\infty}\lim_{k\rightarrow\infty} \int_0^T\int_{\R^2} \theta_k(t,x) (l_{k,N}(t,x)- F_N(t,x)) \, dx \, dt \\
&\qquad + \lim_{N\rightarrow -\infty}\lim_{k\rightarrow\infty} \int_0^T\int_{\R^2} (\theta_k(t,x) - \theta(t,x))F_N(t,x) \, dx \, dt \\
&\qquad + \lim_{N\rightarrow -\infty}\lim_{k\rightarrow\infty} \int_0^T\int_{\R^2} \theta(t,x)F_N(t,x) \, dx \, dt.  
\end{split}
\end{equation}
We show that the limit of each term on the right hand side of \cref{Ikconvergestep1} is zero.  Beginning with the second term, since for each $t\in [0,T]$, $F_N(t)$ has spatial compact support, $F_N$ belongs to $L^1([0,T]\times \R^2)$.  Thus by weak-$\star$ convergence of $(\theta_k)$ to $\theta$ in $L^{\infty}([0,T]\times \R^2)$, for each fixed $N$,
$$ \lim_{k\rightarrow\infty} \int_0^T\int_{\R^2} (\theta_k - \theta)F_N \, dx \, dt = 0.$$
For the first term on the right hand side of \cref{Ikconvergestep1}, note that
\begin{equation*}
\begin{split}
&\lim_{k\rightarrow\infty} \int_0^T\int_{\R^2} \theta_k (l_{k,N}- F_N) \, dx \, dt \leq \lim_{k\rightarrow\infty} \int_0^T \|\psi \theta_k\|_{L^2} \|l_{k,N}(t)- F_N(t)\|_{L^2}  \, dt\\
&\leq C\| \theta_0 \|_{L^{\infty}} \int_0^T \lim_{k\rightarrow\infty} \|l_{k,N}(t)- F_N(t)\|_{L^2}  \, dt = 0,
\end{split}
\end{equation*}
where we applied the Dominated Convergence Theorem to move the limit inside the integral.  Finally, for the third term on the right hand side of \cref{Ikconvergestep1}, we use \cref{FNbound} to write
\begin{equation*}
\begin{split}
&\abs{\int_0^T\int_{\R^2} \theta(t,x)F_N(t,x) \, dx \, dt} \leq  \| \psi\theta \|_{L^1([0,T];L^2)} \|F_N \|_{L^{\infty}([0,T]; L^2)} \\
&\qquad \leq   C(\|u_0\|_{L^2_{ul}}, \|\theta_0\|_{L^{\infty}}, T)2^N \rightarrow 0 
\end{split}
\end{equation*}
as $N\rightarrow -\infty$.  Combining the limits for the three terms, the claim follows.
\end{proof}

    \begin{prop}\label{P:convergence2}
    For each fixed $N \in \Z$ and any $\phi \in C_c^\iny([0, T) \times \R^2)$,
    \begin{equation}\label{e:KeyNonlinearConv}
    \begin{split}
    \lim_{k\rightarrow\infty}
    	\int_0^T &\int_{\R^2} \theta_k(t, x)
        \nabla\phi(t,x) \cdot ((H_N K) *
        	(\theta_k(t) - \theta_{k,0}))(x)
            \, dx \, dt \\
        &= \int_0^T\int_{\R^2} \theta(t, x)
            \nabla\phi(t,x) \cdot ((H_N K) * (\theta(t) - \theta_0))(x)
            \, dx \, dt.
    \end{split}
    \end{equation}
    \end{prop}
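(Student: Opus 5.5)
The plan is to split the integrand into a genuinely quadratic part and a part linear in $\theta_k$. Write
\begin{align*}
\theta_k \nabla\phi\cdot ((H_NK)*(\theta_k(t)-\theta_{k,0}))
 = \theta_k \nabla\phi\cdot ((H_NK)*\theta_k(t)) - \theta_k \nabla\phi\cdot ((H_NK)*\theta_{k,0}).
\end{align*}
In the notation of \cref{L:ffInt} (with $\phi$ replaced by $\phi(t,\cdot)$), the first term integrated in $x$ is $A(\theta_k(t),\theta_k(t))$. The second term is $\int \theta_k(t,x) g_k(t,x)\,dx$, where $g_k := \nabla\phi\cdot((H_NK)*\theta_{k,0})$.

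\textbf{The linear term.} Since $\theta_{k,0}=S_k\theta_0\to\theta_0$ weak-$\star$ (indeed in $\mathcal{S}'$) and is uniformly bounded in $L^\infty$, I will show that $\psi\,(H_NK)*\theta_{k,0}\to \psi\,(H_NK)*\theta_0$ strongly in $L^2$. To do so, split $H_NK = \sum_{j=N}^{J}\Del_jK + H_{J+1}K$.
\begin{itemize}
\item For the finitely many low pieces, $\Del_jK\in L^1$ and $\Del_j K*\theta_{k,0} = S_k(\Del_jK*\theta_0)\to \Del_jK*\theta_0$ locally uniformly.
\item For the tail, \cref{LPRieszcommutator} together with the $L^2$ argument of the $h^1,h^2$ estimate in \cref{theoremapriori} bound $\psi H_{J+1}K*f$ in $L^2$ by $C\|f\|_{L^\infty}$. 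This alone is not small in $J$, so it is simpler to use the Calder\'on--Zygmund approach directly. Write $\psi (H_NK)*\theta_{k,0} = [\psi,(H_NK)*]\theta_{k,0} + (H_NK)*(\psi\theta_{k,0})$. The second piece converges in $L^2$ because $\psi S_k\theta_0\to\psi\theta_0$ in $L^2$ (commutator argument as in \cref{prop:convergence-of-initial-data-u}) and $H_NK*$ is bounded on $L^2$. The commutator piece is controlled by $\sum_{j\ge N}$ of kernels $\Del_jK(x-y)(\psi(x)-\psi(y))$, whose tail in $j$ is summable by \cref{L:DelKxBound}. Each fixed-$j$ piece converges pointwise boundedly.
\end{itemize}
Strong $L^1_{t,x}$ convergence of $g_k$ on compact support, combined with weak-$\star$ convergence of $\theta_k$, then gives convergence of the linear term.

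\textbf{The quadratic term.} This is the main obstacle, since only weak-$\star$ convergence of $\theta_k$ is available. By \cref{e:ffIntId}$_2$, $A(\theta_k,\theta_k)=\frac12 B(\theta_k,\theta_k)$, where
\[
B(f,f)=\iint (H_NK)(x-y)\cdot(\nabla\phi(y)-\nabla\phi(x)) f(y)f(x)\,dy\,dx .
\]
The kernel $L(x,y):=(H_NK)(x-y)\cdot(\nabla\phi(y)-\nabla\phi(x))$ is no longer singular. Near the diagonal it is bounded by $C|x-y||H_NK(x-y)|\lesssim C$, since $|K(z)|\sim|z|^{-2}$ and $H_NK$ differs from $K$ by a smooth bounded function. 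It is supported where $x$ or $y$ lies in $\supp\nabla\phi$, and it decays like $|x-y|^{-3}$ when the other variable is far away, as in the proof of \cref{L:ffInt}. Hence $L\in L^1(\R^2\times\R^2)$, after checking the summation $\sum_{j\ge N}$ via \cref{L:DelKxBound}. Weak-$\star$ convergence of $\theta_k$ alone does not give convergence of the bilinear form, so I will use time regularity. By \cref{P:WeakTimeContinuityOfthetaSeq} and \cref{L:WeakStarSectionalConvergence}, $\theta_k(t)\wstar\theta(t)$ in $L^\infty(\R^2)$ for every $t$. Approximate $L$ in $L^1(\R^4)$ by finite sums $\sum_i a_i(x)b_i(y)$ with $a_i,b_i\in C_c(\R^2)$; the error is $\le \|\theta_0\|_{L^\infty}^2\|L-\sum a_ib_i\|_{L^1}$ uniformly in $k$. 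For each separated term, $\int a_i\theta_k(t)\int b_i\theta_k(t)\to\int a_i\theta(t)\int b_i\theta(t)$ for every $t$, because a product of convergent scalars converges. Therefore $B(\theta_k(t),\theta_k(t))\to B(\theta(t),\theta(t))$ for every $t$. The bound \cref{e:ffBound} and dominated convergence in $t$ then give convergence of the time integral.

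\textbf{Conclusion.} Applying \cref{e:ffIntId}$_2$ to $\theta(t)$ converts $\frac12 B(\theta(t),\theta(t))$ back to $A(\theta(t),\theta(t))$. Combined with the linear term, this yields \cref{e:KeyNonlinearConv}. The delicate points are the $L^1$ integrability of $L$, which must be uniform across the dyadic sum, and the time dependence of $\phi$, which only enters through smooth parameters.
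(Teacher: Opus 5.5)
Your proof is correct, but it is organized differently from the paper's.

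\textbf{The paper's route.} It expands $A(\theta_k,\theta_k-\theta_{k,0})-A(\theta,\theta-\theta_0)$ around the limit into four terms.
\begin{itemize}
\item The data term is handled by the constitutive law: $H_N(u_0-u_{k,0})=H_ku_0\to 0$ in $L^2_{loc}$.
\item The pieces $A(\theta_k-\theta,\theta_k-\theta)$ and $A(\theta,\theta_k-\theta)$ are treated by truncating $H_N=H_N^M+H_{M+1}$. Pointwise convergence of $(H_N^MK)*\theta_k(t)$ comes from the every-$t$ weak-$*$ convergence plus dominated convergence. The tail is controlled uniformly in $k$ by the $C2^{-M}$ bound \cref{e:ffBound}, with separate $\varepsilon/2$ arguments.
\end{itemize}

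\textbf{Your route.} You split off the self-interaction $A(\theta_k(t),\theta_k(t))$ and symmetrize it once. The resulting kernel $H_NK(x-y)\cdot(\nabla\phi(y)-\nabla\phi(x))$ lies in $L^1(\R^2\times\R^2)$, which is the dyadic estimate in the proof of \cref{L:ffInt} summed over $j\ge N$. You then use that $f_k\wstar f$ boundedly in $L^\infty(\R^2)$ implies $f_k\otimes f_k\wstar f\otimes f$ in $L^\infty(\R^4)$, by density of tensor products in $L^1(\R^4)$.

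\textbf{What each buys.} Your approach is more structural. It explains the convergence as weak-$*$ continuity of a bilinear form with an integrable kernel, and it removes the $M$-truncation and the separate treatment of the cross terms. You still rely, correctly, on $\theta_k(t)\wstar\theta(t)$ for every $t$, which is exactly what the paper needs too. The cost is on the linear term: your commutator/Calder\'on--Zygmund decomposition works but is longer than the paper's identity $(H_NK)*(\theta_{k,0}-\theta_0)=-H_ku_0$ for $k>N$ combined with \cref{prop:convergence-of-initial-data-u}. Your first bullet there is superseded by the second and can be dropped.

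\textbf{One small slip.} Near the diagonal, $|x-y|\,|H_NK(x-y)|$ is of size $|x-y|^{-1}$, not bounded. Since $|z|^{-1}$ is locally integrable in $\R^2$, your conclusion $L\in L^1$ is unaffected.
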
 
    \begin{proof}
    We can write \cref{e:KeyNonlinearConv} in the form
    \begin{align*}
    	\lim_{k \to \iny}
			\brac{A(\theta_k, \theta_k - \theta_{k, 0})
				- A(\theta, \theta - \theta_0)}
			= 0,
    \end{align*}
    where
    \begin{align*}
    	A(f, g)
			&:= \int_0^T \int_{\R^2} f(t, x)
    	    \grad \phi(t,x) \cdot ((H_N K) * g(t))(x)
        	    \, dx \, dt,
    \end{align*}
    which we note is the integrated-in-time form of the operator in \cref{e:ABOperators}. We have,
    \begin{align}\label{e:ABreakdown}
    	\begin{split}
    	A(&\theta_k, \theta_k - \theta_{k, 0})
				- A(\theta, \theta - \theta_0) \\
			&= A(\theta_k, \theta_k - \theta_{k, 0})
				- A(\theta_k, \theta - \theta_0)
				+ A(\theta_k - \theta, \theta - \theta_0) \\
			&= A(\theta_k, \theta_k - \theta)
				+ A(\theta_k, \theta_0 - \theta_{k, 0})
				+ A(\theta_k - \theta, \theta - \theta_0) \\
			&=  A(\theta_k, \theta_0 - \theta_{k, 0})
				+ A(\theta_k - \theta, \theta - \theta_0) \\
			&\qquad\qquad
				+ A(\theta_k - \theta, \theta_k - \theta)
                + A(\theta, \theta_k - \theta).
		\end{split}
    \end{align}
    We bound each of the four terms on the right hand side of 
    \cref{e:ABreakdown} in turn.
    
	\smallskip\noindent$\boldsymbol{A(\theta_k,
		\theta_0 - \theta_{k, 0})}$:
	We have,
	\begin{align*}
		\abs{A(\theta_k, \theta_0 - \theta_{k, 0})}
			&\le \norm{\theta_k}_{L^\iny([0, T] \times \R^2)}
				\norm{\grad \phi}_{L^1(0, T; L^2)}
				\norm{H_N(u_0 - u_{k, 0})}_{L^2(\Sigma)},
	\end{align*}
	where $\Sigma$ is a compact set in $\R^2$ containing the support of $\grad \phi(t)$ for all $t \in [0, T]$. But, for all $k > N$, $H_N (u_0 - u_{k,0}) = H_N(H_k u_0) = H_k u_0 = u_0 - S_k u_0 \to 0$ in $L^2(\Sigma)$ by \cref{prop:convergence-of-initial-data-u}.

    \smallskip\noindent$\boldsymbol{A(\theta_k - \theta,
    	\theta - \theta_0)}$:
    Because $(\theta, u)$ satisfies the homogeneous constitutive law,  $(H_N K) * (\theta(t) - \theta_0)) = H_N (u(t) - u_0) \in L^2_{ul}(\R^2)$, and hence
	\begin{align*}
		\rho(t, x)
			&:= \grad \phi(t,x) \cdot
            	((H_N K) * (\theta(t) - \theta_0))(x)
			\in L^1([0, T] \times \R^2).
	\end{align*}
	\cref{prop:existence-of-limit-of-theta-u} gives
	\begin{align*}
		A(&\theta_k - \theta, \theta - \theta_0)
			= \int_0^T\int_{\R^2} (\theta_k - \theta) (t, x)
				\rho(t, x)
            \, dx \, dt
            \to 0.
	\end{align*}

    \smallskip\noindent$\boldsymbol{A(\theta_k - \theta,
    	\theta_k - \theta)}$:
    Define,
    \begin{align*}
        v_k
            &:= (H_N K) * \theta_k(t)
            = H_N u_k(t), \\
        v &
            := (H_N K) * \theta(t)
            = H_N u(t).
    \end{align*}
    The second expressions for $v_k$ and $v$ hold because $(\theta_k, u_k)$ and $(\theta, u)$ satisfy the homogeneous constitutive law.
	Then, $A(\theta_k - \theta, \theta_k - \theta) \to 0$ can
	be written as
    \begin{align}\label{e:KeyNonFinal}
         \lim_{k \to \iny} &\int_0^T \int_{\R^2}
         	(\theta_k - \theta)(t,x)
            \nabla\phi(t,x) \cdot (v_k - v)(t, x)
                \, dx \, dt
            = 0.
    \end{align}
    
    Define, for $M \ge N$,
    \begin{align}\label{e:HNM}
        H_N^M := \sum_{j = N}^M \Del_j,
    \end{align}
    and let $H_N^M = 0$ for $M < N$. Then $H_N^M K \in L^1(\R^2)$ because each $\smallnorm{\Del_j K}_{L^1} = \smallnorm{\Del_0 K}_{L^1}$.

    Fixing $N$, define
    \begin{align*}
        v_k^M &:= (H_N^M K) * \theta_k(t), \\
        v^M &:= (H_N^M K) * \theta(t).
    \end{align*}
    Using that $H_N^M K \in L^1(\R^2)$ and $\norm{\theta_k(t)} \le \norm{\theta_0}_{L^\iny}$, we know that $v^M \in L^\iny([0, T] \times \R^2)$ and $(v_k^M)_{k = 1}^\iny$ is bounded in $L^\iny([0, T] \times \R^2)$.

    By \cref{prop:existence-of-limit-of-theta-u}, $\theta_k \wstar \theta$ in $L^\iny([0, T] \times \R^2)$ (here, and in what follows, we take subsequences without further comment). By \cref{L:WeakStarSectionalConvergence} with \cref{P:WeakTimeContinuityOfthetaSeq}, then, we know that $\theta_k(t) \wstar \theta(t)$ in $L^\iny(\R^2)$ for all $t \in [0, T]$. Hence, using that $(H_N^M K)(x - \cdot) \in L^1(\R^2)$, for all $(t, x) \in [0, T] \times \R^2$,
    \begin{align*}        
        v_k^M(t, x)
            &= \int_{\R^2} (H_N^M K)(x - y) \theta_k(t, y)
                \, dy \\
            &\to \int_{\R^2} (H_N^M K)(x - y) \theta(t, y)
                \, dy
            = v^M(t, x).
    \end{align*}

    Fixing both $N$ and $M \ge N$, we have
    \begin{align*}
        &\abs{\int_0^T \int_{\R^2} (\theta_k - \theta)(t, x)
            \grad \phi(t,x) \cdot (v_k^M - v^M)(t, x)
            \, dx \, dt } \\
        &\qquad
        \le \norm{\theta_k - \theta}_{L^\iny([0, T] \times \R^2)}
        \norm{\grad \phi}_{L^\iny([0, T] \times \R^2)}
        \smallnorm{v_k^M - v^M}_{L^1(\supp(\grad \phi))} \\
        &\qquad
        \le C \norm{v_k^M - v^M}_{L^1(\supp(\grad \phi))}.
    \end{align*} 
    As we noted, $(v_k^M)_{k = 1}^\iny$ is bounded in $L^\iny([0, T] \times \R^2)$ and $v^M \in L^\iny([0, T] \times \R^2)$, so $v_k^M - v^M$ is bounded above by an $L^1$ function on $\supp(\grad \phi)$: applying the dominated convergence theorem, the pointwise convergence of $v_k^M - v^M$ to 0
    yields
    \begin{align}\label{e:vkMLimitInk}
        \lim_{k \to \iny} \int_0^T &\int_{\R^2}
        	(\theta_k - \theta)(t,x)
            \nabla\phi(t,x) \cdot (v_k^M - v^M)(t, x)
                \, dx \, dt
            = 0.
    \end{align}

	It remains to show that \cref{e:vkMLimitInk} holds in the limit
	as $M \to \iny$---uniformly in $k$---giving \cref{e:KeyNonFinal}.
	Applying \cref{L:ffInt}
	gives
	\begin{align}\label{e:vkMLimitInM}
		\begin{split}
		&\abs{\int_0^T \int_{\R^2}
        	(\theta_k - \theta)(t, x)
            \grad \phi(t,x) \cdot (v_k^M - v^M
            		- (v_k - v))(t, x)
                \, dx \, dt} \\
			&\qquad
			= \abs{\int_0^T \int_{\R^2}
	        	(\theta_k - \theta)(t, x)
    	        \grad  \phi(t,x) \cdot (H_{M + 1} K) *
	        		(\theta_k - \theta))(t, x)
            	    \, dx \, dt} \\
			&\qquad
			\le C \norm{\theta_k - \theta}_
					{L^\iny([0, T] \times \R^2)}^2
				2^{-M}
			\le C 2^{-M},
		\end{split}
	\end{align}
    independent of $k$.
	What we have shown is that
	\begin{align*}
		&\abs{A(\theta_k - \theta, \theta_k - \theta)}
            \le L_{k, M} + C 2^{-M},
	\end{align*}
	where $L_{k, M}$ is the magnitude of the integral in \cref{e:vkMLimitInk}.
    Let $\eps > 0$
	and choose $M$ large enough that $C 2^{-M} < \eps/2$.
	Then choose $n$ large enough that $L_{k, M}  \le \eps/2$ for all
	$k \ge n$ for the given choice of $M$. Then
	$\abs{A(\theta_k - \theta, \theta_k - \theta)} < \eps$ for all $k \ge n$,
	showing that $\abs{A(\theta_k - \theta, \theta_k - \theta)} \to 0$
	as $k \to \iny$.

    \medskip\noindent$\boldsymbol{A(\theta, \theta_k - \theta)}$:
    Recalling that $N$ is fixed, for any $M \ge N$ we can write $H_N = H_N^M + H_M$. Then, applying \cref{L:ffInt}, $A(\theta, \theta_k - \theta) = I_{k, M} + R_M$, where
    \begin{align*}
        I_{k, M}
			:= \int_0^T &\int_{\R^2} ((H_N^M K) * \theta(t, x))  \cdot \grad \phi(t,x)
                     (\theta_k - \theta)(t, x)  \, dx \, dt \\
            &+ \int_0^T \int_{\R^2} ((H_M K) * \theta(t, x))  \cdot \grad \phi(t,x)
                     (\theta_k - \theta)(t, x)  \, dx \, dt
    \end{align*}
    and $\abs{R_M} \le C 2^{-M}$ independently of $k$.
    But, $(H_M K) * \theta = H_M u \in L^{\infty}([0,T];L^2_{ul}(\R^2))$ by \cref{SnBound}, so $((H_M K) * \theta(t, x))  \cdot \grad \phi(t,x) \in L^1([0,T]\times\R^2)$, as also is $((H_N^M K) * \theta(t, x))  \cdot \grad \phi(t,x)$, since $H_M^N K$ is Schwartz-class.
    
    Hence, for any given $M$, $I_{k, M} \to 0$ as $k \to \iny$, though at an unknown rate.
    
    Let $\eps > 0$ and choose $M$ large enough that $\abs{R_M} < \eps/2$.
	Then choose $n$ large enough that $I_{k, M} \le \eps/2$ for all
	$k \ge n$ for the given choice of $M$. Then
	$\abs{A(\theta, \theta_k - \theta)} < \eps$ for all $k \ge n$,
	showing that $\abs{A(\theta, \theta_k - \theta)} \to 0$
	as $k \to \iny$. 
\end{proof}

With \cref{prop:nonlinear-term-convergence} in hand, we show that $(u, \theta)$ are, in fact, our desired weak solution.

\begin{prop}\label{prop:fourier-serfati-for-theta-u}
    Let $(\theta_k,u_k)$ and $(\theta,u)$ be as in \cref{prop:existence-of-limit-of-theta-u}.  Then $(\theta,u)$  satisfies \cref{weaksolutiondef2}.
\end{prop}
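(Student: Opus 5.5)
We must check each requirement of \cref{weaksolutiondef2} for the limit pair $(\theta,u)$ from \cref{prop:existence-of-limit-of-theta-u}: the regularity \cref{regularitycondition}, the homogeneous constitutive law, $\dv u = 0$, and the weak identity \cref{e:WeakId}.

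\textbf{Regularity, constitutive law, divergence.} The regularity \cref{regularitycondition} is part of \cref{prop:existence-of-limit-of-theta-u}. The homogeneous constitutive law, holding everywhere on $[0,T]\times\R^2$ after modification on a null set, is \cref{constlaweverywhere}. For $\dv u = 0$, we use that $\dv u_k = 0$ for each $k$ and that $u_k \to u$ in $\mathcal{D}'$ (indeed in $\mathcal{S}'$ by \cref{Sconvergence}), so $\dv u_k \to \dv u$ distributionally. Weak time continuity (\cref{P:WeakTimeContinuityofu}) then upgrades this to $\dv u(t) = 0$ for every $t$.

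\textbf{Passing to the limit in the weak formulation.} Fix $\phi\in C_c^\iny([0,T]\times\R^2)$. Since $(\theta_k,u_k)$ is a classical solution of $(SQG_{1/k})$ with $\dv u_k = 0$, we multiply the equation by $\phi$ and integrate by parts in space and time. This gives
\begin{align*}
\int_0^T\!\!\int_{\R^2} \prt_t\phi\,\theta_k + \int_0^T\!\!\int_{\R^2} \grad\phi\cdot u_k\theta_k + \frac1k\int_0^T\!\!\int_{\R^2}\Delta\phi\,\theta_k
= \int_{\R^2}\phi(T)\theta_k(T) - \int_{\R^2}\phi(0)\theta_{k,0}.
\end{align*}
A technical point arises at $t=0$, since \cref{T:SQG1} gives regularity only on $(0,T]$. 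We handle it by integrating on $[\eps,T]$ and letting $\eps\to0$, using the uniform-in-$k$ Lipschitz bound of \cref{P:WeakTimeContinuityOfthetaSeq} so that $\int\phi(\eps)\theta_k(\eps)\to\int\phi(0)\theta_{k,0}$.

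We then let $k\to\infty$ term by term along the subsequence:
\begin{itemize}
\item The first term converges by weak-$\star$ convergence of $\theta_k$ in $L^\iny([0,T]\times\R^2)$.
\item The viscous term is bounded by $k^{-1}\|\Delta\phi\|_{L^1}\|\theta_0\|_{L^\iny}\to0$.
\item The nonlinear term converges to $\int_0^T\!\int\grad\phi\cdot u\theta$ by \cref{prop:nonlinear-term-convergence}.
\item For the endpoint $\int\phi(T)\theta_k(T)$, we apply \cref{L:WeakStarSectionalConvergence}, whose hypothesis is supplied by \cref{P:WeakTimeContinuityOfthetaSeq}. It gives $\theta_k(T)\wstar\theta(T)$ in $L^\iny(\R^2)$, with $\theta$ the time-continuous representative of \cref{corollarythetatimecont}.
\item For $t=0$, we have $\theta_{k,0}=S_k\theta_0\wstar\theta_0$, since $S_k\phi(0)\to\phi(0)$ in $L^1$ and $\int\phi(0)S_k\theta_0=\int (S_k\phi(0))\theta_0$ (the kernel $\chi_k$ is even).
\end{itemize}
Combining the limits yields \cref{e:WeakId}.

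\textbf{Main obstacle.} The nonlinear convergence is already handled by \cref{prop:nonlinear-term-convergence}. The delicate remaining point is consistency of representatives and subsequences. The modifications on null sets made in \cref{corollarythetatimecont}, \cref{constlaweverywhere}, and \cref{P:WeakTimeContinuityofu} must be compatible, and all limits must be taken along one common subsequence, so that $\theta(T)$ in \cref{e:WeakId} is the sectional weak-$\star$ limit and not just an a.e.-defined object. We address this by fixing the time-continuous representative of $\theta$ first, and then noting that the constitutive law and the weak identity are insensitive to further null-set changes of $u$.
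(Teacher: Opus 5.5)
Your proposal is correct and follows essentially the same route as the paper. Like the paper, you derive the weak identity for the viscous approximations $(\theta_k,u_k)$ and pass to the limit term by term, using weak-$\star$ convergence (as in \cref{timeconvergence}), \cref{prop:nonlinear-term-convergence}, \cref{L:WeakStarSectionalConvergence} with \cref{P:WeakTimeContinuityOfthetaSeq} for the endpoint at $T$, and $\mathcal{S}'$ convergence (\cref{Sconvergence}) for $\dv u = 0$. Your additions only tighten details the paper passes over: the $[\eps,T]$ argument at $t=0$, the duality justification of $S_k\theta_0 \wstar \theta_0$, and the use of \cref{constlaweverywhere} rather than \cref{constlaw}, which gives the constitutive law at every point as the definition requires.
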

\begin{proof}
First note that ($\theta_k, u_k$) satisfies
$$\partial_t\theta_k + u_k\cdot\nabla \theta_k = \frac{1}{k} \Delta \theta_k$$
pointwise on $[0,T]\times\R^2$, so it satisfies
\begin{equation}
\begin{split}
&\int_{0}^T\int_{\R^2} \partial_t \phi \theta_k \, dx \, dt   +\int_{0}^T\int_{\R^2} \nabla \phi \cdot u_k\theta_k \, dx \, dt = -\frac{1}{k}\int_{0}^T\int_{\R^2} \Delta \phi \theta_k \, dx \, dt\\
&\qquad\qquad + \int_{\R^2}  \phi(T)\theta_{k}(T)\, dx-\int_{\R^2}  \phi(0)\theta_{k,0}\, dx
\end{split}
\end{equation}
for all $\phi\in C_c^{\infty}([0,T]\times \R^2)$.  By \cref{timeconvergence},
$$ \int_{0}^T\int_{\R^2} \partial_t \phi \theta_k \, dx \, dt \longrightarrow \int_{0}^T\int_{\R^2} \partial_t \phi \theta \, dx \, dt.  $$
By \cref{prop:nonlinear-term-convergence}, 
$$\int_{0}^T\int_{\R^2} \nabla \phi \cdot u_k\theta_k \, dx \, dt \longrightarrow \int_{0}^T\int_{\R^2} \nabla \phi \cdot u\theta \, dx \, dt. $$  
By \cref{L:WeakStarSectionalConvergence} and \cref{P:WeakTimeContinuityOfthetaSeq},
$$ \int_{\R^2}  \phi(T)\theta_{k}(T)\, dx \longrightarrow \int_{\R^2} \phi(T)\theta(T) \, dx.$$
By weak-$\star$ convergence of $(\theta_{k,0})$ to $\theta_0$ in $L^{\infty}(\R^2)$,
$$ \int_{\R^2}  \phi(0)\theta_{k,0}\, dx \longrightarrow \int_{\R^2} \phi(0) \theta_{0}\, dx.$$
Finally, by \cref{max}, 
$$\frac{1}{k}\int_{0}^T\int_{\R^2} \Delta \phi \theta_k \, dx \, dt \leq \frac{C}{k}\|\theta_0\|_{L^{\infty}} \longrightarrow 0.$$
By \cref{constlaw}, ($\theta,u$) satisfies the homogeneous constitutive law in \cref{weaksolutiondef2}.  Since $(u_k)$ converges to $u$ in $\mathcal{S'}([0,T]\times \R^2)$ and div $u_k=0$ on $[0,T]\times\R^2$, it follows that div $u=0$ on $[0,T]\times\R^2$.  Thus, ($\theta,u$) satisfies \cref{weaksolutiondef2}.
\end{proof}

\begin{prop}\label{P:InviscidSerfatiID}
    Assume $(\theta,u)$ is as in \cref{prop:existence-of-limit-of-theta-u}.  Then $(\theta,u)$ satisfies the Serfati identity \cref{FourierSerfati2} with $\nu=0$ and $U_\iny \equiv 0$.
\end{prop}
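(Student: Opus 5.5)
The plan is to deduce the identity directly from the weak formulation and the homogeneous constitutive law satisfied by the limit $(\theta,u)$. These are available from \cref{prop:fourier-serfati-for-theta-u}, \cref{constlaweverywhere} and \cref{corollarythetatimecont}. The normalization $U_\iny\equiv 0$ then comes from \cref{nablasntozerolemma}. By the argument in \cref{constlawapp}, the homogeneous constitutive law gives
\begin{align*}
u(t)-u_0-(H_NK)*(\theta(t)-\theta_0)=S_N(u(t)-u_0).
\end{align*}
So it suffices to prove that, for every $N$ and $t$,
\begin{align*}
S_N(u(t)-u_0)(x)=-\int_0^t \bigl(S_N\grad K\stardot(\theta u)\bigr)(s,x)\,ds=:L_N(t,x).
\end{align*}

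\emph{Step 1: frequency-localized identity.} Fix $j\in\Z$ and $x\in\R^2$. By the constitutive law,
\begin{align*}
\Del_j S_N(u(t)-u_0)(x)=\int_{\R^2}(\Del_jS_NK)(x-y)(\theta(t,y)-\theta_0(y))\,dy.
\end{align*}
I would use the weak formulation in the time-sectional form of \cref{R:WeakerTestFunctions}, legitimate after the modification on a null set given by \cref{corollarythetatimecont}, with test function $y\mapsto(\Del_jS_NK)(x-y)$. This function is Schwartz but not compactly supported, so I first insert a cutoff $a_R(y)$ and let $R\to\iny$. The cutoff error consists of terms $\int_0^t\int \grad a_R\,(\Del_jS_NK)(x-\cdot)\cdot u\theta$ and the tail of $\grad(\Del_jS_NK)$. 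These vanish as $R\to\iny$ because $\theta\in L^\iny$, $u\in L^\iny(0,T;L^2_{ul})$, and the kernel decays faster than any polynomial (the annular decomposition is as in \cref{Sconvergence}). The result is
\begin{align*}
\Del_j S_N(u(t)-u_0)=\Del_j L_N(t)\qquad\text{for all } j.
\end{align*}

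\emph{Step 2: the difference is constant in $x$.} Consequently $S_N(u(t)-u_0)-L_N(t)$ is a polynomial in $x$, and I will show it is bounded. For $S_N(u-u_0)$ this is Young's inequality with $\chi_N\in\Cal S$ against $L^2_{ul}$ functions. For $L_N$, I use that $|S_N\grad K(y)|\lesssim_N(1+|y|)^{-3}$, because $\grad K$ is homogeneous of degree $-3$ away from the origin and $S_N$ smooths near it. Summing over unit cubes then gives $\|S_N\grad K\stardot f\|_{L^\iny}\le C_N\|f\|_{L^2_{ul}}$, applied with $f=\theta u$. Hence the difference equals some $c_N(t)\in\R^2$. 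Comparing two values $N_1<N_2$, the difference of the identities involves only the finitely many blocks $\Del_j$, $N_1\le j<N_2$. For each such block, Step 1 gives exactly $(\Del_jK)*(\theta(t)-\theta_0)=-\int_0^t(\Del_j\grad K)\stardot(\theta u)\,ds$, which is the computation used for independence of $N$ in the proof of \cref{theoremapriori}. Therefore $c_N(t)=c(t)$ does not depend on $N$.

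\emph{Step 3: identify $c\equiv0$.} Let $N\to-\iny$. By \cref{nablasntozerolemma}, $S_N(u(t)-u_0)\to0$ locally uniformly along a subsequence. For the other term, the argument of \cref{lowfreqwithN} (which uses only $\|\theta\|_{L^\iny}$ and $\|u\|_{L^2_{ul}}$) gives $\|\eta L_N(t)\|_{L^2}\le C2^N$ for compactly supported $\eta$. Hence $\eta c(t)=0$, so $c\equiv0$, giving \cref{FourierSerfati2} with $\nu=0$ and $U_\iny\equiv0$.

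The main obstacle I expect is Step 1: justifying the non-compactly supported test function in the weak formulation at every fixed $t$ rather than almost every $t$. This requires the weak time continuity of $\theta$ (\cref{corollarythetatimecont}) and of $u$ (\cref{P:WeakTimeContinuityofu}) to choose consistent representatives. If this proves delicate, the fallback is to pass $k\to\iny$ directly in \cref{FourierSerfati2} for $(\theta_k,u_k)$. That route needs the convergence of $S_N\grad K\stardot(\theta_ku_k)$, obtained by splitting the kernel into a compactly supported part, handled as in \cref{prop:nonlinear-term-convergence}, and a tail bounded uniformly in $k$ by \cref{L2ulestDSQG}.
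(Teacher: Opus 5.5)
Your argument is correct, and it differs from the paper's proof; your fallback route is in fact what the paper does.

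\textbf{The paper's route.} The paper does not use the weak formulation of the limit here. It writes the low-frequency part of \cref{FourierSerfati2} for $(\theta_k,u_k)$, with $U_\iny\equiv0$ by \cref{Uequals0}, and uses $S_Nu_{k,0}=S_Nu_0$ for large $k$ together with $S_Nu_k\to S_Nu$. It then passes $k\to\iny$ on the right-hand side. This requires pointwise convergence of $\int_0^t S_N\grad K\stardot(\theta_ku_k)\,ds$, which the paper obtains by extending \cref{prop:nonlinear-term-convergence} to the non-compactly supported kernel $\grad S_NK(x-\cdot)$ via the tail splitting of \cref{Sconvergence}, followed by dominated convergence. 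The normalization $U_\iny\equiv0$ is simply inherited from the approximations.

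\textbf{Your route.} You rerun the derivation in the proof of \cref{theoremapriori} at the level of the limit. You test the already established weak formulation and constitutive law against the Schwartz kernels $(\Del_jS_NK)(x-\cdot)$, so the cutoff errors are harmless. This gives the identity up to a bounded polynomial, that is, a constant $c(t)$ independent of $N$. Then \cref{nablasntozerolemma} and the $2^N$ bound from the argument of \cref{lowfreqwithN} force $c\equiv0$.

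\textbf{What each buys.} Your route yields a structural fact: every weak solution in the sense of \cref{weaksolutiondef2} satisfies a Serfati identity with some $N$-independent $c(t)$. This $c(t)$ is exactly the free $U_\iny$ of \cref{P:SQG0Nonunique} and \cref{R:InviscidSerfatiID}. The approximating sequence enters only through \cref{nablasntozerolemma}, which is where the choice $U_\iny\equiv0$ is recorded. The paper's route needs no identification of the constant and no appeal to \cref{nablasntozerolemma}. The price is a convergence argument for the nonlinear low-frequency term against a kernel that decays only polynomially, integrated over $[0,t]$.

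Your worry about every $t$ versus almost every $t$ is not essential. The paper itself concludes the identity only pointwise almost everywhere. In any case, the time-sectional weak form at every $t$ is available from \cref{corollarythetatimecont}.
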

\begin{proof}
Fix $N\in\Z$ and $t\in [0,T]$.  By the homogeneous constitutive law,
\begin{equation} \label{lowhigh}
\begin{split}
&(u(t) - u_0)(x) = S_N(u(t) - u_0)(x) + H_N(u(t) - u_0)(x)\\
&\qquad\qquad = S_N(u(t) - u_0)(x) + (H_NK)\ast (\theta(t) - \theta_0)(x).
\end{split}
\end{equation}
Let $(\theta_k,u_k)$ be as in \cref{prop:existence-of-limit-of-theta-u}.  Since $(\theta_k,u_k)$ satisfies \cref{FourierSerfati2}, we have
\begin{equation}\label{aprroxlowfreq}
S_N(u_k(t) - u_{k,0})(x) = \int_0^t ( \nabla S_{N}K\stardot (\theta_ku_k) - (1/k) \Delta S_N K \ast \theta_k  )(s,x)  \, ds.
\end{equation}
We take the limit in $\mathcal{D}'([0,T]\times\R^2)$ of each term in \cref{aprroxlowfreq}.

First, note that since $u_k\rightarrow u$ in $\mathcal{S}'([0,T]\times\R^2)$, 
\begin{equation}\label{pwlimit}
S_N u_k \rightarrow S_N u
\end{equation} in $\mathcal{D}'([0,T]\times\R^2)$.  Moreover, $S_Nu_{k,0} \rightarrow S_Nu_0$ in $\mathcal{D}'([0,T]\times\R^2)$, as $S_Nu_{k,0} = S_NS_ku_{0} = S_Nu_0 $ for $k$ sufficiently larger than $N$.

For the right hand side of \cref{aprroxlowfreq}, we will take the pointwise limit on $[0,T]\times\R^2$ and apply the Dominated Convergence Theorem. 

We first show that 
\begin{equation}\label{weaksolutionhardterm}
 \int_0^t  \nabla S_{N}K\stardot (\theta_ku_k)(s,x)  \, ds  \rightarrow  \int_0^t   \nabla S_{N}K\stardot(\theta u)(s,x) \, ds 
\end{equation}
pointwise on $[0,T]\times \R^2$ as $k\rightarrow\infty$.  Let $R>0$ and let $a_R$ be as in the proof of \cref{Sconvergence}.  Note that by, for example, (2.3) of \cite{Cozzi}, for all $\delta\in (0,1)$, there exists $C_{\delta}>0$ such that
$$ |\nabla S_N K(x)| \leq \frac{C_{\delta}}{(1+|x|)^{3-\delta}}.$$  Using this spatial decay, membership of $a_RS_NK$ to $C^{\infty}_c([0,T]\times\R^2)$, and \cref{prop:nonlinear-term-convergence}, we can apply an argument virtually identical to the proof of \cref{Sconvergence}, but with $a_R\rho$ replaced by $\nabla((a_R S_N K)(x-\cdot))$, $(1-a_R)\rho$ replaced by $\nabla(((1-a_R) S_N K)(x-\cdot))$, and $u_k$ and $u$ replaced by $\theta_ku_k$ and $\theta u$, respectively.  The convergence in \cref{weaksolutionhardterm} follows for each $(t,x)\in [0,T]\times\R^2$.     

For the dissipation term, observe that by Young's inequality and the uniform $L^{\infty}$ bound on $(\theta_k)$,
\begin{equation}\label{laplaceterm}
\abs{\int_0^t(1/k)\Delta S_{N}K\ast\theta_k(s,x) \, ds} \leq t(1/k)\|\Delta S_{N}K \|_{L^1} \|\theta_0\|_{L^{\infty}}   \rightarrow 0  
\end{equation}
as $k\rightarrow\infty$.

One can quickly check, again using the proof of \cref{Sconvergence} and \cref{laplaceterm}, that
\begin{equation*}
\begin{split}
&\left\| \int_0^t \nabla S_N K \stardot (\theta_k u_k) \, ds \right\|_{L^{\infty}([0,T]\times\R^2)} \leq C,\\
& \left\| \int_0^t(1/k)\Delta S_{N}K\ast\theta_k \, ds \right\|_{L^{\infty}([0,T]\times\R^2)} \leq C
\end{split}
\end{equation*}
uniformly in $k$,
allowing us to apply the Dominated Convergence Theorem and conclude that the right hand side of \cref{aprroxlowfreq} converges in $\mathcal{D}'([0,T]\times\R^2)$ to $\int_0^t  \nabla S_{N}K\stardot (\theta u) (s,x)  \, ds.$  Thus, 
$$S_N(u(t) - u_{0})(x) = \int_0^t  \nabla S_{N}K\stardot (\theta u) (s,x)  \, ds$$
pointwise almost everywhere.  Substituting this identity into \cref{lowhigh} gives \cref{FourierSerfati2} with $\nu=0$ and $U_\iny \equiv 0$.
\end{proof}

\begin{proof}[\textbf{Proof of \cref{T:MainResult}}]
    Follows from combining
    \cref{prop:fourier-serfati-for-theta-u}, \ref{P:InviscidSerfatiID}, and \ref{P:SQG0Nonunique}. 
\end{proof}

\begin{remark}\label{R:TimeContinuity}
    Time continuity of $\theta$ as in \cref{corollarythetatimecont} follows directly for our weak solutions from that corollary, but also directly from the alternate formulation of a weak solution noted in \cref{R:WeakerTestFunctions}. Time continuity of $u$ would then follow as in \cref{P:WeakTimeContinuityofu}.
\end{remark}

\begin{remark}\label{R:InviscidSerfatiID}
    Because we chose to set $U_\iny \equiv 0$ in \cref{Uequals0}, the Serfati identity we obtained in \cref{P:InviscidSerfatiID} and hence in \cref{T:MainResult} has $U_\iny \equiv 0$. Had we chosen another continuous value of $U_\iny$ in  \cref{Uequals0}---the same value for each $(\theta_k, u_k)$---that $U_\iny$ would be added to the right side of the Serfati identity in \cref{e:SerfatiWeak}. The continuity of $U_\iny$ is required to obtain the weak continuity in time of the velocity, as given in \cref{P:WeakTimeContinuityofu}.
\end{remark}

\section*{Acknowledgments} DMA is grateful to the National Science Foundation
for support through grant DMS-2307638. EC is grateful to the Simons Foundation
for support through grant 429578.  

\section*{Data Availability} No datasets were generated or analyzed during this study.

\bibliography{Refs.bib}

@book{dragomir2003some,
    AUTHOR = {Dragomir, S.S.},
     TITLE = {Some {G}ronwall type inequalities and applications},
 PUBLISHER = {Nova Science Publishers, Inc., Hauppauge, NY},
      YEAR = {2003},
     PAGES = {viii+193},
      ISBN = {1-59033-827-8},
   MRCLASS = {34-01 (34A40 34C11 34D20 45-02)},
  MRNUMBER = {2016992},
MRREVIEWER = {B.\ G.\ Pachpatte},
}

@unpublished{AACK25,
  title={Smooth non-decaying solutions to the 2{D} dissipative quasi-geostrophic equations},
  author={Ambrose, D.M. and Aschoff, R. and Cozzi, E. and Kelliher, J.P.},
  year={2025},
  note={Preprint.  arXiv:2508.10254},
}

@article{ACEK,
    AUTHOR = {Ambrose, D.M. and Cozzi, E. and Erickson, D. and
              Kelliher, J.P.},
     TITLE = {Existence of solutions to fluid equations in {H}\"older and
              uniformly local {S}obolev spaces},
   JOURNAL = {J. Differential Equations},
  FJOURNAL = {Journal of Differential Equations},
    VOLUME = {364},
      YEAR = {2023},
     PAGES = {107--151},
      ISSN = {0022-0396,1090-2732},
   MRCLASS = {76B03 (35A01 35Q31 35Q35)},
  MRNUMBER = {4568776},
MRREVIEWER = {Da-Wen\ Deng},
       DOI = {10.1016/j.jde.2023.03.019},
       URL = {https://doi-org.ezproxy2.library.drexel.edu/10.1016/j.jde.2023.03.019},
}

@book{Chemin1,
    AUTHOR = {Chemin, J.-Y.},
     TITLE = {Perfect incompressible fluids},
    SERIES = {Oxford Lecture Series in Mathematics and its Applications},
    VOLUME = {14},
      NOTE = {Translated from the 1995 French original by Isabelle Gallagher
              and Dragos Iftimie},
 PUBLISHER = {The Clarendon Press, Oxford University Press, New York},
      YEAR = {1998},
     PAGES = {x+187},
      ISBN = {0-19-850397-0},
   MRCLASS = {76B47 (35-02 35Q35 76-02)},
  MRNUMBER = {1688875},
}

@article{Taniuchi,
    AUTHOR = {Taniuchi, Y.},
     TITLE = {Uniformly local {$L^p$} estimate for 2-{D} vorticity equation
              and its application to {E}uler equations with initial
              vorticity in {${\bf bmo}$}},
   JOURNAL = {Comm. Math. Phys.},
  FJOURNAL = {Communications in Mathematical Physics},
    VOLUME = {248},
      YEAR = {2004},
    NUMBER = {1},
     PAGES = {169--186},
      ISSN = {0010-3616,1432-0916},
   MRCLASS = {76B03 (35Q35)},
  MRNUMBER = {2104609},
MRREVIEWER = {Rapha\"el\ Danchin},
       DOI = {10.1007/s00220-004-1095-6},
       URL = {https://doi-org.ezproxy2.library.drexel.edu/10.1007/s00220-004-1095-6},
}

@article{Cozzi,
  author    = {E. Cozzi},
  title     = {Solutions to the 2D Euler equations with velocity unbounded at infinity},
  journal   = {Journal of Mathematical Analysis and Applications},
  volume    = {423},
  number    = {1},
  pages     = {144--161},
  year      = {2015}
}

@article{cordoba2004maximum,
    AUTHOR = {C\'ordoba, A. and C\'ordoba, D.},
     TITLE = {A maximum principle applied to quasi-geostrophic equations},
   JOURNAL = {Comm. Math. Phys.},
  FJOURNAL = {Communications in Mathematical Physics},
    VOLUME = {249},
      YEAR = {2004},
    NUMBER = {3},
     PAGES = {511--528},
      ISSN = {0010-3616,1432-0916},
   MRCLASS = {76B03 (26A33 35Q35 76U05 86A05)},
  MRNUMBER = {2084005},
MRREVIEWER = {Luigi\ Carlo\ Berselli},
       DOI = {10.1007/s00220-004-1055-1},
       URL = {https://doi-org.ezproxy2.library.drexel.edu/10.1007/s00220-004-1055-1},
}

@article {KBounded,
    AUTHOR = {Kelliher, J.P.},
     TITLE = {A characterization at infinity of bounded vorticity, bounded
              velocity solutions to the 2{D} {E}uler equations},
   JOURNAL = {Indiana Univ. Math. J.},
  FJOURNAL = {Indiana University Mathematics Journal},
    VOLUME = {64},
      YEAR = {2015},
    NUMBER = {6},
     PAGES = {1643--1666},
      ISSN = {0022-2518,1943-5258},
   MRCLASS = {76B47},
  MRNUMBER = {3436230},
MRREVIEWER = {Rapha\"el\ Danchin},
       DOI = {10.1512/iumj.2015.64.5717},
       URL = {https://doi-org.ezproxy2.library.drexel.edu/10.1512/iumj.2015.64.5717},
}

@article {KukavicaBoundedNS,
    AUTHOR = {Kukavica, I.},
     TITLE = {On local uniqueness of weak solutions of the {N}avier-{S}tokes
              system with bounded initial data},
   JOURNAL = {J. Differential Equations},
  FJOURNAL = {Journal of Differential Equations},
    VOLUME = {194},
      YEAR = {2003},
    NUMBER = {1},
     PAGES = {39--50},
      ISSN = {0022-0396,1090-2732},
   MRCLASS = {35Q30 (76D03 76D05)},
  MRNUMBER = {2001028},
MRREVIEWER = {J\"urgen\ Socolowsky},
       DOI = {10.1016/S0022-0396(03)00153-0},
       URL = {https://doi-org.ezproxy2.library.drexel.edu/10.1016/S0022-0396(03)00153-0},
}

@book {FMRT,
    AUTHOR = {Foias, C. and Manley, O. and Rosa, R. and Temam, R.},
     TITLE = {Navier-{S}tokes equations and turbulence},
    SERIES = {Encyclopedia of Mathematics and its Applications},
    VOLUME = {83},
 PUBLISHER = {Cambridge University Press, Cambridge},
      YEAR = {2001},
     PAGES = {xiv+347},
      ISBN = {0-521-36032-3},
   MRCLASS = {76-02 (35Q30 37L30 37N10 76D05 76D06 76F05 76F20)},
  MRNUMBER = {1855030},
MRREVIEWER = {Xiaoming\ Wang},
       DOI = {10.1017/CBO9780511546754},
       URL = {https://doi-org.ezproxy2.library.drexel.edu/10.1017/CBO9780511546754},
}

@article {Berselli,
    AUTHOR = {Berselli, L.C.},
     TITLE = {Vanishing viscosity limit and long-time behavior for 2{D}
              quasi-geostrophic equations},
   JOURNAL = {Indiana Univ. Math. J.},
  FJOURNAL = {Indiana University Mathematics Journal},
    VOLUME = {51},
      YEAR = {2002},
    NUMBER = {4},
     PAGES = {905--930},
      ISSN = {0022-2518,1943-5258},
   MRCLASS = {35Q35 (35B40 35B41 37L30 76U05)},
  MRNUMBER = {1947863},
       DOI = {10.1512/iumj.2002.51.2075},
       URL = {https://doi-org.ezproxy2.library.drexel.edu/10.1512/iumj.2002.51.2075},
}

@article {Marchand,
    AUTHOR = {Marchand, F.},
     TITLE = {Existence and regularity of weak solutions to the
              quasi-geostrophic equations in the spaces {$L^p$} or {$\dot
              H^{-1/2}$}},
   JOURNAL = {Comm. Math. Phys.},
  FJOURNAL = {Communications in Mathematical Physics},
    VOLUME = {277},
      YEAR = {2008},
    NUMBER = {1},
     PAGES = {45--67},
      ISSN = {0010-3616,1432-0916},
   MRCLASS = {76B03 (35D05 35D10 35Q35 76B60 76D03 76U05)},
  MRNUMBER = {2357424},
MRREVIEWER = {Luigi\ Carlo\ Berselli},
       DOI = {10.1007/s00220-007-0356-6},
       URL = {https://doi-org.ezproxy2.library.drexel.edu/10.1007/s00220-007-0356-6},
}

@article {Serfati,
    AUTHOR = {Serfati, P.},
     TITLE = {Solutions {$C^\infty$} en temps, {$n$}-{$\log$} {L}ipschitz
              born\'ees en espace et \'equation d'{E}uler},
   JOURNAL = {C. R. Acad. Sci. Paris S\'er. I Math.},
  FJOURNAL = {Comptes Rendus de l'Acad\'emie des Sciences. S\'erie I.
              Math\'ematique},
    VOLUME = {320},
      YEAR = {1995},
    NUMBER = {5},
     PAGES = {555--558},
      ISSN = {0764-4442},
   MRCLASS = {35Q30 (76C99)},
  MRNUMBER = {1322336},
MRREVIEWER = {Rodolfo\ Salvi},
}

@article {AKLN2015,
    AUTHOR = {Ambrose, D.M. and Kelliher, J.P. and Lopes Filho,
              M.C. and Nussenzveig Lopes, H.J.},
     TITLE = {Serfati solutions to the 2{D} {E}uler equations on exterior
              domains},
   JOURNAL = {J. Differential Equations},
  FJOURNAL = {Journal of Differential Equations},
    VOLUME = {259},
      YEAR = {2015},
    NUMBER = {9},
     PAGES = {4509--4560},
      ISSN = {0022-0396,1090-2732},
   MRCLASS = {35Q31 (76B03)},
  MRNUMBER = {3373413},
MRREVIEWER = {Christophe\ Lacave},
       DOI = {10.1016/j.jde.2015.06.001},
       URL = {https://doi-org.ezproxy2.library.drexel.edu/10.1016/j.jde.2015.06.001},
}

@article {Wu1997,
    AUTHOR = {Wu, J.},
     TITLE = {Inviscid limits and regularity estimates for the solutions of
              the {$2$}-{D} dissipative quasi-geostrophic equations},
   JOURNAL = {Indiana Univ. Math. J.},
  FJOURNAL = {Indiana University Mathematics Journal},
    VOLUME = {46},
      YEAR = {1997},
    NUMBER = {4},
     PAGES = {1113--1124},
      ISSN = {0022-2518,1943-5258},
   MRCLASS = {35Q35 (76C99 76F99 76U05)},
  MRNUMBER = {1631560},
       DOI = {10.1512/iumj.1997.46.1275},
       URL = {https://doi-org.ezproxy2.library.drexel.edu/10.1512/iumj.1997.46.1275},
}

@article {CozziKelliher2019,
    AUTHOR = {Cozzi, E. and Kelliher, J.P.},
     TITLE = {Well-posedness of the 2{D} {E}uler equations when velocity
              grows at infinity},
   JOURNAL = {Discrete Contin. Dyn. Syst.},
  FJOURNAL = {Discrete and Continuous Dynamical Systems},
    VOLUME = {39},
      YEAR = {2019},
    NUMBER = {5},
     PAGES = {2361--2392},
      ISSN = {1078-0947,1553-5231},
   MRCLASS = {35Q31 (35B30 76B03)},
  MRNUMBER = {3927517},
MRREVIEWER = {Francesca\ Brini},
       DOI = {10.3934/dcds.2019100},
       URL = {https://doi-org.ezproxy2.library.drexel.edu/10.3934/dcds.2019100},
}

@book {Resnick,
    AUTHOR = {Resnick, S.G.},
     TITLE = {Dynamical problems in non-linear advective partial
              differential equations},
      NOTE = {Thesis (Ph.D.)--The University of Chicago},
 PUBLISHER = {ProQuest LLC, Ann Arbor, MI},
      YEAR = {1995},
     PAGES = {76},
   MRCLASS = {99-05},
  MRNUMBER = {2716577},
       URL =
              {http://gateway.proquest.com.ezproxy2.library.drexel.edu/openurl?url_ver=Z39.88-2004&rft_val_fmt=info:ofi/fmt:kev:mtx:dissertation&res_dat=xri:pqdiss&rft_dat=xri:pqdiss:9542767},
}

@article {ConstantinMajdaTabak1994A,
    AUTHOR = {Constantin, P. and Majda, A.J. and Tabak, E.G.},
     TITLE = {Singular front formation in a model for quasigeostrophic flow},
   JOURNAL = {Phys. Fluids},
  FJOURNAL = {Physics of Fluids},
    VOLUME = {6},
      YEAR = {1994},
    NUMBER = {1},
     PAGES = {9--11},
      ISSN = {1070-6631,1089-7666},
   MRCLASS = {76C99 (76C15 76M25 86A10)},
  MRNUMBER = {1252829},
       DOI = {10.1063/1.868050},
       URL = {https://doi-org.ezproxy2.library.drexel.edu/10.1063/1.868050},
}

@article {ConstantinMajdaTabak1994B,
    AUTHOR = {Constantin, P. and Majda, A.J. and Tabak, E.},
     TITLE = {Formation of strong fronts in the {$2$}-{D} quasigeostrophic
              thermal active scalar},
   JOURNAL = {Nonlinearity},
  FJOURNAL = {Nonlinearity},
    VOLUME = {7},
      YEAR = {1994},
    NUMBER = {6},
     PAGES = {1495--1533},
      ISSN = {0951-7715,1361-6544},
   MRCLASS = {76U05 (35Q35 76C15 76M25 86A10)},
  MRNUMBER = {1304437},
MRREVIEWER = {J.\ Thomas\ Beale},
       DOI = {10.1088/0951-7715/7/6/001},
       URL = {https://doi-org.ezproxy2.library.drexel.edu/10.1088/0951-7715/7/6/001},
}

@article {LuigiLatocca2026,
    AUTHOR = {De Rosa, L. and Latocca, M. and Park, J.},
     TITLE = {Global {E}xistence, {H}amiltonian {C}onservation and
              {V}anishing {V}iscosity for the {S}urface
              {Q}uasi-{G}eostrophic {E}quation},
   JOURNAL = {Arch. Ration. Mech. Anal.},
  FJOURNAL = {Archive for Rational Mechanics and Analysis},
    VOLUME = {250},
      YEAR = {2026},
    NUMBER = {5},
     PAGES = {Paper No. 74},
      ISSN = {0003-9527,1432-0673},
   MRCLASS = {35Q35 (35D30 35Q86 76B03)},
  MRNUMBER = {5111259},
       DOI = {10.1007/s00205-026-02231-2},
       URL = {https://doi-org.ezproxy2.library.drexel.edu/10.1007/s00205-026-02231-2},
}

@article {Ambrose2025,
    AUTHOR = {Ambrose, D.M.},
     TITLE = {The velocity field and {B}irkhoff-{R}ott integral for
              nondecaying, nonperiodic vortex sheets},
   JOURNAL = {SIAM J. Appl. Math.},
  FJOURNAL = {SIAM Journal on Applied Mathematics},
    VOLUME = {85},
      YEAR = {2025},
    NUMBER = {2},
     PAGES = {456--476},
      ISSN = {0036-1399,1095-712X},
   MRCLASS = {76B55 (42A50 76B07)},
  MRNUMBER = {4875664},
       DOI = {10.1137/24M164848X},
       URL = {https://doi-org.ezproxy2.library.drexel.edu/10.1137/24M164848X},
}

@article {Ciampa2022,
    AUTHOR = {Ciampa, G.},
     TITLE = {Energy conservation for 2{D} {E}uler with vorticity in
              {$L(\log L)^\alpha$}},
   JOURNAL = {Commun. Math. Sci.},
  FJOURNAL = {Communications in Mathematical Sciences},
    VOLUME = {20},
      YEAR = {2022},
    NUMBER = {3},
     PAGES = {855--875},
      ISSN = {1539-6746,1945-0796},
   MRCLASS = {35Q31},
  MRNUMBER = {4401505},
       DOI = {10.4310/CMS.2022.v20.n3.a10},
       URL = {https://doi-org.ezproxy2.library.drexel.edu/10.4310/CMS.2022.v20.n3.a10},
}

@article {CiampaCrippaSpirito2021,
    AUTHOR = {Ciampa, G. and Crippa, G. and Spirito, S.},
     TITLE = {Strong convergence of the vorticity for the 2{D} {E}uler
              equations in the inviscid limit},
   JOURNAL = {Arch. Ration. Mech. Anal.},
  FJOURNAL = {Archive for Rational Mechanics and Analysis},
    VOLUME = {240},
      YEAR = {2021},
    NUMBER = {1},
     PAGES = {295--326},
      ISSN = {0003-9527,1432-0673},
   MRCLASS = {76B47 (35Q31)},
  MRNUMBER = {4228862},
MRREVIEWER = {Tomasz\ Cie\'slak},
       DOI = {10.1007/s00205-021-01612-z},
       URL = {https://doi-org.ezproxy2.library.drexel.edu/10.1007/s00205-021-01612-z},
}
\bibliographystyle{plain}

\end{document}